\numberwithin{equation}{section}
\theoremstyle{definition}
 \newtheorem{thm}{Theorem}[section]
 \newtheorem{prp}[thm]{Proposition}
 \newtheorem{lem}[thm]{Lemma}
 \newtheorem{dfn}[thm]{Definition}
 \newtheorem{fct}[thm]{Fact}
 \newtheorem{rmk}[thm]{Remark}
 \newtheorem*{ack}{Acknowledgements}
\newcommand{\ep}{\epsilon}
\newcommand{\ve}{\varepsilon}
\newcommand{\bbC}{\mathbb{C}}
\newcommand{\bbF}{\mathbb{F}}
\newcommand{\bbN}{\mathbb{N}}
\newcommand{\bbP}{\mathbb{P}}
\newcommand{\bbQ}{\mathbb{Q}}
\newcommand{\bbZ}{\mathbb{Z}}
\newcommand{\calA}{\mathcal{A}}
\newcommand{\calF}{\mathcal{F}}
\newcommand{\calU}{\mathcal{U}}
\newcommand{\calW}{\mathcal{W}}
\newcommand{\frakh}{\mathfrak{h}}
\newcommand{\frakS}{\mathfrak{S}}
\newcommand{\Liesl}{\mathfrak{sl}}
\newcommand{\seteq}{\mathbin{:=}}
\newcommand{\ad}{\operatorname{ad}}
\newcommand{\Ind}{\operatorname{Ind}}
\newcommand{\Sym}{\operatorname{Sym}}
\newcommand{\SSTb}[1]{\operatorname{SSTb}(#1)}
\newcommand{\Tb}[1]{\operatorname{Tb}(#1)}
\newcommand{\Tbr}[1]{\operatorname{RTb}(#1)}
\newcommand{\qtpr}[1]{\left\langle #1\right\rangle_{q,t}}
\title{Kernel function and quantum algebras}
\date{Feburary 12, 2010}
\author{B.~Feigin, A.~Hoshino, J.~Shibahara, J.~Shiraishi and S.~Yanagida}
\address{BF: Landau Institute for Theoretical Physics,
Russia, Chernogolovka, 142432, prosp. Akademika Semenova, 1a,   \newline
Higher School of Economics, Russia, Moscow, 101000,  Myasnitskaya ul., 20 and
\newline
Independent University of Moscow, Russia, Moscow, 119002,
Bol'shoi Vlas'evski per., 11}
\email{bfeigin@gmail.com}
\address{AH. Department of Mathematics, Sophia University, Kioicyo, Tokyo, 102-8554, Japan}
\email{ayumu-h@mm.sophia.ac.jp}
\address{JS, JS: Graduate School of Mathematical Sciences, University of Tokyo, Komaba, Tokyo
153-8914, Japan}
\email{shibahara@ms.u-tokyo.ac.jp}
\email{shiraish@ms.u-tokyo.ac.jp}
\address{SY: Kobe University, Department of Mathematics, Rokko, Kobe 657-8501, Japan}
\email{yanagida@math.kobe-u.ac.jp}
\begin{document}

\begin{abstract}
We introduce an analogue $K_n(x,z;q,t)$ of the 
Cauchy-type kernel function for the Macdonald polynomials,
being constructed 
in the tensor product of the ring $\Lambda_\bbF$ of symmetric functions and the 
commutative algebra $\calA$ over the degenerate $\bbC \bbP^1$. 
We show that a certain restriction of $K_n(x,z;q,t)$  with respect to the variable $z$ 
is neatly described by the tableau sum formula of Macdonald polynomials. 
Next, we demonstrate that the level $m$ representation of 
the Ding-Iohara quantum algebra $\calU(q,t)$ 
naturally produces the currents of the deformed $\calW_{q,p}(\Liesl_n)$. 
Then we remark that the $K_n(x,z;q,t)$ 
emerges in the highest-to-highest correlation function of the deformed $\calW_{q,p}(\Liesl_n)$ algebra.
\end{abstract}

\maketitle
%{\footnotesize \tableofcontents}

%%%%%%%%%%%%%%%%%%%%%%%%%%%%%%%%%%%%%%%%%%%%%%%%%%
%%%%%%%%%%%%%%%%%%%%%%%%%%%%%%%%%%%%%%%%%%%%%%%%%%

\section{Kernel function}

\subsection{The algebra $\calA$}
We briefly recall the definition and the basic facts 
about the commutative algebra $\calA$ introduced in \cite{FHHSY:2009}.
Let $q_1,q_2$ be two independent indeterminates 
and set $q_3\seteq 1/q_1q_2$. 
%throughout the paper. 
We also use the symbols $\bbF\seteq \bbQ(q_1,q_2)$, 
$\bbN\seteq\{0,1,2,\ldots\}$ and $\bbN_+\seteq\{1,2,\ldots\}$.
%We denote by $\bbF\seteq \bbQ(q_1,q_2)$ the field of rational functions in $q_1,q_2$.
%We denote the set of non-negative integers by $\bbN\seteq\{0,1,2,\ldots\}$ and the set of positive integers by $\bbN_+\seteq\{1,2,\ldots\}$.

For $n,k\in\bbN_+$, we define two operators $\partial^{(0,k)},\partial^{(\infty,k)}$ acting on the space of symmetric rational functions in $n$ variables $x_1,\ldots,x_n$ by
\begin{align*}
\begin{array}{l l l l l l l l l}
\partial^{(0,k)}&: &f &\mapsto 
 &\displaystyle 
   \dfrac{n!}{(n-k)!} \lim_{\xi \to 0} 
   f(x_1,\ldots,x_{n-k},\xi x_{n-k+1},\xi x_{n-k+2},\ldots,\xi x_n)
\\
\partial^{(\infty,k)}&: &f &\mapsto 
 &\displaystyle 
    \dfrac{n!}{(n-k)!} \lim_{\xi \to \infty}
    f(x_1,\ldots,x_{n-k},\xi x_{n-k+1},\xi x_{n-k+2},\ldots,\xi x_n)
\end{array}
\end{align*}
whenever the limit exists. We also set $\partial^{(0,k)} c=0, \partial^{(\infty,k)} c=0$ for $c\in \bbF$. Finally we define $\partial^{(0,0)}$ and $\partial^{(\infty,0)}$ to be the identity operator.

\begin{dfn}%[Space $\calA$] \label{dfn:calA}
For $n\in\bbN$, the vector space $\calA_n=\calA_n(q_1,q_2,q_3)$ is defined by the following conditions (i), (ii), (iii) and (iv).
\\
(i)
$\calA_0 \seteq \bbF$. For $n\in\bbN_+$, $f(x_1,\ldots,x_n)\in \calA_n$ is a rational function with coefficients in $\bbF$, and symmetric with respect to the $x_i$'s.
\\
(ii) For $n\in\bbN$, $0\leq k\leq n$ and $f \in \calA_n$, 
the limits $\partial^{(\infty,k)}f$ and $\partial^{(0,k)}f$ both exist and coincide: $\partial^{(\infty,k)}f=\partial^{(0,k)}f$ (degenerate $\bbC \bbP^1$ condition).
\\
(iii)
The poles of $f\in \calA_n$ are located only on the diagonal $\{(x_1,\ldots,x_n) \mid \exists (i,j), i\neq j ,x_i=x_j\}$, and the orders of the poles are at most two.
\\
(iv) For $n\geq 3$, $f\in \calA_n$ satisfies the wheel conditions
\begin{align*}
%\label{eq:wheel_condition}
f( x_1,q_1 x_1,q_1 q_2 x_1,x_4,\ldots)=0,\qquad
f( x_1,q_2 x_1,q_1 q_2 x_1,x_4,\ldots)=0.
\end{align*}

Then we set the graded vector space 
$\calA=\calA(q_1,q_2,q_3)\seteq\bigoplus_{n\geq 0}\calA_n$.
\end{dfn}

%Introduce a bilinear operation $*$ called the {\it star product}.

\begin{dfn} %[Star product $*$] \label{dfn:star_prod}
For an $m$-variable symmetric rational function $f$ and an $n$-variable symmetric 
rational function $g$, we define an $(m+n)$-variable symmetric rational function $f*g$ by 
\begin{align}\label{eq:star}
 (f*g)(x_1,\ldots,x_{m+n})&\seteq
  \Sym
  \bigg[
   f(x_1,\ldots,x_m) g(x_{m+1},\ldots,x_{m+n})
   \prod_{\substack{1\le\alpha\le m\\m+1\le\beta\le m+n}}
   \omega(x_\alpha,x_\beta)
  \bigg].
\end{align}
Here $\omega(x,y)$ is the rational function
\begin{align}\label{eq:omega}
\omega(x,y)=\omega(x,y;q_1,q_2,q_3)
\seteq\dfrac{(x-q_1 y)(x - q_2 y)(x-q_3 y)}{(x-y)^3},
\end{align}
and the symbol $\Sym$ means
$\Sym (f(x_1,\ldots,x_n))\seteq
(1/n!) \;\sum_{\sigma\in\frakS_{n}}
f(x_{\sigma(1)},\ldots,x_{\sigma(n)})$.
\end{dfn}

\begin{fct}[{\cite[Theorem 1.5]{FHHSY:2009}}]\label{thm:1}
$\calA$ is closed with respect to $*$, and the pair $(\calA,*)$ is a unital associative commutative algebra. The Poincar\'e series is $\sum_{n\ge 0} (\dim_\bbF \calA_n) z^n=\prod_{m\ge 1}(1-z^m)^{-1}$.
\end{fct}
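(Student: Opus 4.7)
The plan is to establish (a) closure of $\calA$ under $*$, (b) the unit, commutative, and associative laws, and (c) the Poincar\'e series via an explicit basis of $\calA_n$ labelled by partitions.

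For closure, given $f\in\calA_m$ and $g\in\calA_n$, I verify (i)--(iv) for $f*g$. Symmetry (i) is immediate from the presence of $\Sym$. For the pole bound (iii), each factor $\omega(x_\alpha,x_\beta)$ has a cubic pole at $x_\alpha=x_\beta$, but a direct computation yields
\[
\omega(x,y)+\omega(y,x)=\frac{2x^2+(2-e_1-e_2)xy+2y^2}{(x-y)^2},
\]
where $e_i=e_i(q_1,q_2,q_3)$, so the apparent cubic pole cancels under the transposition $x\leftrightarrow y$. Since $\Sym$ contains every such transposition, $f*g$ has at most double poles on the diagonals. For the wheel condition (iv), substituting $(x_1,x_2,x_3)=(x,q_1 x,q_1q_2 x)$ into a summand of $\Sym$ gives zero: either two of $\{1,2,3\}$ lie on the same side of the $m/n$ bipartition, in which case the wheel condition on $f$ or $g$ applies, or in the mixed cases a cross-factor $\omega(q_i x,q_iq_j x)$ vanishes. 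For the degenerate $\bbC\bbP^1$ condition (ii), the identities $\omega(x,0)=1=\lim_{y\to\infty}\omega(x,y)$ imply that $\partial^{(0,k)}$ and $\partial^{(\infty,k)}$ commute with $\Sym$ and act separately on the $f$- and $g$-factors, where the condition holds by hypothesis.

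For the algebra axioms, the unit is $1\in\calA_0=\bbF$, and associativity follows by a direct expansion: both $(f*g)*h$ and $f*(g*h)$ are symmetrizations of $f\cdot g\cdot h$ with an $\omega$-factor on every mixed pair, producing the same sum over ordered tripartitions of the variables. Commutativity is the most delicate step. Rewriting
\[
(f*g)(x)=\frac{m!\,n!}{(m+n)!}\sum_{\substack{S\subset\{1,\ldots,m+n\}\\|S|=m}}f(x_S)\,g(x_{S^c})\prod_{\alpha\in S,\,\beta\in S^c}\omega(x_\alpha,x_\beta),
\]
the analogous sum for $(g*f)$ reverses each $\omega$-factor after $S\leftrightarrow S^c$, and the difference $f*g-g*f$ is a symmetric rational function whose vanishing must be extracted from the combined pole and wheel constraints (iii)--(iv); the point is that the obstruction to commutativity lives in a subspace which those constraints force to be trivial.

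Finally, the Poincar\'e series is obtained by exhibiting a basis of $\calA_n$ indexed by partitions $\lambda\vdash n$. A natural construction uses generators $E_k\in\calA_k$ (one for each $k\ge 1$) together with shuffle products $E_\lambda=E_{\lambda_1}*\cdots*E_{\lambda_\ell}$; linear independence is checked by pairing against functionals built from iterated $\partial^{(0,\cdot)}$, and spanning by an inductive normal-form reduction using (ii)--(iv). One may alternatively exhibit an $\bbF$-algebra isomorphism $\calA\xrightarrow{\sim}\Lambda_\bbF$ to the ring of symmetric functions, whose Poincar\'e series $\prod_{m\ge 1}(1-z^m)^{-1}$ is classical. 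I expect the principal obstacles to be the pole reduction in (iii), which requires careful bookkeeping of Laurent expansions after $\Sym$, and the verification of commutativity, which cannot be done term-by-term because $\omega$ is itself asymmetric.
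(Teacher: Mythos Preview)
This statement is recorded in the paper as a \emph{Fact} cited from \cite{FHHSY:2009}; the present paper gives no proof of its own, so there is nothing here to compare your argument against directly.  I will therefore only comment on the soundness of your sketch.

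Your treatment of closure is essentially correct.  The identity
\[
\omega(x,y)+\omega(y,x)=\dfrac{2x^2+(2-e_1-e_2)xy+2y^2}{(x-y)^2}
\]
(using $q_1q_2q_3=1$) is right and does kill the cubic pole after pairing summands under the transposition of the two variables; your wheel--condition case analysis is also correct once one checks that in every mixed distribution of $\{1,2,3\}$ across the two blocks at least one cross factor $\omega(x_a,x_b)$ has $x_a/x_b\in\{q_1,q_2,q_3\}$ and hence vanishes.  The unit and associativity are formal for any shuffle product of this shape.

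The genuine gap is commutativity.  Saying that the obstruction ``lives in a subspace which those constraints force to be trivial'' is not an argument: by your own closure proof, $f*g-g*f$ lies in $\calA_{m+n}$, and $\calA_{m+n}$ is not zero, so conditions (iii)--(iv) alone cannot be what kills it.  Commutativity of $(\calA,*)$ is a special feature of this particular $\omega$ (it uses $q_1q_2q_3=1$ in an essential way; generic shuffle kernels give noncommutative algebras), and it is not obtained by a vanishing-subspace argument of the type you describe.  You need either a direct identity matching the two symmetrizations, or an indirect route such as checking commutativity on an explicit generating set together with a spanning result.

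The Poincar\'e series step is also incomplete.  The linear--independence half via specialization functionals is the right idea (this is exactly the role of the maps $\varphi_\lambda^{(p)}$ and the Gordon filtration recalled later in the paper), but ``spanning by an inductive normal-form reduction using (ii)--(iv)'' is a placeholder, not a proof: you have not said what the normal form is or why the constraints let you reach it.  Your alternative, an isomorphism $\calA\simeq\Lambda_\bbF$, is precisely what the present paper \emph{builds} on top of this Fact, so invoking it here would be circular.
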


\subsection{The ring $\Lambda_{\bbF}$ of symmetric functions}
As for the notations and definitions concerning the partitions, 
we basically follow the notation in \cite{M:1995:book}. A partition of $n\in \bbN$ is a sequence  $\lambda=(\lambda_1,\lambda_2,\ldots)$ of non-negative integers satisfying $\lambda_1\geq\lambda_2\geq\cdots$. We define $|\lambda|\seteq\lambda_1+\lambda_2+\cdots$, $\ell(\lambda)\seteq\#\{i\mid\lambda_i\neq0\}$, and write $\lambda \vdash n$ if $|\lambda|=n$. We denote the conjugate (transpose) of a partition $\lambda$ by $\lambda'$. We work with the dominance partial ordering defined as :
$\lambda\geq \mu \overset{\rm def}{\iff}
|\lambda|=|\mu|,\  \lambda_1+\cdots+\lambda_i\geq \mu_1+\cdots+\mu_i
\mbox{ for all } i\geq 1$.

We recall some basic facts about the ring of symmetric functions. 
As was in \cite{FHHSY:2009}, we 
set $q_1=q^{-1},q_2=t$ (hence $q_3=q t^{-1}$) and
$\bbF=\bbQ(q_1,q_2)=\bbQ(q,t)$.
Let $\Lambda_\bbF$ be the ring of symmetric functions 
over the base field $\bbF$, constructed in the category of graded 
ring with the projection operators $\rho_{m,n}:f(x_1,\ldots,x_m)\mapsto
f(x_1,\ldots,x_n,0\ldots,0)$.

Let $p_n(x)\seteq \sum_i x_i^n$ be the power sum function. 
For a partition $\lambda=(\lambda_1,\lambda_2,\ldots)$,
the monomial symmetric function 
is defined by 
$m_\lambda(x)\seteq \sum_{\alpha}x^\alpha$,
where $\alpha$ runs over all the distinct permutations of $\lambda$.
The elementary symmetric function $e_n(x)$ is defined by
the generating function
$E(y)\seteq\prod_{i}(1+x_i y)=\sum_{n\geq 0} e_n(x) y^n$. 
Set 
$G(y)\seteq\prod_i \{(t x_i y;q)_\infty / (x_i y;q)_\infty\}
=\sum_{n\geq 0} g_n(x;q,t) y^n$,
 where $(x;q)_\infty\seteq\prod_{i\geq 0}(1-q^i x)$.
For a partition
$\lambda=(\lambda_1,\lambda_2,\ldots)$ set 
$p_\lambda\seteq p_{\lambda_1}p_{\lambda_2}\cdots$. 
Similarly we write
$e_\lambda\seteq e_{\lambda_1}e_{\lambda_2}\cdots$ and
$g_\lambda\seteq g_{\lambda_1}g_{\lambda_2}\cdots$.
It is known that $\{p_\lambda\}$, $\{m_\lambda\}$,   
$\{e_\lambda\}$ and $\{g_\lambda\}$ form bases of $\Lambda_\bbF$.

Recall Macdonald's scalar product
$\qtpr{p_\lambda,p_\mu}
\seteq
\delta_{\lambda,\mu}
\prod_{i \ge 1}i^{m_i}m_i!
\prod_{j \ge 1}(1-q^{\lambda_j})/(1-t^{\lambda_j})$,
where $m_i$ denotes the number of parts $i$ in the partition $\lambda$. 
For any dual bases $\{u_\lambda\}$ and $\{v_\lambda\}$, we have
\begin{align}\label{eq:kernel:original}
\Pi(x,y;q,t)\seteq \prod_{i,j} 
\dfrac{(t  x_iy_j;q)_\infty}{(x_i y_j;q)_\infty}
=\sum_\lambda u_\lambda(x)v_\lambda(y).
\end{align}

It is known that $\{m_\lambda\}$ and $\{g_\lambda\}$ form  
dual  bases, namely we have $\qtpr{m_\lambda,g_\mu}=\delta_{\lambda,\mu}$.

Macdonald polynomials $P_\lambda(x;q,t)$ are uniquely characterized by 
(a) the triangular expansion 
$P_\lambda=m_\lambda+\sum_{\mu<\lambda} a_{\lambda\mu} m_\mu$
($a_{\lambda\mu}\in \bbF$), and (b) the 
orthogonality $\qtpr{P_\lambda,P_\mu}=0$ if $\lambda\neq \mu$.

Se set 
\begin{align}
\label{eq:b_def}
%b_\lambda=
b_\lambda(q,t)\seteq\qtpr{P_\lambda(z;q,t),P_\lambda(z;q,t)}^{-1},
\quad
Q_\lambda(z;q,t)\seteq b_\lambda(q,t)P_\lambda(z;q,t).
%\nonumber
\end{align}
Then $\{Q_\lambda\}$ forms a dual basis to $\{P_\lambda\}$.

\subsection{The isomorphism $\iota:\Lambda_\bbF \rightarrow \calA$}
Both $\Lambda_\bbF$ and $\calA$ are commutative rings 
having the same 
Poincar\'e series $
\sum_{n\ge 0} (\dim_\bbF \Lambda_\bbF^n) z^n=
\sum_{n\ge 0} (\dim_\bbF \calA_n) z^n=\prod_{m\ge 1}(1-z^m)^{-1}$,
where $\Lambda_\bbF^n$ denotes the ring of symmetric functions of degree $n$.
Moreover it was shown in \cite{FHHSY:2009} 
that there is a natural way to identify 
$\Lambda_\bbF$ and $\calA$ from the point of view of the 
free field construction of the Macdonald operators. 
Based on the finding in \cite{FHHSY:2009} 
we give an isomorphism $\iota:\Lambda_\bbF \rightarrow \calA$
as follows.

For $p \in \bbF$, let
\begin{align}
&\ep_n(z_1,z_2,\ldots,z_n;p)\seteq
\prod_{1\le i<j\le n}\dfrac{(z_i-p z_j)(z_i-p^{-1}z_j)}{(z_i-z_j)^2},
\label{eq:ep}
\end{align}
and set $\ep_{\lambda}(z;p):= (\ep_{\lambda_1}*\ep_{\lambda_2}* \cdots*\ep_{\lambda_l})
(z;p)$ for a multi-index 
$\lambda = (\lambda_1, \lambda_2, \ldots, \lambda_l)$. 

\begin{fct}[{\cite[Propositions 2.20 \& 2.23]{FHHSY:2009}}]
For $i=1,2,3$, 
$\{ \ep_{\lambda}(z;q_i) \}_{\lambda\vdash n}$ forms a basis of $\calA_n$.
\end{fct}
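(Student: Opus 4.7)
The proof splits into two steps: showing $\ep_\lambda(z;q_i)\in\calA_n$, and proving that these $p(n)$ elements are linearly independent, which by Fact \ref{thm:1} is enough to make them a basis. For membership, the axioms (i) and (iii) of $\calA_n$ are immediate from the formula \eqref{eq:ep}. The degenerate $\bbC\bbP^1$ condition (ii) is transparent from the pairwise factorization: rescaling $z_j\mapsto \xi z_j$ for $n-k<j\le n$, each pair entirely inside one of the two blocks reproduces $\ep_{n-k}$ or $\ep_k$, while every cross factor $\frac{(z_i-p\xi z_j)(z_i-p^{-1}\xi z_j)}{(z_i-\xi z_j)^2}$ tends to $1$ both as $\xi\to 0$ and as $\xi\to\infty$; both limits thus equal $\ep_{n-k}(\cdot;p)\,\ep_k(\cdot;p)$ and in particular coincide. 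For the wheel condition (iv) with $p=q_1$, substituting $(z_1,z_2,z_3)=(x,q_1 x,q_1 q_2 x)$ into \eqref{eq:ep} makes the factor $z_1-p^{-1}z_2=x-x$ vanish; the other wheel and the cases $p=q_2,q_3$ reduce to the same one-line observation (e.g.\ for $p=q_3$ the vanishing factor is $z_1-q_3 z_3=x-q_3\cdot q_1 q_2\,x=x-x$). Closure of $\calA$ under $*$ (Fact \ref{thm:1}) then gives $\ep_\lambda(z;q_i)\in\calA_n$ for every $\lambda\vdash n$.

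For linear independence, since $\dim_\bbF\calA_n=p(n)=\#\{\lambda\vdash n\}$, it suffices to exhibit a family of linear functionals on $\calA_n$ that is dual, in a triangular sense, to $\{\ep_\lambda\}$. My plan is: for $\mu=(\mu_1,\ldots,\mu_\ell)\vdash n$, define $D_\mu\colon\calA_n\to\bbF$ by applying $\partial^{(0,\mu_\ell)}$, then $\partial^{(0,\mu_{\ell-1})}$ on the resulting $(n-\mu_\ell)$-variable function, and so on down to $\partial^{(0,\mu_1)}$, which lands in $\calA_0=\bbF$. Using that $\omega(x,\xi y)\to 1$ as $\xi\to 0$, the operator $\partial^{(0,k)}$ should distribute across the outermost $*$ in $\ep_\nu$ up to combinatorial factors counting how the $k$ collapsing variables split among the $\ell(\nu)$ $*$-factors. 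Ordering partitions by reverse dominance, one then expects $D_\mu(\ep_\nu)=0$ unless $\nu\ge\mu$, with $D_\lambda(\ep_\lambda)$ computable as an explicit nonzero product involving each $\ep_{\lambda_j}$ at a fully degenerate point. The resulting triangular matrix $\bigl(D_\mu(\ep_\nu)\bigr)_{\mu,\nu\vdash n}$ is then invertible, yielding linear independence.

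The main obstacle is controlling $D_\mu(\ep_\nu)$ rigorously. The symmetrization in the $*$-product produces many cross-block contributions when $\partial^{(0,k)}$ is applied, and showing that all of these either cancel or fall into deeper strata of the filtration requires delicate combinatorial bookkeeping. The wheel conditions and the identity $\partial^{(0,k)}=\partial^{(\infty,k)}$ built into the definition of $\calA$ should be essential here, forcing the generic cross terms to vanish and producing the desired triangular pattern.
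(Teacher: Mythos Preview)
This statement is quoted as a Fact from \cite{FHHSY:2009}; the present paper offers no proof of its own to compare against. Your membership argument for $\ep_n(z;q_i)\in\calA_n$ is correct (the wheel checks go through for each of the two substitutions and each choice of $q_i$ by the one-line mechanism you indicate), and invoking closure under $*$ from Fact~\ref{thm:1} to pass to $\ep_\lambda\in\calA_n$ is the right move.

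The linear-independence half has a genuine gap already at the level of definitions. As defined in this paper, $\partial^{(0,k)}$ does \emph{not} produce an $(n-k)$-variable function: the limit $\lim_{\xi\to0}f(x_1,\dots,x_{n-k},\xi x_{n-k+1},\dots,\xi x_n)$ is still a function of all $n$ variables. For instance $\partial^{(0,2)}\ep_3(x;p)=6\,\ep_2(x_2,x_3;p)$, which depends on $x_2,x_3$. Your iterated functional $D_\mu\colon\calA_n\to\bbF$ is therefore not well-defined as written, and the argument stalls before the triangularity step, which you yourself already flag as only heuristic.

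The tool that actually does the job in \cite{FHHSY:2009}, and which this paper leans on (see the Remark on the Gordon filtration and the opening of \S\ref{subsec:prf:b-psi}), is the family of specialization maps $\varphi_\mu^{(p)}$ of \eqref{eq:special}: one substitutes blocks of geometric progressions with ratio $p$ into the $z$-variables and lands in $\bbF(y_1,\dots,y_\ell)$. With $p$ adapted to the parameter $q_i$ of the $\ep_\lambda$'s, these maps give a dominance-triangular vanishing pattern on the $\ep_\mu(z;q_i)$ with a computable nonzero value on the diagonal (this is Proposition~2.19 of \cite{FHHSY:2009}, invoked verbatim in \S\ref{subsec:prf:b-psi}); combined with $\dim_\bbF\calA_n=p(n)$ from Fact~\ref{thm:1} this yields the basis statement. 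If you want to repair your approach, replace the cluster-degeneration operators $\partial^{(0,k)}$ by these geometric-progression specializations.
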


Let us write the expansions of $P_\lambda$ in the bases $\{e_\mu\}$ 
and$\{g_\mu\}$ by
\begin{align}
&\label{eq:gtoP}
P_\lambda(z;q,t)=\sum_{\mu\ge\lambda'}c_{\lambda\mu}^{e\to P}(q,t)e_\mu(z;q,t),
\quad
P_\lambda(x;q,t)=\sum_{\mu\ge\lambda}c_{\lambda\mu}^{g\to P}(q,t)g_\mu(x;q,t).
\end{align}
A detailed study of the algebra $\calA$ with the 
help of the free field representation allowed us to establish
the following equality. % which holds in $\calA$.

\begin{fct}[{\cite[\S 3 E]{FHHSY:2009}}]
Set the next two elements in $\calA$.
\begin{align}
&f^{(q^{-1})}_\lambda(z;q,t)
\seteq
\dfrac{t^{-|\lambda|}}{(1-t^{-1})^{|\lambda|}|\lambda|!}
\sum_{\mu\ge\lambda'}c_{\lambda\mu}^{e\to P}(q,t)\ep_{\mu}(z;q)
\dfrac{|\mu|!}{\prod_{i=1}^{\ell(\mu)}\mu_i!},
\label{eq:fq}
\\
&f^{(t)}_\lambda(z;q,t)
\seteq
\dfrac{(-1)^{|\lambda|}}{(1-q)^{|\lambda|}|\lambda|!}
\sum_{\mu\ge\lambda}c_{\lambda\mu}^{g\to P}(q,t)\ep_{\mu}(z;t)
\dfrac{|\mu|!}{\prod_{i=1}^{\ell(\mu)}\mu_i!}.
\label{eq:ft}
\end{align}
Then we have $f^{(q^{-1})}_\lambda(z;q,t)=f^{(t)}_\lambda(z;q,t)$\footnote{
Note that 
the first and second lines of Page 25 in \cite{FHHSY:2009} 
contains typos and
should be read as \eqref{eq:fq} and \eqref{eq:ft}.}.
\end{fct}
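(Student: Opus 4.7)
The plan is to show that both sides equal $\iota(P_\lambda)$ (up to a common explicit scalar), where $\iota:\Lambda_\bbF \to \calA$ is the isomorphism discussed in the previous subsection. The point is that $\iota$ identifies the two multiplicative bases $\{e_\mu\}$ and $\{g_\mu\}$ of $\Lambda_\bbF$ with two natural families $\{\ep_\mu(z;q)\}$ and $\{\ep_\mu(z;t)\}$ in $\calA$, and the two formulas \eqref{eq:fq} and \eqref{eq:ft} are nothing but $\iota$ applied to the two expansions in \eqref{eq:gtoP}, rewritten using the combinatorics of the $*$-product.

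First I would identify the images of the generators. Using the free field construction of \cite{FHHSY:2009} I expect equalities of the form
\begin{align*}
\iota(e_n) = \alpha_n(q,t) \, \ep_n(z;q), \qquad \iota(g_n) = \beta_n(q,t) \, \ep_n(z;t),
\end{align*}
with explicit scalars $\alpha_n,\beta_n \in \bbF$. The factors $\alpha_n,\beta_n$ should be pinned down by evaluating both sides on a principal specialization (or equivalently by computing the leading term under the expansion in some reference basis of $\calA_n$ where the free field formulas give the coefficients in closed form). I expect these computations to yield $\alpha_n = t^{-n}/((1-t^{-1})^n n!)$ and $\beta_n = (-1)^n/((1-q)^n n!)$, matching the prefactors visible in \eqref{eq:fq} and \eqref{eq:ft}.

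Second I would extend to partitions. Since $\iota$ is a ring homomorphism, $\iota(e_\mu) = \iota(e_{\mu_1})*\cdots*\iota(e_{\mu_l})$; the $*$-product is a symmetrization, so iterating \eqref{eq:star} produces precisely the combinatorial factor $|\mu|!/\prod_i \mu_i!$ relative to a single $\ep_{|\mu|}$-like normalization, which is exactly the multinomial coefficient that appears in both \eqref{eq:fq} and \eqref{eq:ft}. The same argument applies verbatim to $\iota(g_\mu)$ with $\ep_\mu(z;t)$ in place of $\ep_\mu(z;q)$.

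Third, I would apply $\iota$ to the two triangular expansions in \eqref{eq:gtoP}, giving
\begin{align*}
\iota(P_\lambda) = \sum_{\mu\ge \lambda'} c_{\lambda\mu}^{e\to P}(q,t)\, \iota(e_\mu) = \sum_{\mu\ge \lambda} c_{\lambda\mu}^{g\to P}(q,t)\, \iota(g_\mu).
\end{align*}
Substituting the formulas from the first two steps, the right-hand sides become exactly $f^{(q^{-1})}_\lambda(z;q,t)$ and $f^{(t)}_\lambda(z;q,t)$, respectively, so both are equal to $\iota(P_\lambda)$ and hence to each other.

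The main obstacle is the precise bookkeeping in the first step: extracting $\alpha_n$ and $\beta_n$ from the free field construction and verifying that they are consistent with the overall prefactors in \eqref{eq:fq}--\eqref{eq:ft}, including the signs and the $(1-t^{-1})^{|\lambda|}$ versus $(1-q)^{|\lambda|}$ denominators. Once these two constants are correctly identified, steps two and three are essentially formal consequences of $\iota$ being a ring isomorphism and of the definition \eqref{eq:star} of the $*$-product.
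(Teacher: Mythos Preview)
Your strategy is correct and coincides with the paper's own logic. The Fact itself is quoted from \cite{FHHSY:2009} without a proof here, but immediately afterward the paper defines $\iota$ on the $e$-basis and establishes in Proposition~\ref{prp:iota}(1) that $\iota(g_\lambda)=\tfrac{(-1)^{|\lambda|}}{(1-q)^{|\lambda|}\prod_i\lambda_i!}\,\ep_\lambda(z;t)$; applying $\iota$ to the two expansions in \eqref{eq:gtoP} then gives $f^{(q^{-1})}_\lambda=\iota(P_\lambda)=f^{(t)}_\lambda$ exactly as in your step three. So your outline and the paper's argument are the same, just placed in different logical order.

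Two small sharpenings. First, the nontrivial content sits entirely in your ``first step'', and the paper names the needed tool precisely: the Wronski relation of \cite[Proposition~3.11]{FHHSY:2009}. Your suggested devices (principal specialization, leading-term matching) would at best pin down the scalars $\alpha_n,\beta_n$ \emph{once} you already know that $\iota(g_n)$ is proportional to $\ep_n(z;t)$; it is the Wronski identity in $\calA$, mirroring the classical relation between $\{e_n\}$ and $\{g_n\}$ in $\Lambda_\bbF$, that forces this proportionality in the first place. Second, the multinomial factor $|\mu|!/\prod_i\mu_i!$ in your step two is not produced by iterating the $*$-product---the definition $\ep_\mu:=\ep_{\mu_1}*\cdots*\ep_{\mu_l}$ already absorbs all the symmetrization---but simply by the fact that each $\alpha_n$ (resp.\ $\beta_n$) carries a $1/n!$, so $\prod_i\alpha_{\mu_i}$ carries $1/\prod_i\mu_i!$, and comparing with the overall prefactor $1/|\lambda|!$ in \eqref{eq:fq}--\eqref{eq:ft} gives the multinomial coefficient.
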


\begin{dfn}
Let 
$F_\lambda(z;q,t)\seteq f^{(q^{-1})}_\lambda(z;q,t)=f^{(t)}_\lambda(z;q,t)$.
\end{dfn}

\begin{dfn}
Define the isomorphism $\iota : \Lambda_\bbF \rightarrow \calA$ by 
\begin{align*}
\iota(e_\lambda)=
\dfrac{t^{-|\lambda|}}{(1-t^{-1})^{|\lambda|}}
\dfrac{1}{\prod_{i=1}^{\ell(\mu)}\lambda_i!}\ep_{\lambda}(z;q).
\end{align*}
\end{dfn}

\begin{prp}\label{prp:iota}
(1)
We have
\begin{align*}
\iota(g_\lambda)=
\dfrac{(-1)^{-|\lambda|}}{(1-q)^{|\lambda|}}
\dfrac{1}{\prod_{i=1}^{\ell(\mu)}\lambda_i!}\ep_{\lambda}(z;t).
\end{align*}

(2)
We have $\iota(P_\lambda)=F_\lambda(z;q,t)$.
\end{prp}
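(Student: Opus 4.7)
The plan is to prove (2) first and then read (1) off the equality $f^{(q^{-1})}_\lambda = f^{(t)}_\lambda$ supplied by the preceding Fact, applied through the basis change between $\{P_\lambda\}$ and $\{g_\mu\}$.

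For (2), I would substitute the $e$-expansion \eqref{eq:gtoP} of $P_\lambda$ into $\iota$ and use the definition of $\iota$ to rewrite each $\iota(e_\mu)$ as a scalar multiple of $\ep_\mu(z;q)$. Because $c_{\lambda\mu}^{e\to P}(q,t)$ vanishes unless $|\mu|=|\lambda|$, the prefactor $t^{-|\mu|}/(1-t^{-1})^{|\mu|}$ is constant across the sum and may be pulled out front, while the extra $|\mu|!/|\lambda|!$ appearing in \eqref{eq:fq} equals $1$. The resulting expression matches \eqref{eq:fq} verbatim, giving $\iota(P_\lambda) = f^{(q^{-1})}_\lambda = F_\lambda$.

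For (1), set $h_\mu \seteq \frac{(-1)^{|\mu|}}{(1-q)^{|\mu|}} \frac{1}{\prod_i \mu_i!}\,\ep_\mu(z;t)$, the proposed value of $\iota(g_\mu)$. Applying $\iota$ to the $g$-expansion $P_\lambda = \sum_{\mu \geq \lambda} c_{\lambda\mu}^{g\to P}(q,t)\,g_\mu$ and invoking (2) gives $\sum_{\mu} c_{\lambda\mu}^{g\to P}(q,t)\,\iota(g_\mu) = F_\lambda$, while the same bookkeeping applied to \eqref{eq:ft} also yields $F_\lambda = f^{(t)}_\lambda = \sum_{\mu} c_{\lambda\mu}^{g\to P}(q,t)\,h_\mu$. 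Subtracting these two identities produces $\sum_\mu c_{\lambda\mu}^{g\to P}(q,t)\bigl(\iota(g_\mu)-h_\mu\bigr) = 0$ for every $\lambda \vdash n$. The matrix $\bigl(c_{\lambda\mu}^{g\to P}\bigr)_{\lambda,\mu \vdash n}$ is invertible over $\bbF$ because both $\{P_\lambda\}_{\lambda \vdash n}$ and $\{g_\mu\}_{\mu \vdash n}$ are bases of $\Lambda_\bbF^n$, so $\iota(g_\mu) = h_\mu$ for every $\mu$, which is (1).

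There is no real analytic obstacle: the argument is essentially a transcription of the earlier identity $f^{(q^{-1})}_\lambda = f^{(t)}_\lambda$ into the language of $\iota$. The only step requiring care is the bookkeeping of the two distinct scalar prefactors and of the two parameters ($\ep_\mu(z;q)$ in \eqref{eq:fq} versus $\ep_\mu(z;t)$ in \eqref{eq:ft}), together with the repeated use of $|\mu|!/|\lambda|!=1$ on the support of $c^{e\to P}_{\lambda\mu}$ and $c^{g\to P}_{\lambda\mu}$ to freely move factorials between the outer prefactor and the summand.
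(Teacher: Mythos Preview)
Your argument is correct. For part (2) you do exactly what the paper does: apply $\iota$ to the $e$-expansion in \eqref{eq:gtoP}, pull the constant prefactor out using $|\mu|=|\lambda|$, and recognize $f^{(q^{-1})}_\lambda=F_\lambda$.

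For part (1) your route differs from the paper's. The paper proves (1) first, by invoking the Wronski relation of \cite[Proposition 3.11]{FHHSY:2009}, which directly relates the $\ep_\lambda(z;q)$ and $\ep_\lambda(z;t)$ families inside $\calA$; (2) is then immediate. You instead reverse the order: prove (2) via the $e$-expansion, then read (1) off the identity $f^{(q^{-1})}_\lambda=f^{(t)}_\lambda$ by expanding both sides in the $g$-basis and inverting the triangular transition matrix $(c^{g\to P}_{\lambda\mu})$. Your approach has the virtue of being entirely internal to this paper---it uses only the Fact already stated (that $f^{(q^{-1})}_\lambda=f^{(t)}_\lambda$) and elementary linear algebra, avoiding a separate appeal to the Wronski relation. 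The paper's approach, on the other hand, establishes (1) independently of the Macdonald basis and would survive even without the Fact on $F_\lambda$.
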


\begin{proof}
(1) By the Wronski relation given in \cite[Proposition 3.11]{FHHSY:2009}.

(2) By \eqref{eq:gtoP} and the definitions of $\iota$ and $F_\lambda$.
\end{proof}

\begin{rmk}
To explain the importance of the element
$F_\lambda(z;q,t)$,
we recall the Gordon filtration on $\calA$.
For $p\in \bbF$ and $\lambda=(\lambda_1,\ldots,\lambda_l)\vdash n$,
we defined a linear map
\begin{align}\label{eq:special}
\begin{array}{l c c l}
\varphi_\lambda^{(p)} : 
&\calA_n          &\longrightarrow&\bbF(y_1,\ldots,y_l)\\
&f(z_1,\ldots,z_n)&\mapsto &f(y_1,p y_1,\ldots,p^{\lambda_1-1}y_1,\\
&                 &        &\phantom{f(} y_2,p y_2,\ldots,p^{\lambda_2-1}y_2,\\
&                 &        &\phantom{f(} \ldots \\
&                 &        &\phantom{f(} y_l,p y_l\ldots,p^{\lambda_l-1}y_l),
\end{array}
\end{align}
called the \emph{specialization map}.
The Gordon filtration is given by
$\calA_{n,\lambda}^{(q_i)}
\seteq\bigcap_{\mu\not\le\lambda}\ker\varphi_\mu^{(q_i)}$ for $i=1,2,3$.
Then by \cite[Theorem 1.19]{FHHSY:2009} , 
$\calA_{n,\mu}^{(q^{-1})} \cap \calA_{n,{\mu'}}^{(t)}$ is one dimensional 
and is spanned by $F_\lambda(z;q,t)$.
\end{rmk}

\subsection{The kernel function}
Now we are ready to study the kernel function from the point of view of the algebra $\calA$.
\begin{dfn}
Introduce $K_n(x,z;q,t)\in \Lambda_\bbF^n \otimes \calA_n$ 
%(with a suitable completion) 
as
\begin{align*}
K_n(x,z;q,t) \seteq \sum_{\lambda\vdash n} Q_\lambda(x)F_\lambda(z;q,t).
\end{align*}
\end{dfn}

\begin{rmk}
The name ``kernel function'' comes from $\Pi(x,y)$ 
in \eqref{eq:kernel:original}.
By Proposition \ref{prp:iota} (2),
we have, in a suitable completion of $\Lambda_\bbF\otimes \calA$,
\begin{align*}
\sum_{n\geq 0} K_n(x,z;q,t)=\sum_\lambda Q_\lambda(x) \iota(P_\lambda(y)),
\end{align*}
where $\lambda$ runs over all the partitions of every non-negative integer.
Thus $K_n$ is a homogeneous component of the analogue of $\Pi(x,y)$.
\end{rmk}

\begin{prp}\label{prp:K2}
In %a suitable completion of 
$\Lambda_\bbF\otimes \calA$ 
we have
\begin{align}\label{eq:K2}
K_n(x,z;q,t)=\dfrac{(-1)^n}{(1-q)^n n!}
\sum_{\lambda\vdash n}m_\lambda(x)
\epsilon_\lambda(z;t) 
\dfrac{|\lambda|!}{\prod_{i=1}^{\ell(\lambda)}\lambda_i!}
\end{align}
\end{prp}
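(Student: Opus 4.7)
The plan is to use the fact that $\sum_\lambda u_\lambda(x)v_\lambda(y)$ is independent of the choice of dual bases $\{u_\lambda\},\{v_\lambda\}$ with respect to Macdonald's scalar product, and then transport the identity to $\calA$ via $\iota$.

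First I would invoke duality to rewrite the defining sum. Since $\{P_\lambda\},\{Q_\lambda\}$ and $\{m_\lambda\},\{g_\lambda\}$ are both pairs of dual bases of $\Lambda_\bbF$, the standard change-of-basis argument (if $u_\lambda=\sum a_{\lambda\nu}m_\nu$ and $v_\lambda=\sum b_{\lambda\nu}g_\nu$, duality gives $AB^{T}=I$, hence $A^{T}B=I$ as $A,B$ are square) yields the degree-by-degree identity
\begin{align*}
\sum_{\lambda\vdash n}Q_\lambda(x)P_\lambda(y)=\sum_{\lambda\vdash n}m_\lambda(x)g_\lambda(y)
\end{align*}
in $\Lambda_\bbF^n\otimes\Lambda_\bbF^n$.

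Next I would apply $\mathrm{id}\otimes\iota$ to this identity. By Proposition \ref{prp:iota} (2) the left-hand side becomes $\sum_{\lambda\vdash n}Q_\lambda(x)F_\lambda(z;q,t)=K_n(x,z;q,t)$, while on the right-hand side Proposition \ref{prp:iota} (1) substitutes
\begin{align*}
\iota(g_\lambda)=\dfrac{(-1)^{|\lambda|}}{(1-q)^{|\lambda|}}\dfrac{1}{\prod_{i=1}^{\ell(\lambda)}\lambda_i!}\ep_\lambda(z;t).
\end{align*}
Using $(-1)^{-|\lambda|}=(-1)^{|\lambda|}$ and $|\lambda|=n$ throughout the sum, I would factor the common $(-1)^n/(1-q)^n$ outside, and multiply and divide by $n!=|\lambda|!$ inside to bring the expression into precisely the asserted form \eqref{eq:K2}.

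There is essentially no obstacle beyond bookkeeping: the only nontrivial ingredient is the basis-independence of $\sum u_\lambda v_\lambda$, which is a standard consequence of $\{m_\lambda\},\{g_\lambda\}$ and $\{P_\lambda\},\{Q_\lambda\}$ both being dual with respect to $\qtpr{\ ,\ }$. All remaining work is just inserting the explicit image of $g_\lambda$ under $\iota$ and rearranging scalar factors, so the proof reduces to a short two-line calculation after the duality step is invoked.
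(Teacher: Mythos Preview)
Your proof is correct and is essentially the same argument as the paper's, just packaged more abstractly: where you invoke the basis-independence of $\sum_\lambda u_\lambda\otimes v_\lambda$ and then apply $\mathrm{id}\otimes\iota$ using Proposition~\ref{prp:iota}, the paper writes out the transition $m_\lambda=\sum_{\mu\le\lambda}c_{\mu\lambda}^{g\to P}Q_\mu$ by pairing against $P_\mu$, substitutes into the right-hand side, and recognizes the inner sum as the formula \eqref{eq:ft} for $F_\mu$. The underlying duality step and the use of $\iota(P_\lambda)=F_\lambda$, $\iota(g_\lambda)=\frac{(-1)^{|\lambda|}}{(1-q)^{|\lambda|}\prod_i\lambda_i!}\ep_\lambda(z;t)$ are identical in both.
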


\begin{proof}
First we show 
\begin{align}\label{eq:Qtom}
m_\lambda(x)=\sum_{\mu\le\lambda}c_{\mu\lambda}^{g\to P}(q,t)Q_\mu(x;q,t).
\end{align}
Since $\{Q_\mu(x;q,t)\}$ is a basis of $\Lambda_\bbF$, 
we can expand $m_\lambda(x)=\sum_{\nu}c_{\nu\lambda} Q_\nu(x;q,t)$ 
with $c_{\nu\lambda}\in\bbF$.
Then the pairing $\qtpr{m_\lambda,P_\mu}$ is calculated as
$\qtpr{m_\lambda,P_\mu}
=\qtpr{\sum_{\nu}c_{\nu\lambda} Q_\nu(z;q,t),P_\mu}
=c_{\mu\lambda}$,
where %at the last equality 
we used the fact that $\{P_\lambda\}$ 
and $\{Q_\lambda\}$ are dual.
On the other hand, by \eqref{eq:gtoP}, we have %can proceed as 
$\qtpr{m_\lambda,P_\mu}
=\qtpr{m_\lambda,\sum_{\nu\ge\mu}c_{\mu\nu}^{g\to P}(q,t)g_\mu}
=c_{\mu\lambda}^{g\to P}(q,t)$.
Comparing both expressions, we obtain \eqref{eq:Qtom}.

Then we have
\begin{align*}
%&\dfrac{(-1)^n}{(1-q)^n n!}\sum_{\lambda\vdash n}
% m_\lambda(x)\ep_\lambda(z;t)
% \dfrac{|\lambda|!}{\prod_{i=1}^{\ell(\lambda)}\lambda_i!}
%\\
%&
\text{RHS of } \eqref{eq:K2}
&=\dfrac{(-1)^n}{(1-q)^n n!}\sum_{\lambda\vdash n}
  \sum_{\mu\le\lambda}c_{\mu\lambda}^{g\to P}(q,t)Q_\mu(x;q,t)
  \ep_\lambda(z;t)
  \dfrac{|\lambda|!}{\prod_{i=1}^{\ell(\lambda)}\lambda_i!}\\
&=\dfrac{(-1)^n}{(1-q)^n n!}\sum_{\mu\vdash n}Q_\mu(x;q,t)
  \sum_{\lambda\ge\mu}c_{\mu\lambda}^{g\to P}(q,t)
  \ep_\lambda(z;t)
  \dfrac{|\lambda|!}{\prod_{i=1}^{\ell(\lambda)}\lambda_i!}
%\\
%&
=\sum_{\mu\vdash n}Q_\mu(x;q,t)F_\mu(z;q,t).
\end{align*}
\end{proof}

Consider the case of finitely many variables and set $x=(x_1,x_2,\ldots,x_m)$.
Also let $z=(z_1,z_2,\ldots,z_n)$ 
be the set of variables for the elements in $\calA_n$.

\begin{prp}\label{prp:K}
We have
\begin{align}
\label{eq:K:dfn}
K_n(x,z;q,t) =
\dfrac{(-1)^{n}}{(1-q)^{n}n!}
\sum_{i_1=1}^m \sum_{i_2=1}^m \cdots \sum_{i_n=1}^m 
 x_{i_1}x_{i_2}\cdots x_{i_n}
 \prod_{1\le\alpha<\beta\le n}\gamma_{i_\alpha,i_\beta}(z_\alpha,z_\beta;q,t),
\end{align}
where the function $\gamma_{i,j}(z,w;q,t)$ is given by
\begin{align}\label{eq:gamma}
\gamma_{i,j}(z,w;q,t)\seteq
\begin{cases}
\dfrac{(z-t w)(z-t^{-1}w)}{(z-w)^2} & i=j,\\
\dfrac{(z-q^{-1} w)(z-t w)(z-q t^{-1}w)}{(z-w)^3} & i<j,\\
\dfrac{(z-q w)(z-t^{-1} w)(z-q^{-1} t w)}{(z-w)^3}& i>j. 
\end{cases}
\end{align}
\end{prp}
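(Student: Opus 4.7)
The plan is to reduce the identity \eqref{eq:K:dfn} to Proposition \ref{prp:K2}, which already expresses $K_n(x,z;q,t)$ as a sum over partitions involving $m_\lambda(x)$ and $\ep_\lambda(z;t)$. I would expand the right-hand side of \eqref{eq:K:dfn} by grouping the indices $i_1,\ldots,i_n$ according to their values: for each sequence $(i_1,\ldots,i_n)\in\{1,\ldots,m\}^n$ set $A_k\seteq\{\alpha:i_\alpha=k\}$, so $(A_1,\ldots,A_m)$ is an ordered set-partition of $\{1,\ldots,n\}$ (empty blocks allowed), and $x_{i_1}\cdots x_{i_n}=\prod_k x_k^{|A_k|}$.

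The next step is to factor $\prod_{\alpha<\beta}\gamma_{i_\alpha,i_\beta}(z_\alpha,z_\beta)$ according to this block structure. For $\alpha<\beta$ with $i_\alpha=i_\beta$, the factor $\gamma_{i_\alpha,i_\beta}(z_\alpha,z_\beta)$ is exactly the $(\alpha,\beta)$-contribution to $\ep_{|A_k|}(z_{A_k};t)$ via \eqref{eq:ep}. For $\alpha<\beta$ in different blocks, a short algebraic check using \eqref{eq:omega} shows $\gamma_{i_\alpha,i_\beta}(z_\alpha,z_\beta)=\omega(z_\gamma,z_\delta)$, where $\gamma$ is whichever of $\alpha,\beta$ lies in the block of smaller index and $\delta$ the other; the case $i_\alpha>i_\beta$ uses $\omega(w,z)=(z-qw)(z-t^{-1}w)(z-q^{-1}tw)/(z-w)^3$. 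Consequently
\begin{align*}
\prod_{\alpha<\beta}\gamma_{i_\alpha,i_\beta}(z_\alpha,z_\beta)
=\prod_{k=1}^m \ep_{|A_k|}(z_{A_k};t)
 \prod_{1\le r<s\le m}\prod_{\gamma\in A_r,\,\delta\in A_s}\omega(z_\gamma,z_\delta).
\end{align*}

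To finish, I would invoke the definition \eqref{eq:star} of the $*$-product. Unfolding $\ep_{(\lambda_1,\ldots,\lambda_m)}(z;t)=\ep_{\lambda_1}*\cdots*\ep_{\lambda_m}$ and writing out $\Sym$, each $\sigma\in\frakS_n$ produces an ordered set-partition of $\{1,\ldots,n\}$ of block sizes $(\lambda_1,\ldots,\lambda_m)$, and each such set-partition is hit by exactly $\prod_k\lambda_k!$ permutations because each $\ep_{\lambda_k}$ is symmetric in its arguments. This yields
\begin{align*}
\sum_{(A_1,\ldots,A_m):\,|A_k|=\lambda_k}
 \prod_k \ep_{\lambda_k}(z_{A_k};t)
 \prod_{r<s}\prod_{\gamma\in A_r,\,\delta\in A_s}\omega(z_\gamma,z_\delta)
=\frac{n!}{\prod_k\lambda_k!}\ep_{(\lambda_1,\ldots,\lambda_m)}(z;t).
\end{align*}
Combining this with the previous factorization, the right-hand side of \eqref{eq:K:dfn} becomes
\begin{align*}
\frac{(-1)^n}{(1-q)^n n!}\sum_{(\lambda_1,\ldots,\lambda_m)}\prod_k x_k^{\lambda_k}\cdot\frac{n!}{\prod_k\lambda_k!}\ep_{(\lambda_1,\ldots,\lambda_m)}(z;t),
\end{align*}
where the sum ranges over compositions of $n$ into $m$ non-negative parts. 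Regrouping compositions by their underlying partition $\lambda$, and using $m_\lambda(x_1,\ldots,x_m)=\sum_\mu\prod_k x_k^{\mu_k}$ (sum over distinct permutations of $\lambda$ padded with zeros), the expression becomes precisely the right-hand side of the formula in Proposition \ref{prp:K2}.

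The main obstacle is the combinatorial bookkeeping of $\Sym$: verifying the multiplicity $\prod_k\lambda_k!$ of each ordered set-partition, and handling empty blocks consistently so that the composition sum matches the $i:\{1,\ldots,n\}\to\{1,\ldots,m\}$ sum. The algebraic identification $\gamma_{i,j}(z,w)=\omega(w,z)$ when $i>j$ is a minor but essential check that makes the block-orientation of $\omega$-factors uniform across the two non-diagonal cases.
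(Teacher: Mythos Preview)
Your proposal is correct and follows essentially the same route as the paper's proof: both identify $\gamma_{i,j}$ with $\ep_2(\cdot;t)$ or $\omega(\cdot,\cdot)$ as in \eqref{eq:gamma-omega}, regroup the sum over $(i_1,\ldots,i_n)$ by ordered set-partitions $(A_1,\ldots,A_m)$ (the paper's $I_k$), recognize the sum over set-partitions of fixed block sizes as $\dfrac{n!}{\prod_k a_k!}\ep_a(z;t)$, and then pass from compositions to partitions to recover the expression in Proposition~\ref{prp:K2}. The only cosmetic difference is that the paper performs the last step by averaging over $\frakS_m$ and dividing by the stabilizer, whereas you regroup compositions directly via the definition of $m_\lambda$; both rely on the commutativity of $(\calA,*)$ to identify $\ep_\mu$ for all rearrangements $\mu$ of $\lambda$, which you use implicitly and might state explicitly.
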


\begin{proof}
Note that we have 
\begin{align}\label{eq:gamma-omega}
\gamma_{i,j}(z,w;q,t) =
\begin{cases}
\ep_2(z,w;t) & i=j,\\
\omega(z,w;q^{-1},t,q t^{-1}) & i<j,\\
\omega(z,w;q,t^{-1},q^{-1} t)=\omega(w,z;q^{-1},t,q t^{-1})& i>j, 
\end{cases}
\end{align}
which is obtained from \eqref{eq:omega}, \eqref{eq:ep} and \eqref{eq:gamma}.
Thus we have
\begin{align*}
&\sum_{i_1=1}^m \sum_{i_2=1}^m \cdots \sum_{i_n=1}^m 
 x_{i_1}x_{i_2}\cdots x_{i_n}
 \prod_{1\le\alpha<\beta\le n}\gamma_{i_\alpha,i_\beta}(z_\alpha,z_\beta;q,t)\\
 &=
\sum_{I_1,\ldots,I_m}x_1^{a_1}x_2^{a_2}\cdots x_m^{a_m}
\prod_{k=1}^m\ep_{a_k}(z_{I_k};t)
\prod_{1\le i<j\le m}\prod_{\alpha\in I_i,\beta\in I_j}
\omega(z_\alpha,z_\beta;q^{-1},t,q t^{-1}),
\end{align*}
where  $I_k$ ($k=1,2,\ldots,m$) is a subset of $\{1,2,\ldots,n\}$ 
such that $|I_k|=a_k$, $I_1\cup I_2\cup\cdots\cup I_m=\{1,\ldots,n\}$.
Using the multi-index notation $a=(a_1,\ldots,a_m)\in\bbN^m$, we have
\begin{align*}%\label{eq:K:prf1}
=\sum_{a\in\bbN^m, |a|=n}x^{a} 
 \dfrac{n!}{\prod_{k=1}^m a_k!} \ep_{a}(z;t)
\end{align*}
with $|a|\seteq a_1+\cdots+a_m$.
% and 
%$\ep_{a}(z;t):=(\ep_{a_1}^{(t)}*\cdots*\ep_{a_m}^{(t)})(z)$.
Applying $\frakS_m$ on the running index $a$ and averaging  them, we have
\begin{align*}
=\dfrac{1}{n!}\sum_{\sigma\in\frakS_m}\sum_{a\in\bbN^m,|a|=n}
  x^{\sigma(a)}\dfrac{n!}{\prod_{k=1}^m a_k!} \ep_{\sigma(a)}(z;t).
\end{align*}
Dividing %the group 
$\frakS_m$ by the stabilizer $\text{Stab}(a)$ 
of $a\in\bbN^m$ and using the commutativity of $\calA$, we have
\begin{align*}
=\dfrac{1}{n!}\sum_{a\in\bbN^m,|a|=n} 
 \#\text{Stab}(a) \dfrac{n!}{\prod_{k=1}^m a_k!} \Big(
 \sum_{\overline{\sigma}\in\frakS_m/\text{Stab}(a)}
 x^{\overline{\sigma}(a)} \Big)\ep_{a}(z;t).
\end{align*}
Then we obtain the result 
by taking a partition $\lambda$ as the running index.
\end{proof}
%This proposition says that the finite kernel function $K$ is 
%a homogeneous symmetric polynomial of $x_i$'s and
%the coefficients are the elements of $\calA_n$.

%%%%%%%%%%%%%%%%%%%%%%%%%%%%%%%%%%%%%%%%%%%%%%%%%%
%%%%%%%%%%%%%%%%%%%%%%%%%%%%%%%%%%%%%%%%%%%%%%%%%%
\subsection{Macdonald's tableau sum formula}\label{subsec:tableau}
We recall the tableau sum formula for the Macdonald polynomials.

Let $\Tb{\lambda;m}$ denotes the set of 
all the %possible 
ways of drawing numbers $1,2,\ldots,m$ 
into the Young diagram of shape $\lambda$
\emph{without any conditions}.
Reading the numbers from %the 
left to right then 
top to bottom, namely in the English reading manner,
we get a bijection between 
$\Tb{\lambda;m}$ and  
the set   $ \{1,2,\ldots,m\}^n$.

Let 
$\Tbr{\lambda;m}$ denotes the subset of $\Tb{\lambda;m}$ 
in which the numbers in each row are arranged in non-decreasing manner.
The element of $\Tbr{\lambda;m}$ is uniquely
specified by the set of numbers $\theta_{i,j}$ which denote the number of $j$ 
in the $i$-th row.
We have $\lambda_i = \sum_{k=1}^{n} \theta_{i,k}$ for $1 \leq i \leq n$.
Next we introduce a sequence 
$\lambda^{(j)}=(\lambda^{(j)}_1,\lambda^{(j)}_2,\ldots)$
by setting $\lambda^{(j)}_i\seteq \sum_{k=1}^j\theta_{i,k}$.
It is clear that we have 
$\emptyset=\lambda^{(0)}\subset \lambda^{(1)}\subset\cdots \subset \lambda^{(m)}=\lambda$. 
Note that $\lambda^{(j)}$ may not be a partition.

Let $\SSTb{\lambda;m}$  be the set of semi-standard tableaux. 
%Semi-standard
A semi-standard tableau $T$ is %written
expressed as a sequence of partitions 
$\emptyset=\lambda^{(0)}\subset \lambda^{(1)}\subset\cdots \subset \lambda^{(m)}=\lambda$,
where the skew diagrams $\lambda^{(k)}/\lambda^{(k-1)}$ ($k=1,2,\ldots,m$)
are horizontal strips. 
We have $\theta_{i,j}=0$ for $i>j$,
 $\lambda_i = \sum_{k=i}^{n} \theta_{i,k}$  for $1 \leq i \leq n$,
and
\begin{align*}
0\leq \theta_{i,j}\leq \lambda_i-\lambda_{i+1}-
\sum_{k=j+1}^{\ell(\lambda)}(\theta_{i,k}-\theta_{i+1,k})
\end{align*}
for $1\leq i<j\leq \ell(\lambda)$.

It is known that the $b_\lambda(q,t)$ in \eqref{eq:b_def} 
has the factorized form.
\begin{align}
\label{eq:b_fact}
Q_\lambda(x;q,t)=b_\lambda(q,t) P_\lambda(x;q,t),\quad
b_\lambda(q,t) =
\prod_{s\in\lambda}\dfrac{1-q^{a(s)}t^{\ell(s)+1}}{1-q^{a(s)+1}t^{\ell(s)}},
\end{align}
where for a box $s=(i,j)$ of %the partition 
$\lambda$,
$a(s)\seteq \lambda_i-j$ is the arm-length and 
$\ell(s)\seteq \lambda'_j-i$ is the leg-length.

The $P_\lambda(x;q,t)$ has the tableau sum formula:
\begin{align*}
P_\lambda(x;q,t)=\sum_{T\in\SSTb{\lambda;m}}x^T \psi_T(q,t).
\end{align*}
Here the coefficient $\psi(q,t)\in\bbF$ is determined by
\begin{align}\label{eq:tableau:formula}
\begin{split}
&\psi_T(q,t)\seteq \prod_{k=1}^m \psi_{\lambda^{(k)}/\lambda^{(k-1)}}(q,t),\\
&\psi_{\lambda/\mu}(q,t)\seteq 
 \prod_{1\le i \le j\le \ell(\mu)}^n 
 \dfrac{f(q^{\mu_i-\mu_j}t^{j-i})f(q^{\lambda_i-\lambda_{j+1}}t^{j-i})}
       {f(q^{\lambda_i-\mu_j}t^{j-i})f(q^{\mu_i-\lambda_{j+1}}t^{j-i})},\quad
 f(u)\seteq\dfrac{(t u;q)_\infty}{(q u;q)_\infty}.
\end{split}
\end{align}
%In the above equations, 
%$\lambda^{(i)}$ is the subtableau of $T$ consisting of 
%the boxes with numbering equal or less than $i$.

The next proposition is obtained by simple combinatorics, 
and we omit the proof for lack of space.
\begin{prp}\label{prp:tableau}
Let $T\in \Tbr{\lambda;m}\setminus \SSTb{\lambda;m}$
and regard $T$ as a sequence $\lambda^{(j)}$ explained as above. 
Then  $\psi_T(q,t)$ calculated from 
(\ref{eq:tableau:formula}) vanishes.
\end{prp}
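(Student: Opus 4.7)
The plan is to show $\psi_T(q,t) = \prod_{k=1}^m \psi_{\lambda^{(k)}/\lambda^{(k-1)}}(q,t)$ vanishes by locating an uncompensated $(1-q^{0})$ factor in the numerator of its expanded form, where every $f(u)=(tu;q)_\infty/(qu;q)_\infty$ is written as a ratio of $q$-Pochhammer products. The argument has two parts: a combinatorial identification of an index pair $(i,k)$, and a global book-keeping of the $(1-q^0)$ factors.

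For the combinatorial step, observe that $T\in\Tbr{\lambda;m}\setminus\SSTb{\lambda;m}$ means some column of $T$ fails to be strictly increasing, so there exist $i,j$ with $T_{i,j}\ge T_{i+1,j}$. Taking $k=T_{i,j}$, row monotonicity gives $\lambda^{(k-1)}_i\le j-1<j\le\lambda^{(k)}_{i+1}$. Next, replace $k$ by the maximal $l\in\{1,\dots,m\}$ satisfying $\lambda^{(l-1)}_i<\lambda^{(l)}_{i+1}$ (with $i$ fixed). Either $k=m$ and $\lambda^{(k)}_i=\lambda_i\ge\lambda_{i+1}=\lambda^{(k)}_{i+1}$, or maximality combined with monotonicity $\lambda^{(k)}_{i+1}\le\lambda^{(k+1)}_{i+1}$ forces $\lambda^{(k)}_i\ge\lambda^{(k+1)}_{i+1}\ge\lambda^{(k)}_{i+1}$; in either case
\[
\lambda^{(k-1)}_i<\lambda^{(k)}_{i+1}\le\lambda^{(k)}_i.
\]

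For the book-keeping, inspect where $(1-q^{0})$ factors can arise as we expand each $f$ in $\psi_T$ into $q$-Pochhammers. A direct case analysis on the four $f$-factors at each $(\alpha,\beta)$ in \eqref{eq:tableau:formula} shows that $(1-q^{0})$ can appear only when $\beta=\alpha$, yielding: one $(1-q^{0})$ in the numerator of $\psi_T$ for each $(l,\alpha)$ with $\lambda^{(l-1)}_\alpha<\lambda^{(l)}_{\alpha+1}$ (from the $(qu;q)_\infty$-half of the denominator factor $f(q^{\lambda^{(l-1)}_\alpha-\lambda^{(l)}_{\alpha+1}})$), and one in the denominator of $\psi_T$ for each $(l,\alpha)$ with $\lambda^{(l)}_\alpha<\lambda^{(l)}_{\alpha+1}$ (from the $(qu;q)_\infty$-half of the numerator factor $f(q^{\lambda^{(l)}_\alpha-\lambda^{(l)}_{\alpha+1}})$). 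The other potential $\beta=\alpha$ contributions are killed by $\lambda^{(l)}_\alpha\ge\lambda^{(l-1)}_\alpha$, while $\beta>\alpha$ yields $1-q^{c}t^{d}$ with $d\ge1$. Since $l\mapsto\lambda^{(l)}_\alpha$ is weakly increasing,
\[
\mathbf{1}[\lambda^{(l-1)}_\alpha<\lambda^{(l)}_{\alpha+1}]-\mathbf{1}[\lambda^{(l)}_\alpha<\lambda^{(l)}_{\alpha+1}]=\mathbf{1}[\lambda^{(l-1)}_\alpha<\lambda^{(l)}_{\alpha+1}\le\lambda^{(l)}_\alpha],
\]
so the net numerator excess of $(1-q^{0})$ factors equals $\sum_\alpha\#\{l:\lambda^{(l-1)}_\alpha<\lambda^{(l)}_{\alpha+1}\le\lambda^{(l)}_\alpha\}$, which is at least one by the triple $(i,k)$ produced above. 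Hence $\psi_T(q,t)=0$. The main obstacle is this global book-keeping — verifying that no stray $(1-q^{0})$ arises from $\beta>\alpha$ or from the other $f$-types at $\beta=\alpha$ — which is precisely the ``simple combinatorics'' omitted by the authors; the telescoping conclusion is then automatic.
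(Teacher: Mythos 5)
The paper never writes down a proof of Proposition \ref{prp:tableau} (it is declared ``simple combinatorics'' and omitted), so your argument must stand on its own, and it essentially does. The localisation step is correct: picking $i$ from a failing column and then the maximal $k$ with $\lambda^{(k-1)}_i<\lambda^{(k)}_{i+1}$ gives $\lambda^{(k-1)}_i<\lambda^{(k)}_{i+1}\le\lambda^{(k)}_i$, which is precisely the configuration that puts a factor $1-q^0$ into the numerator, via the $(qu;q)_\infty$-half of the denominator factor $f(q^{\lambda^{(k-1)}_i-\lambda^{(k)}_{i+1}}t^0)$. Your insistence on counting \emph{net} multiplicities of $1-q^0$, rather than just exhibiting one vanishing factor, is also the right move: when some $\lambda^{(l)}$ fails to be a partition the literal product contains infinite factors (a $1-q^0$ in the overall denominator, e.g.\ for $\lambda=(2,2)$ with rows $(2,2)$ and $(1,1)$), so ``calculated from \eqref{eq:tableau:formula}'' can only mean the rational expression obtained after cancelling matching Pochhammer factors; your indicator identity shows the net multiplicity of $1-q^0$ is at least one while every surviving factor has the form $1-q^ct^d$ with $(c,d)\neq(0,0)$, whence the value is $0$.

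One convention you must state explicitly, because your book-keeping silently relies on it: you take the product in \eqref{eq:tableau:formula} over all pairs $1\le i\le j\le\ell(\lambda)$ (equivalently up to $n$) at every step $k$, whereas the display as printed has upper bound $\ell(\mu)=\ell(\lambda^{(k-1)})$ (the stray superscript $n$ indicates a typo). With the literal bound the proposition is actually false: for $\lambda=(1,1)$ and the tableau with both entries equal to $1$, one has $\lambda^{(1)}=\lambda^{(2)}=(1,1)$, the offending diagonal pair sits at $\alpha=1>\ell(\lambda^{(0)})=0$ and is simply absent from the product, and the formula evaluates to $1$, not $0$. The extended range is the reading the paper itself uses in \S\ref{subsec:prf:b-psi} (products over $1\le\alpha\le\beta\le\ell(\lambda)$), and for genuine horizontal strips it does not change $\psi_{\lambda/\mu}$, so it is clearly the intended one; under it your two counts are exactly right and the proof goes through as written. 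With that convention spelled out, your argument is a complete substitute for the omitted proof.
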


%%%%%%%%%%%%%%%%%%%%%%%%%%%%%%%%%%%%%%%%%%%%%%%%%%
%%%%%%%%%%%%%%%%%%%%%%%%%%%%%%%%%%%%%%%%%%%%%%%%%%
\subsection{Tableau sum formula and $K_n(x,z;q,t)$}

Now we investigate the relationship between the function 
$K_n(x,z;q,t)$ and the tableau formula of Macdonald polynomial.
We fix a natural number $m$ and 
consider the case $x=(x_1,\ldots,x_m)$.

In order to state the main result, 
we need to consider the composition of the specialization maps
$\varphi_\lambda^{(p)}$ of \eqref{eq:special}.
For a partition $\lambda=(\lambda_1,\ldots,\lambda_l)$ of $n$ 
and $\zeta\in\bbF$, 
we define the map $\widetilde{\varphi}^{(\zeta)}_\lambda$ by
\begin{align}\label{eq:zeta}
\begin{array}{l c c l}
\widetilde{\varphi}^{(\zeta)}_\lambda\seteq
\varphi_{(l)}^{(\zeta)}\circ\varphi_\lambda^{(q^{-1})} : 
&\bbF(z_1,\ldots,z_n)     &\longrightarrow&\bbF(y)\\
&f(z_1,\ldots,z_n)&\mapsto&f(y,q^{-1} y,\ldots,q^{-(\lambda_1-1)}y,\\
& & &\hskip 1em \zeta y,q^{-1}\zeta y,\ldots,q^{-(\lambda_2-1)}\zeta y,\\
& & &\hskip 1em \ldots,\\
& & &\hskip 1em \zeta^{l-1}y,q^{-1}\zeta^{l-1}y,
                \ldots,q^{-(\lambda_l-1)}\zeta^{l-1}y).
\end{array}
\end{align}
Here the map $\varphi_{(l)}^{(\zeta)}$ denotes
the substitution 
$\varphi_{(l)}^{(\zeta)}g(y_1,\ldots,y_{l})=g(y,\zeta y,\ldots,\zeta^{l-1} y)$.

\begin{thm}\label{thm:tableau}
For partitions $\mu,\lambda$ of $n$,
$\widetilde{\varphi}^{(\zeta)}_\lambda(F_\mu / F_\lambda)$ is regular at 
$\zeta=t$ and its value is $\delta_{\lambda,\mu}$.
\end{thm}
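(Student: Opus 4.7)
The strategy combines the Gordon-filtration vanishings of $F_\mu$ with an asymptotic analysis of the explicit formula for $K_n(x,z;q,t)$ near $\zeta=t$. The key observation is that the substitution $z_\alpha\mapsto q^{-(c_\alpha-1)}t^{r_\alpha-1}y$ (where $(r_\alpha,c_\alpha)$ is the position of the $\alpha$-th box of $\lambda$ in English reading order) admits a dual factorization: not only the one built into the definition, $\widetilde\varphi^{(t)}_\lambda=\varphi^{(t)}_{(\ell(\lambda))}\circ\varphi^{(q^{-1})}_\lambda$, but also, when acting on the symmetric elements of $\calA$, the column-by-column version $\widetilde\varphi^{(t)}_\lambda=\varphi^{(q^{-1})}_{(\ell(\lambda'))}\circ\varphi^{(t)}_{\lambda'}$ obtained by permuting the intermediate variables. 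Pairing each factorization with one half of the Gordon filtration $F_\mu\in\calA_{n,\mu}^{(q^{-1})}\cap\calA_{n,\mu'}^{(t)}$ from \cite[Theorem 1.19]{FHHSY:2009}, I deduce: $\widetilde\varphi^{(\zeta)}_\lambda(F_\mu)\equiv 0$ in $\zeta$ whenever $\lambda\not\le\mu$, and $\widetilde\varphi^{(t)}_\lambda(F_\mu)=0$ whenever $\mu\not\le\lambda$. For $\mu\ne\lambda$ at least one of these conditions always holds, so the numerator of the ratio vanishes at $\zeta=t$.

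To control the denominator and thereby establish regularity, I would apply $\widetilde\varphi^{(\zeta)}_\lambda$ to the explicit formula of Proposition \ref{prp:K} and analyze its asymptotics as $\zeta\to t$. Under the substitution, the sum over $(i_1,\ldots,i_n)\in\{1,\ldots,m\}^n$ is naturally indexed by tableaux $T\in\Tb{\lambda;m}$; an inspection of the zeros of $\gamma_{i_\alpha,i_\beta}(\tilde z_\alpha,\tilde z_\beta;q,t)$ at $\zeta=t$ reveals a common leading factor $(1-\zeta/t)^{d_\lambda}$ present for every $T$ (coming from factors attached to diagonally adjacent boxes), plus additional zeros precisely when $T$ fails to be semistandard. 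For $T\in\SSTb{\lambda;m}$, regrouping the surviving $\gamma$-factors by the pair of rows to which $\alpha$ and $\beta$ belong and matching against the factorized formula \eqref{eq:tableau:formula} should identify the leading coefficient as $\psi_T(q,t)\cdot C_\lambda(q,t;y)$ with a $T$-independent nonzero $C_\lambda$. Using the tableau sum formula $P_\lambda(x;q,t)=\sum_{T\in\SSTb{\lambda;m}} x^T\psi_T(q,t)$, one arrives at
\[
\widetilde\varphi^{(\zeta)}_\lambda\bigl(K_n(x,z;q,t)\bigr)
=(1-\zeta/t)^{d_\lambda}\,C_\lambda(q,t;y)\bigl(P_\lambda(x;q,t)+O(1-\zeta/t)\bigr).
\]

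Matching this with the basis decomposition $\widetilde\varphi^{(\zeta)}_\lambda(K_n)=\sum_\mu Q_\mu(x)\widetilde\varphi^{(\zeta)}_\lambda(F_\mu)$, using $Q_\lambda=b_\lambda P_\lambda$ and the linear independence of $\{Q_\mu\}_{\mu\vdash n}$, forces $\widetilde\varphi^{(\zeta)}_\lambda(F_\lambda)=(1-\zeta/t)^{d_\lambda}C_\lambda(q,t;y)/b_\lambda+O((1-\zeta/t)^{d_\lambda+1})$ and $\widetilde\varphi^{(\zeta)}_\lambda(F_\mu)=O((1-\zeta/t)^{d_\lambda+1})$ for $\mu\ne\lambda$ (consistent with the vanishings in the first paragraph). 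Hence the ratio is regular at $\zeta=t$ with value $\delta_{\lambda,\mu}$. The main obstacle is the identification $\psi_T\cdot C_\lambda$ of the leading coefficient in the second paragraph: once the common zeros $(1-\zeta/t)^{d_\lambda}$ are extracted from $\prod_{\alpha<\beta}\gamma_{i_\alpha,i_\beta}(\tilde z_\alpha,\tilde z_\beta;q,t)$, the remaining product must be matched, row-pair by row-pair, against the ratios of $f(u)$-values in \eqref{eq:tableau:formula}; the book-keeping is delicate because each $\gamma_{i,j}$ contains three numerator factors and a cubic denominator whose cancellations must be organized carefully.
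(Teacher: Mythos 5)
Your two structural observations are correct as far as they go: the column-wise refactoring of the $\zeta=t$ specialization together with $F_\mu\in\calA_{n,\mu}^{(q^{-1})}\cap\calA_{n,\mu'}^{(t)}$ does force $\widetilde{\varphi}^{(\zeta)}_\lambda(F_\mu)$ to vanish at $\zeta=t$ for every $\mu\neq\lambda$, and the linear-independence matching against $K_n=\sum_\mu Q_\mu F_\mu$ is a legitimate device for controlling the denominator indirectly (the paper instead controls $F_\lambda$ directly, via the closed-form specialization of $\varphi^{(q^{-1})}_\lambda\ep_{\lambda'}$ quoted from \cite{FHHSY:2009}). But neither of these settles the theorem, because $\widetilde{\varphi}^{(\zeta)}_\lambda(F_\lambda)$ itself vanishes at $\zeta=t$ (to order $d_\lambda>0$ as soon as $\lambda_2\ge 2$), so first-order vanishing of the numerator proves nothing about regularity or the value of the ratio. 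Everything therefore rests on the quantitative claims of your second paragraph: (i) each specialized $\gamma_T$, $T\in\Tbr{\lambda;m}$, vanishes to order at least $d_\lambda$; (ii) the order is strictly larger (or the order-$d_\lambda$ coefficient is zero) unless $T\in\SSTb{\lambda;m}$; (iii) for semistandard $T$ the leading coefficient is $\psi_T(q,t)$ times a $T$-independent nonzero factor. You leave (iii) --- and in effect (ii) --- as ``the main obstacle,'' but these are precisely the substance of the theorem: they are what the paper establishes in Proposition \ref{prp:b-psi} (by splitting $\gamma_T/F_\lambda=(\gamma_T/\gamma_D)\cdot(\gamma_D/F_\lambda)$, which stays finite and is evaluated in closed form through the $q$-Pochhammer identities \eqref{eq:+:tri}--\eqref{eq:-:2}, yielding $\psi_T$ and $b_\lambda$) together with Proposition \ref{prp:tableau}. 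As written, your proposal sketches the right architecture but omits its computational core, so it does not yet prove regularity or the value $\delta_{\lambda\mu}$.

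There is also a concrete inaccuracy in the mechanism you propose for the common zero: it is not true that the factor $(1-\zeta/t)^{d_\lambda}$ comes from the diagonally adjacent boxes ``for every $T$.'' For a row-nondecreasing but non-semistandard $T$, a diagonal pair with equal entries contributes $\ep_2$ evaluated at the ratio $q t^{-1}$, which does not vanish at $\zeta=t$; the bound $\ge d_\lambda$ is restored only because every such lost diagonal zero is compensated by a zero coming from the vertically adjacent pair (whose entries are then weakly decreasing), and the strict excess needed in (ii) for non-semistandard $T$ requires a separate combinatorial argument of exactly the same nature as Proposition \ref{prp:tableau}. Until (i)--(iii) are actually carried out --- essentially redoing \S\ref{subsec:prf:b-psi} --- the asymptotic expansion of $\widetilde{\varphi}^{(\zeta)}_\lambda(K_n)$ on which your conclusion depends remains an assertion rather than a proof.
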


%To prove Theorem \ref{thm:tableau}, we use 
Our proof uses
the tableau sum formula of $P_\lambda(x;q,t)$.
%Rewrite the statement of this theorem 
Let us express the statement as
\begin{align*}
\lim_{\zeta\to t}\widetilde{\varphi}^{(\zeta)}_\lambda
\dfrac{F_\mu(z;q,t)}{F_\lambda(z;q,t)}
=\delta_{\lambda,\mu}.
\end{align*}
Then by using Proposition~\ref{prp:K2}, 
it can be rewritten into the next equivalent form.
\begin{align}\label{eq:thm:tableau}
\lim_{\zeta\to t}\widetilde{\varphi}^{(\zeta)}_\lambda 
\dfrac{K_n(x,z;q,t)}{F_\lambda(z;q,t)}
=Q_\lambda(x;q,t).
\end{align}

Regard $T=(i_1,i_2,\ldots,i_n)\in \{1,2,\ldots,m\}^n $
as an element of $\Tb{\lambda;m}$.
For simplicity we set 
%write 
\begin{align*}
\gamma_T(z)\seteq
\prod_{1\le\alpha<\beta\le n}\gamma_{i_\alpha,i_\beta}(z_\alpha,z_\beta;q,t).
\end{align*}
%for simplicity. 
We also use the same symbol for the cases $T\in \Tbr{\lambda;m}$ and
 $T\in \SSTb{\lambda;m}$.
By Proposition \ref{prp:K2}, \eqref{eq:thm:tableau} is equivalent to 
\begin{align*}%\label{eq:thm:tableau:2}
\dfrac{(-1)^{n}}{(1-q)^{n}n!}
\sum_{T\in \Tb{\lambda;m}}
x^{T}
\lim_{\zeta\to t}\widetilde{\varphi}^{(\zeta)}_\lambda 
\dfrac{\gamma_T(z)}{F_\lambda(z;q,t)}
=Q_\lambda(x;q,t).
\end{align*}
It is easy to see from the definition of $\gamma_{i,j}$ that 
all the terms with $T\in \Tb{\lambda;m}\setminus\Tbr{\lambda;m}$
vanish after
the first %substitution 
specialization $\varphi^{(q^{-1})}_\lambda$.
%Namely
Thus we may replace $\sum_{T\in \Tb{\lambda;m}}$
by $\sum_{T\in \Tbr{\lambda;m}}$.

Hence it is enough to show that for $T\in \Tbr{\lambda;m}$ we have
\begin{align*}
\dfrac{(-1)^{n}}{(1-q)^{n}n!}\lim_{\zeta\to t}\widetilde{\varphi}^{(\zeta)}_\lambda 
\dfrac{\gamma_T(z)}{F_\lambda(z;q,t)}=b_\lambda(q,t)\psi_T(q,t).
\end{align*}

We prove this in two steps.
\begin{prp}\label{prp:b-psi}
Let $D\in\SSTb{\lambda;m}$ 
given by $\theta_{i,i}=\lambda_i$ and $\theta_{i,j}=0$ for $i\neq j$.
Then we have
\begin{align}
\label{eq:b}
&\dfrac{(-1)^{n}}{(1-q)^{n}n!}
 \lim_{\zeta\to t}\widetilde{\varphi}^{(\zeta)}_\lambda 
 \dfrac{\gamma_D(z)}{F_\lambda(z;q,t)}=b_\lambda(q,t),\\
\label{eq:psi}
&\lim_{\zeta\to t}\widetilde{\varphi}^{(\zeta)}_\lambda 
 \dfrac{\gamma_T(z)}{\gamma_D(z)}=\psi_T(q,t).
\end{align}
\end{prp}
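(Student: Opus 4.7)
The plan is to prove both identities by exploiting the explicit product form of $\gamma_T(z)$ recorded in (\ref{eq:gamma-omega}) together with the representation (\ref{eq:ft}) of $F_\lambda$, and then matching the resulting expressions to the combinatorial data on the right-hand sides by a direct computation of the two stages of $\widetilde{\varphi}^{(\zeta)}_\lambda$ at $\zeta=t$.

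For (\ref{eq:b}), the key observation is that $\gamma_D(z)$ is precisely the unsymmetrized integrand of the star product $\ep_\lambda(z;t)=\ep_{\lambda_1}*\cdots*\ep_{\lambda_{\ell(\lambda)}}$. Grouping $\{1,\ldots,n\}$ into blocks $R_1,\ldots,R_{\ell(\lambda)}$ with $|R_k|=\lambda_k$ corresponding to the rows of $D$, one has
\[
\gamma_D(z)=\prod_{k}\ep_{\lambda_k}(z_{R_k};t)\prod_{k<k'}\prod_{\alpha\in R_k,\,\beta\in R_{k'}}\omega(z_\alpha,z_\beta;q^{-1},t,qt^{-1}).
\]
Under $\varphi^{(q^{-1})}_\lambda$ each $z_\alpha\in R_k$ becomes $q^{-a_\alpha}y_k$ with $a_\alpha\in\{0,\ldots,\lambda_k-1\}$, so the within-row $\ep_2$ factors collapse to explicit $q,t$-rational products, and the cross-row $\omega$ factors become products indexed by pairs of hook positions in rows $k<k'$ of $\lambda$. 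After the second substitution $y_k\mapsto\zeta^{k-1}y$ and sending $\zeta\to t$, the resulting expression factorizes box by box over $\lambda$. In parallel, $\widetilde{\varphi}^{(\zeta)}_\lambda F_\lambda$ is evaluated from (\ref{eq:ft}): most summands in $\sum_{\mu\geq\lambda}$ are killed by $\varphi^{(q^{-1})}_\lambda$, and the surviving contributions combine via $\iota(P_\lambda)=F_\lambda$ together with the classical principal-specialization formula for Macdonald polynomials to produce a dual factorization. Taking the ratio and simplifying with the arm-leg identity (\ref{eq:b_fact}) then yields $b_\lambda(q,t)$.

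For (\ref{eq:psi}), no $F_\lambda$ appears, and we need only the ratio of two explicit products. Writing $T$ as the chain $\emptyset=\lambda^{(0)}\subset\cdots\subset\lambda^{(m)}=\lambda$ and grouping the $z$-variables by the row of $\lambda$ they occupy, for each ordered pair $(\alpha,\beta)$ the ratio $\gamma_{i_\alpha,i_\beta}(z_\alpha,z_\beta;q,t)/\gamma_{\mathrm{row}(\alpha),\mathrm{row}(\beta)}(z_\alpha,z_\beta;q,t)$ is an explicit $q,t$-rational function whose value under the principal substitution $z_\alpha\mapsto q^{-a_\alpha}t^{\mathrm{row}(\alpha)-1}y$ depends only on the tableau labels $i_\alpha,i_\beta$ and on the row data of $\lambda$. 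Telescoping the product of these ratios according to the filtration index $k$ and collecting all factors attached to the skew shape $\lambda^{(k)}/\lambda^{(k-1)}$, one recovers $\prod_k \psi_{\lambda^{(k)}/\lambda^{(k-1)}}(q,t)$ via (\ref{eq:tableau:formula}), with the building block $f(u)=(tu;q)_\infty/(qu;q)_\infty$ arising naturally from ratios of $\omega$ factors at the principal specialization.

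The main obstacle is (\ref{eq:b}): one needs the sum in (\ref{eq:ft}) over $\mu\geq\lambda$ to collapse cleanly under $\widetilde{\varphi}^{(t)}_\lambda$, and the resulting factorization must be matched against the arm-leg form of $b_\lambda(q,t)$. Handling the interplay between the $q$ and $q^{-1}$ conventions in (\ref{eq:zeta}) and the Macdonald principal-specialization identity requires careful bookkeeping, and this is where the bulk of the argument will lie; by contrast (\ref{eq:psi}) reduces to a finite check on $q,t$-rational functions once the telescoping pattern is identified.
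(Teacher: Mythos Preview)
Your outline for (\ref{eq:psi}) is essentially the paper's own: write $\gamma_T/\gamma_D$ as an explicit product of $\gamma_\pm$ ratios, apply $\widetilde{\varphi}^{(\zeta)}_\lambda$, and telescope in the filtration index $k$ to recover $\prod_k\psi_{\lambda^{(k)}/\lambda^{(k-1)}}$ via (\ref{eq:tableau:formula}). That part is fine.

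The gap is in (\ref{eq:b}), specifically in how you propose to evaluate $\widetilde{\varphi}^{(\zeta)}_\lambda F_\lambda$. You choose the expansion (\ref{eq:ft}) of $F_\lambda$ in terms of $\ep_\mu(z;t)$ with $\mu\ge\lambda$, and then assert that ``most summands in $\sum_{\mu\ge\lambda}$ are killed by $\varphi^{(q^{-1})}_\lambda$''. This is not justified: the vanishing $\varphi^{(q^{-1})}_\lambda\ep_\mu=0$ is a statement about the Gordon filtration for the parameter $q^{-1}$, and it applies to $\ep_\mu(z;q)$, not to $\ep_\mu(z;t)$. There is no reason for $\varphi^{(q^{-1})}_\lambda$ to annihilate $\ep_\mu(z;t)$ when $\mu>\lambda$, and in general it does not. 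Your subsequent appeal to ``the classical principal-specialization formula for Macdonald polynomials'' via $\iota(P_\lambda)=F_\lambda$ is also not an argument: $\iota$ is a ring isomorphism, but $\widetilde{\varphi}^{(\zeta)}_\lambda$ is a substitution on the $\calA$ side that has no evident counterpart as a specialization of the $x$-variables in $\Lambda_\bbF$, so you cannot simply transport the Macdonald principal evaluation through $\iota$.

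The fix is to switch to the other representation of $F_\lambda$, namely (\ref{eq:fq}), which expands in $\ep_\mu(z;q)$ over $\mu\ge\lambda'$. Now the specialization parameter matches, and by \cite[Proposition~2.19]{FHHSY:2009} one has $\varphi^{(q^{-1})}_\lambda\ep_\mu(z;q)=0$ unless $\mu\le\lambda'$; combined with $\mu\ge\lambda'$ only the single term $\mu=\lambda'$ survives, and $\varphi^{(q^{-1})}_\lambda\ep_{\lambda'}(z;q)$ has an explicit product form. This is exactly what the paper does: it computes $\varphi^{(q^{-1})}_\lambda\gamma_D$ and $\varphi^{(q^{-1})}_\lambda\ep_{\lambda'}(z;q)$ as parallel products of $\ep_2$ and $\omega$ factors, takes their ratio, and reduces via the elementary identities for $\gamma_\pm$ to the arm--leg form (\ref{eq:b_fact}) of $b_\lambda(q,t)$. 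Your observation that $\gamma_D$ is the unsymmetrized integrand of $\ep_\lambda(z;t)$ is correct and useful for the numerator, but for the denominator you must use (\ref{eq:fq}), not (\ref{eq:ft}).
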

\begin{proof}
The proof is postponed until \S \ref{subsec:prf:b-psi}.
\end{proof}

%%%%%%%%%%%%%%%%%%%%%%%%%%%%%%%%%%%%%%%%%%%%%%%%%%
%%%%%%%%%%%%%%%%%%%%%%%%%%%%%%%%%%%%%%%%%%%%%%%%%%
\section{Ding-Iohara algebra and kernel function}

In this section all objects are defined on $\widetilde{\bbF}\seteq\bbQ(q^{1/2},t^{1/2})$. 
We will also use $p\seteq q/t$.

\subsection{Review of the Ding-Iohara algebra $\calU(q,t)$}
Recall that the Ding-Iohara algebra \cite{DI:1997} was introduced 
as a generalization of the quantum affine algebra, 
which respects the structure of the Drinfeld coproduct. 
In \cite[Appendix A]{FHHSY:2009}, 
the authors introduced a version $\calU(q,t)$ of the Ding-Iohara algebra 
having two parameters $q$ and $t$.

\begin{dfn}\label{dfn:uqt}
Set
\begin{align*}
g(z)\seteq\dfrac{G^+(z)}{G^-(z)},\qquad
G^\pm(z)\seteq(1-q^{\pm1}z)(1-t^{\mp 1}z)(1-q^{\mp1}t^{\pm 1}z).
\end{align*}
Then we define $\calU(q,t)$ to be a unital associative algebra generated 
by the Drinfeld currents 
\begin{align*}
x^\pm(z)=\sum_{n\in \bbZ}x^\pm_n z^{-n},\qquad 
\psi^\pm(z)=\sum_{\pm n\in \bbN}\psi^\pm_n z^{-n},
\end{align*}
and the central element $\gamma^{\pm 1/2}$, satisfying the defining relations
\begin{align*}
\begin{array}{ll}
 \psi^\pm(z) \psi^\pm(w)= \psi^\pm(w) \psi^\pm(z),
&\psi^+(z)\psi^-(w)=
 \dfrac{g(\gamma^{+1} w/z)}{g(\gamma^{-1}w/z)}\psi^-(w)\psi^+(z),\\
 \psi^+(z)x^\pm(w)=g(\gamma^{\mp 1/2}w/z)^{\mp1} x^\pm(w)\psi^+(z),
&\psi^-(z)x^\pm(w)=g(\gamma^{\mp 1/2}z/w)^{\pm1} x^\pm(w)\psi^-(z),\\
\multicolumn{2}{c}{\parbox{15cm}{
$[x^+(z),x^-(w)]=\dfrac{(1-q)(1-1/t)}{1-q/t}
\big( \delta(\gamma^{-1}z/w)\psi^+(\gamma^{1/2}w)-
\delta(\gamma z/w)\psi^-(\gamma^{-1/2}w) \big)$,}}\\
G^{\mp}(z/w)x^\pm(z)x^\pm(w)=G^{\pm}(z/w)x^\pm(w)x^\pm(z).
\end{array}
\end{align*}
\end{dfn}

\begin{fct}[{\cite[Proposition A.2]{FHHSY:2009}}]
The algebra $\calU(q,t)$ has a Hopf algebra structure with
\\
Coproduct $\Delta$:
\begin{align*}
\begin{array}{ll}
 \Delta(\gamma^{\pm 1/2})=\gamma^{\pm 1/2} \otimes \gamma^{\pm 1/2},
&
 \Delta (x^+(z))=
x^+(z)\otimes 1+
\psi^-(\gamma_{(1)}^{1/2}z)\otimes x^+(\gamma_{(1)}z),
\\
\Delta (\psi^\pm(z))=
\psi^\pm (\gamma_{(2)}^{\pm 1/2}z)\otimes \psi^\pm (\gamma_{(1)}^{\mp 1/2}z),
&\Delta (x^-(z))=
x^-(\gamma_{(2)}z)\otimes \psi^+(\gamma_{(2)}^{1/2}z)+1 \otimes x^-(z),
\end{array}
\end{align*}
where $\gamma_{(1)}^{\pm 1/2}=\gamma^{\pm 1/2}\otimes 1$
and $\gamma_{(2)}^{\pm 1/2}=1\otimes \gamma^{\pm 1/2}$.\\
Counit $\ve$:
\begin{align*}
\ve(\gamma^{\pm 1/2})=1,\qquad \ve(\psi^\pm(z))=1,
\qquad \ve(x^\pm(z))=0.
\end{align*}
Antipode $a$:
\begin{align*}
\begin{array}{ll}
 a(\gamma^{\pm 1/2})=\gamma^{\mp 1/2},
&a(x^+(z))=-\psi^-(\gamma^{-1/2}z)^{-1}x^+(\gamma^{-1} z),
\\
 a(\psi^{\pm}(z))=\psi^{\pm}(z)^{-1},
&a(x^-(z))=-x^-(\gamma^{-1} z)\psi^+(\gamma^{-1/2}z)^{-1}.
\end{array}
\end{align*}
\end{fct}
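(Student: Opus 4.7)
The strategy is to verify the Hopf algebra axioms for the prescribed $(\Delta,\ve,a)$ on the Drinfeld generators of $\calU(q,t)$, in four steps: (a) $\Delta$ descends to an algebra homomorphism $\calU(q,t)\to\calU(q,t)^{\otimes 2}$; (b) $\ve$ is an algebra homomorphism and satisfies the counit axiom; (c) $\Delta$ is coassociative; (d) $a$ is an anti-algebra homomorphism and satisfies the antipode identity.

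Steps (b) and (c) are essentially bookkeeping. For (b), each defining relation of Definition \ref{dfn:uqt} specializes under $\ve$ to a trivial identity (since $\ve(\psi^\pm)=1$ and $\ve(x^\pm)=0$), and the counit axiom $(\ve\otimes\mathrm{id})\Delta=\mathrm{id}=(\mathrm{id}\otimes\ve)\Delta$ can be read off directly from the coproduct formulas. For (c), one expands $(\Delta\otimes\mathrm{id})\Delta(X)$ and $(\mathrm{id}\otimes\Delta)\Delta(X)$ on each generator $X$ and matches the $\gamma_{(i)}^{\pm 1/2}$ factors, using that $\gamma^{\pm 1/2}$ is central and grouplike; the two triple-tensor expressions then agree generator-by-generator.

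The bulk of the work is step (a): substitute the formulas for $\Delta(\psi^\pm(z))$ and $\Delta(x^\pm(z))$ into each defining relation and verify it holds in $\calU(q,t)^{\otimes 2}$. The $\psi$-$\psi$ and $\psi$-$x^\pm$ relations factor tensor-wise and reduce to their counterparts inside each slot; the $\gamma$-shifts in the coproduct are calibrated precisely so that the scalar function $g$ produces the correct prefactor on each side. The quadratic relation $G^\mp(z/w)x^\pm(z)x^\pm(w)=G^\pm(z/w)x^\pm(w)x^\pm(z)$ is handled similarly: the two-term expansion of $\Delta(x^\pm)$ yields four cross-terms whose $\gamma$-shifts conspire, via the $\psi^\mp$-$x^\pm$ commutation, to reduce the relation to its scalar counterpart. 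The main obstacle is the $[x^+,x^-]$ relation. Expanding
\[
[\Delta(x^+(z)),\Delta(x^-(w))]
=[x^+(z)\otimes 1+\psi^-(\gamma_{(1)}^{1/2}z)\otimes x^+(\gamma_{(1)}z),\;
  x^-(\gamma_{(2)}w)\otimes\psi^+(\gamma_{(2)}^{1/2}w)+1\otimes x^-(w)]
\]
yields four terms; two of them must cancel after repeated use of the $\psi$-$x^\pm$ relations, while the remaining two produce delta-function contributions from the original $[x^+,x^-]$ commutator. The delicate step is recognizing that these surviving contributions repackage into $\tfrac{(1-q)(1-1/t)}{1-q/t}\bigl(\delta(\gamma^{-1}z/w)\Delta(\psi^+(\gamma^{1/2}w))-\delta(\gamma z/w)\Delta(\psi^-(\gamma^{-1/2}w))\bigr)$, which requires careful tracking of $\gamma$-shifts on both tensor factors and an appeal to $\Delta(\psi^\pm(z))=\psi^\pm(\gamma_{(2)}^{\pm 1/2}z)\otimes\psi^\pm(\gamma_{(1)}^{\mp 1/2}z)$ to recognize the right-hand side.

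Finally, step (d) is verified by first checking that $a$ respects each defining relation with multiplication reversed (the $[x^+,x^-]$ case is again the most intricate because $a(x^\pm)$ involves inverses of $\psi^\mp$-currents), and then confirming the antipode identity on the Drinfeld generators: on $\psi^\pm$ it reduces to $\psi^\pm\cdot(\psi^\pm)^{-1}=1$, and on $x^\pm(z)$ the two summands of $\Delta(x^\pm)$ produce a telescoping cancellation after applying the explicit formula $a(\psi^\mp(\gamma^{\pm 1/2}z))=\psi^\mp(\gamma^{\pm 1/2}z)^{-1}$.
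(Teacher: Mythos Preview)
The paper does not prove this statement at all: it is recorded as a \emph{Fact} with a citation to \cite[Proposition~A.2]{FHHSY:2009}, and no argument is given in the present paper. So there is no ``paper's proof'' to compare against; your proposal supplies what the paper simply imports.

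Your outline is the standard direct verification used for Drinfeld-type coproducts (as in Ding--Iohara's original construction) and is correct in its overall structure. One small imprecision in your description of step~(a): in the expansion of $[\Delta(x^+(z)),\Delta(x^-(w))]$ into four commutators, it is not quite that ``two of them cancel.'' Rather, the term $[x^+(z)\otimes 1,\,1\otimes x^-(w)]$ vanishes trivially, and the mixed term $[\psi^-(\gamma_{(1)}^{1/2}z)\otimes x^+(\gamma_{(1)}z),\,x^-(\gamma_{(2)}w)\otimes\psi^+(\gamma_{(2)}^{1/2}w)]$ vanishes because the scalar $g$-factors coming from the $\psi^-$--$x^-$ and $\psi^+$--$x^+$ commutations in the two tensor slots are reciprocal (this is exactly what the $\gamma$-shifts in $\Delta$ are calibrated to achieve). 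The remaining two terms then produce the two delta-function pieces, and matching them against $\Delta(\psi^\pm)$ is as you describe. With that clarification, your plan is sound.
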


%%%%%%%%%%%%%%%%%%%%%%%%%%%%%%%%%%%%%%%%
\subsection{Level one representation of $\calU(q,t)$}

We say a representation of $\calU(q,t)$ is of level $k$, 
if the central element $\gamma$ is realized by the constant 
$(t/q)^{k/2}=p^{-k/2}$.
%We can construct a Fock representation $\rho_c(\cdot)$ of level one as follows.

\begin{fct}[{\cite[Proposition A.6]{FHHSY:2009}}]\label{fct:level1rep}
Consider the Heisenberg Lie algebra $\frakh$ over $\bbF$ 
with the generators $a_n$ ($n\in\bbZ$) and the relations
\begin{align}\label{eq:boson_macdonald}
[a_m,a_n] = m\dfrac{1-q^{|m|}}{1-t^{|m|}}\delta_{m+n,0}\, a_0.
\end{align}
Let $\frakh^{\ge0}$ (resp. $\frakh^{<0}$) be the subalgebra 
generated by $a_n$ for $n\ge 0$ (resp. $n< 0$). 
Consider the one dimensional representation $\widetilde{\calF}$ 
of $\frakh^{\ge0}$, where $a_n$ ($n>0$) acts trivially and $a_0$ 
acts by some fixed element of $\widetilde{\calF}$.
Then one has the induced Fock representation 
$\calF\seteq\Ind_{\frakh^{\ge0}}^\frakh\widetilde{\bbF}$ of $\frakh$.
Let us also introduce the following four vertex operators \cite[(1.7),(3.23),(3.27),(3.28)]{FHHSY:2009}.
\begin{align*}
&\eta(z)\seteq 
\exp\Big( \sum_{n>0} \dfrac{1-t^{-n}}{n}a_{-n} z^{n} \Big)
\exp\Big(-\sum_{n>0} \dfrac{1-t^{n} }{n}a_n    z^{-n}\Big),
\\
&\xi(z) \seteq
\exp\Big(-\sum_{n>0} \dfrac{1-t^{-n}}{n}p^{-n/2}a_{-n} z^{n}\Big)
\exp\Big(\sum_{n>0}  \dfrac{1-t^{n}}{n} p^{-n/2} a_n z^{-n}\Big),
\\
&\varphi^+(z) \seteq
\exp\Big(-\sum_{n>0} \dfrac{1-t^{n}}{n} (1-p^{-n})p^{n/4} a_n z^{-n}
    \Big),
\\
&\varphi^-(z) \seteq
\exp\Big(\sum_{n>0} \dfrac{1-t^{-n}}{n}(1-p^{-n})p^{n/4} a_{-n} z^{n}
     \Big).
\end{align*}
Then for a fixed $c\in \widetilde{\bbF}^{\times}$,
we have a level one representation $\rho_c(\cdot)$ of $\calU(q,t)$ 
on $\calF$ by setting
\begin{align*}
\rho_c(\gamma^{\pm 1/2})=p^{\mp 1/4},\quad
\rho_c(\psi^\pm(z))=\varphi^\pm(z),\quad 
\rho_c(x^+(z))=c\, \eta(z),\quad
\rho_c(x^-(z))=c^{-1} \xi(z).
\end{align*}
\end{fct}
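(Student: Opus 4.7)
The plan is to verify directly that the four vertex operators $\eta(z)$, $\xi(z)$, $\varphi^\pm(z)$ satisfy the defining relations of $\calU(q,t)$ when the Drinfeld currents are substituted as prescribed. This is the standard free-field verification for Ding--Iohara type algebras, and it is entirely bosonic bookkeeping once the OPEs have been computed.

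First, I would compute the normal-ordered product of every ordered pair among the four operators, using the Heisenberg commutation relation \eqref{eq:boson_macdonald} together with the identity $e^{A}e^{B}=e^{[A,B]}e^{B}e^{A}$ valid when $[A,B]$ is central. For each pair the contraction reduces to a sum of the form $\sum_{n>0}\tfrac{1}{n}c^{(\alpha,\beta)}_n(w/z)^n$, where $c^{(\alpha,\beta)}_n$ is a product of factors $(1-t^{\pm n})$ and $(1-q^n)/(1-t^n)$ scaled by suitable powers of $p$; each such series telescopes to the logarithm of a product of binomials $1-p^{a}t^{b}(w/z)$, so after exponentiation each OPE is an explicit rational function of $w/z$ multiplying a common normal-ordered exponential.

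Second, the $\psi^\pm\psi^\pm$ relations are immediate since the corresponding contractions are symmetric in $z,w$. Setting $\gamma=p^{-1/2}$, the $\psi^+\psi^-$ relation reduces to identifying the computed OPE ratio with $g(\gamma w/z)/g(\gamma^{-1}w/z)$, and the two $\psi^\pm x^\pm$ relations follow by matching with $g(\gamma^{\mp 1/2}w/z)^{\mp 1}$. The quadratic relation $G^\mp(z/w)x^\pm(z)x^\pm(w)=G^\pm(z/w)x^\pm(w)x^\pm(z)$ follows because the $\eta\eta$ (resp.\ $\xi\xi$) contraction produces the rational factor $(z-q^{-1}w)(z-tw)(z-qt^{-1}w)/(z-w)^3$, and swapping $z\leftrightarrow w$ delivers exactly the ratio $G^-(z/w)/G^+(z/w)$.

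Third, the hardest step is the $[x^+(z),x^-(w)]$ relation. The product $\eta(z)\xi(w)$ equals a normal-ordered expression times a rational prefactor whose only singularities are simple poles at $z=p^{1/2}w$ and $z=p^{-1/2}w$; the reversed product $\xi(w)\eta(z)$ has the same normal-ordered piece with the prefactor continued from the opposite region, so the commutator localizes to the residues at these two poles. At $z=p^{1/2}w$ the normal-ordered exponential specializes exactly to $\varphi^+(\gamma^{1/2}w)$, producing the formal delta $\delta(\gamma^{-1}z/w)$, and at $z=p^{-1/2}w$ one obtains $\varphi^-(\gamma^{-1/2}w)$ against $\delta(\gamma z/w)$ with the opposite sign. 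The main obstacle is the careful bookkeeping of the $p$-shifts built into the definitions of $\xi$ and $\varphi^\pm$: one must check that substituting $z=p^{\pm 1/2}w$ into the bosonic exponent of $\eta(z)\xi(w)$ precisely reproduces the exponent of $\varphi^\pm$, and that the residue of the rational prefactor collapses to the universal scalar $(1-q)(1-1/t)/(1-q/t)$ with the correct sign at each pole.
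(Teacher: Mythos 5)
Your overall strategy is the right one, and indeed the only one available: the paper does not prove this statement at all, but imports it as a Fact from \cite[Proposition A.6]{FHHSY:2009}, so a direct free-field verification of the relations of Definition \ref{dfn:uqt} (pairwise contractions via $e^Ae^B=e^{[A,B]}e^Be^A$, then delta-function calculus for the $[x^+,x^-]$ relation) is exactly what is required, and your outline of which relations are easy and which need care is sound. However, two of your concrete intermediate claims are wrong as stated. First, the $\eta\eta$ contraction is not the cubic $\omega$-type factor $(z-q^{-1}w)(z-tw)(z-qt^{-1}w)/(z-w)^3$: from \eqref{eq:boson_macdonald} one gets
\begin{align*}
\eta(z)\eta(w)=\dfrac{(1-\tfrac{w}{z})(1-p\tfrac{w}{z})}{(1-q\tfrac{w}{z})(1-t^{-1}\tfrac{w}{z})}\,{:}\eta(z)\eta(w){:}\,,
\end{align*}
which has a \emph{zero} at $z=w$ and simple poles at $z=qw$, $z=t^{-1}w$; the cubic function you quote is the structure function $\gamma_{i,j}$/$\omega$ from the kernel-function side of the paper, not this contraction. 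Your final conclusion for the quadratic relation survives only because the true factor and the cubic one differ by a $z\leftrightarrow w$-symmetric factor, so the exchange ratio is $G^+(z/w)/G^-(z/w)$ either way, but the step as written is incorrect and should be replaced by the actual OPE.

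Second, in the crucial $[x^+(z),x^-(w)]$ computation your pairing of poles with $\varphi^\pm$ and with the deltas is swapped, and as written that step would fail if carried out literally. With $\gamma=p^{-1/2}$, the delta $\delta(\gamma^{-1}z/w)$ is supported at $z=\gamma w=p^{-1/2}w$; at that point the creation halves of $:\eta(z)\xi(w):$ cancel (since $z^n=p^{-n/2}w^n$) and the normal-ordered product collapses to $\varphi^+(\gamma^{1/2}w)=\varphi^+(p^{-1/4}w)$. Dually, at $z=p^{1/2}w$ the annihilation halves cancel and one obtains $\varphi^-(\gamma^{-1/2}w)=\varphi^-(p^{1/4}w)$, matched with $\delta(\gamma z/w)$. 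So the correspondence is ($z=p^{-1/2}w$, $\delta(\gamma^{-1}z/w)$, $\varphi^+$) and ($z=p^{1/2}w$, $\delta(\gamma z/w)$, $\varphi^-$), the opposite of what you wrote. Your identification of the two simple poles of the $\eta(z)\xi(w)$ prefactor and of the universal residue $(1-q)(1-t^{-1})/(1-p)$ with opposite signs at the two poles is correct, and with the pairing fixed the verification goes through.
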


\begin{rmk}\label{rmk:level1rep}
We can rephrase this fact as follows.
Let us define $b_n$'s by the expansion of $\psi^{\pm}$:
\begin{align}\label{eq:psi_boson}
\psi^+(z)=\psi^+_0 
 \exp\left(+\sum_{n>0} b_n \gamma^{n/2} z^{-n} \right) ,
\quad
\psi^-(z)=\psi^-_0 
 \exp\left(-\sum_{n>0} b_{-n} \gamma^{n/2}z^{n} \right).
\end{align}
Then we have
\begin{align}\label{eq:boson_b}
[b_m,b_n]
=\dfrac{1}{m}(1-q^{-m})(1-t^m)(1-p^m)(\gamma^m-\gamma^{-m})
\gamma^{-|m|}\delta_{m+n,0},
\end{align}
and the coproduct for $b_n$ reads
\begin{align}\label{eq:Delta_boson}
\Delta(b_n)=b_n\otimes \gamma^{-|n|}+1\otimes b_n.
\end{align}
Then the representation $\rho_c$ is given by 
$\gamma^{\pm 1/2} \mapsto p^{\mp 1/4}$ and
\begin{align*}
&b_n    \mapsto -\dfrac{1-t^n}{|n|}(p^{|n|/2}-p^{-|n|/2})a_n,
\quad
%\\
%&
\psi_0^{\pm}\mapsto 1,\quad 
x^+(z) \mapsto c\, \eta(z),\quad
x^-(z)\mapsto c^{-1} \xi(z).
\end{align*}
\end{rmk}

\begin{dfn}
Consider the $m$-fold tensor representation 
$\rho_{y_1}\otimes\cdots \otimes\rho_{y_m}$ on $\calF^{\otimes m}$
for $m\in\bbZ_{\geq 2}$.
Define $\Delta^{(m)}$ inductively by
\begin{align*}
\Delta^{(2)}\seteq \Delta,\quad
\Delta^{(m)}\seteq({\rm id}\otimes \cdots 
\otimes{\rm id}\otimes \Delta)\circ \Delta^{(m-1)}.
\end{align*} 
Since we have 
$\rho_{y_1}\otimes\cdots \otimes\rho_{y_m}\Delta^{(m)}(\gamma)=
\gamma_{(1)}\cdots \gamma_{(m)}=p^{-m/2}$, the level is $m$. 
We also define
\begin{align}\label{eq:rhom}
\rho_y^{(m)}\seteq\rho_{y_1}\otimes\cdots \otimes\rho_{y_m}
\circ\Delta^{(m)} : \calU(q,t)\to\calF^{\otimes m}.
\end{align}
\end{dfn}

\begin{lem}\label{lem:tildeLambda}
We have 
\begin{align*}
\rho^{(m)}_y(x^+(z))=\sum_{i=1}^m y_i      \widetilde{\Lambda}_i(z),
\quad
\rho^{(m)}_y(x^-(z))=\sum_{i=1}^m y_i^{-1} \widetilde{\Lambda}^*_i(z),
\end{align*}
where the $\widetilde{\Lambda}_i(z)$, $\widetilde{\Lambda}_i^*(z)$ 
are defined to be
\begin{align}
\label{eq:tildeL}
\widetilde{\Lambda}_i(z)
&\seteq
 \varphi^-(p^{-1/4}z)\otimes\varphi^-(p^{-3/4}z)\otimes\cdots
 \otimes \varphi^-(p^{-(2i-3)/4}z)
 \otimes \eta(p^{-(i-1)/2}z)\otimes 1\otimes \cdots\otimes 1,
\\
\label{eq:tildeL*}
\widetilde{\Lambda}_i^*(z)
&\seteq
 1\otimes \cdots \otimes 1
  \otimes \xi(p^{-(m-i)/2}z)
  \otimes \varphi^+ (p^{-(2m-2i-1)/4}z)\otimes\cdots
  \otimes\varphi^+(p^{-1/4}z),
\end{align}
where $\eta(p^{-(i-1)/2}z)$ and $\xi(p^{-(m-i)/2}z)$ 
sit in the $i$-th tensor component.
\end{lem}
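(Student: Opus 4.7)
The plan is to argue by induction on $m$, using the Hopf structure of Definition \ref{dfn:uqt}. The base case $m=1$ is immediate from Fact \ref{fct:level1rep}: $\rho_{y_1}(x^+(z))=y_1\eta(z)=y_1\widetilde{\Lambda}_1(z)$ and $\rho_{y_1}(x^-(z))=y_1^{-1}\xi(z)=y_1^{-1}\widetilde{\Lambda}_1^*(z)$.

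For the inductive step on $x^+$, I would use $\Delta^{(m)}=(\mathrm{id}^{\otimes(m-2)}\otimes\Delta)\circ\Delta^{(m-1)}$ together with $\Delta(x^+(z))=x^+(z)\otimes 1+\psi^-(\gamma_{(1)}^{1/2}z)\otimes x^+(\gamma_{(1)}z)$ and $\Delta(\psi^-(z))=\psi^-(\gamma_{(2)}^{-1/2}z)\otimes\psi^-(\gamma_{(1)}^{1/2}z)$. An induction yields $\Delta^{(m)}(x^+(z))=\sum_{i=1}^m B_i^{(m)}(z)$, where $B_i^{(m)}(z)$ carries $\psi^-(\gamma_{(j)}^{1/2}\gamma_{(j-1)}\cdots\gamma_{(1)}z)$ in slot $j$ for $1\le j<i$, the factor $x^+(\gamma_{(i-1)}\cdots\gamma_{(1)}z)$ in slot $i$, and $1$ in the remaining slots. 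Under $\rho_{y_1}\otimes\cdots\otimes\rho_{y_m}$, each $\gamma_{(k)}$ evaluates to $p^{-1/2}$, so the $\psi^-$ argument in slot $j$ becomes $p^{-(2j-1)/4}z$ and the $x^+$ argument becomes $p^{-(i-1)/2}z$, matching \eqref{eq:tildeL} and giving $y_i\widetilde{\Lambda}_i(z)$.

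The $x^-$ formula is obtained by the mirror argument using $\Delta(x^-(z))=x^-(\gamma_{(2)}z)\otimes\psi^+(\gamma_{(2)}^{1/2}z)+1\otimes x^-(z)$ and $\Delta(\psi^+(z))=\psi^+(\gamma_{(2)}^{1/2}z)\otimes\psi^+(\gamma_{(1)}^{-1/2}z)$. Iteration gives $\Delta^{(m)}(x^-(z))=\sum_{i=1}^m C_i^{(m)}(z)$, where $C_i^{(m)}(z)$ has $1$ in slots $1,\ldots,i-1$, the factor $x^-(\gamma_{(i+1)}\cdots\gamma_{(m)}z)$ in slot $i$, and $\psi^+(\gamma_{(k)}^{1/2}\gamma_{(k+1)}\cdots\gamma_{(m)}z)$ in slot $k$ for $i<k\le m$. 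These evaluate to $p^{-(m-i)/2}z$ and $p^{-(2(m-k)+1)/4}z$ respectively, recovering \eqref{eq:tildeL*}.

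The main technical obstacle is the bookkeeping of the central-element shifts under iteration. Because $\Delta(\gamma)=\gamma\otimes\gamma$, any $\gamma_{(m-1)}$ occurring in $\Delta^{(m-1)}(\cdot)$ is carried to $\gamma_{(m-1)}\gamma_{(m)}$ by $\mathrm{id}^{\otimes(m-2)}\otimes\Delta$. For $x^+$ this is vacuous since all $\gamma_{(k)}$ in $B_i^{(m-1)}$ have $k\le i-1\le m-2$; for $x^-$ the $\gamma_{(m-1)}$ in $C_i^{(m-1)}$ does split, but the splits combine with the fresh $\gamma_{(1)}^{-1/2}$ and $\gamma_{(2)}^{1/2}$ produced by $\Delta$ at each step to yield the telescoping forms above. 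Once these shifts are properly tracked, the evaluation under $\rho_{y_1}\otimes\cdots\otimes\rho_{y_m}$ follows immediately from Fact \ref{fct:level1rep}.
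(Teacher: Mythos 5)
Your proof is correct and is essentially the argument the paper intends: the paper's own proof is just the one-line citation of \eqref{eq:rhom}, Fact \ref{fct:level1rep} and Remark \ref{rmk:level1rep}, i.e.\ exactly the iterated-coproduct computation you spell out, and your bookkeeping of the central-element splittings (including the telescoping in the $x^-$ case) checks out against the arguments in \eqref{eq:tildeL} and \eqref{eq:tildeL*}. Nothing further is needed.
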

\begin{proof}
By the definition \eqref{eq:rhom}, Fact \ref{fct:level1rep} 
and Remark \ref{rmk:level1rep}.
\end{proof}

%%%%%%%%%%%%%%%%%%%%%%%%%%%%%%%%
\subsection{New currents $t(z)$ and $t^*(z)$}

\begin{dfn}\label{dfn:tt*}
We define
\begin{align}
\label{eq:tt*}
t(z)   \seteq \alpha(z)             x^{+}(z) \beta(z),\quad
t^*(z) \seteq \alpha(p^{-1} z)^{-1}x^{-}(p^{-1} \gamma^{-1}z) 
              \beta(\gamma^{-2}p^{-1} z)^{-1}.
\end{align}
Here we used auxiliary vertex operators
\begin{align}
\label{eq:alpha_beta}
\alpha(z)   \seteq 
 \exp\Big(-\sum_{n=1}^\infty \dfrac{1}{\gamma^n-\gamma^{-n}}
          b_{-n} z^{n}\Big),
\quad
\beta(z)   \seteq 
 \exp\Big(\sum_{n=1}^\infty \dfrac{1}{\gamma^n-\gamma^{-n}}
          b_{n}z^{-n}\Big).
\end{align}
Here the part $1/(\gamma^n-\gamma^{-n})$ is considered 
to be the formal power sum $\sum_{i=0}^\infty \gamma^{-(2i+1)n}$.
\end{dfn}

\begin{rmk}\label{rmk:t*}
The definition of $t^*(z)$ can be read as
\begin{align*}
t^*(\gamma p z) = \alpha(\gamma z)^{-1}x^{-}(z) \beta(\gamma^{-1}z)^{-1}.
\end{align*}
This form is convenient in the actual calculations.
\end{rmk}

\begin{prp}\label{prp:tt*}
(1) The elements $t(z)$ and $t^*(z)$ commutes with 
$\alpha(w)$, $\beta(w)$ and $\psi^{\pm}(w)$:
\begin{align*}
&[t(z),\alpha(w)]=[t(z),\beta(w)]=[t^*(z),\alpha(w)]=[t^*(z),\beta(w)]=0,\\
&[t(z),\psi^{\pm}(w)]=[t^*(z),\psi^{\pm}(w)]=0.
\end{align*}

(2)
Set
\begin{align}
\label{eq:A}
A(z)\seteq \exp\Big(\sum_{n=1}^\infty\dfrac{1}{n}\dfrac{(1-q^n)(1-t^{-n})(1-p^{-n} \gamma^{-2n})}{1-\gamma^{-2n}}z^n\Big),
\end{align}
where the part $1/(1-\gamma^{-2n})$ is considered to be the formal power sum 
$\sum_{i=0}^\infty \gamma^{-2i n}$.
Then we have
\begin{align}
\label{eq:Att}
&A(\tfrac{w}{z})t(z)t(w)-A(\tfrac{z}{w})t(w)t(z)
=\dfrac{(1-q)(1-t^{-1})}{1-p}
 [\delta(p^{-1}\tfrac{w}{z})t^{(2)}(z)-\delta(p \tfrac{w}{z})t^{(2)}(w)],
\\
\label{eq:Att*}
&A(\tfrac{w}{z})t^*(z)t^*(w)-A(\tfrac{z}{w})t^*(w)t^*(z)
=\dfrac{(1-q^{-1})(1-t)}{1-p^{-1}}
 [\delta(p \tfrac{w}{z})t^{*(2)}(z)-\delta(p^{-1}\tfrac{w}{z})t^{*(2)}(w)],
\end{align}
where $\delta(z)\seteq \sum_{n\in\bbN}z^n + z^{-1}\sum_{n\in\bbN}z^{-n}$ 
is the formal delta function, and 
\begin{align*}
&t^{(2)}(z)\seteq 
%\dfrac{(1-w/z)(1-q t^{-1}w/z)}{(1-q w/z)(1-t^{-1}w/z)}
%A(z/w)t(w)t(z)|_{w=q t^{-1}z},
\alpha(p z)\alpha(z)x^+(p z)x^+(z)\beta(p z)\beta(z),
\\
&t^{*(2)}(z)\seteq
\alpha(\gamma p z)^{-1}\alpha(\gamma z)^{-1}x^-(p z)x^-(z)
\beta(\gamma^{-1}p z)^{-1} \beta(\gamma^{-1} z)^{-1}.
\end{align*}

(3)
As in (2), set
\begin{align}
\label{eq:B}
B(z)\seteq 
\exp\Big(\sum_{n=1}^\infty\dfrac{1}{n}
         \dfrac{(1-q^n)(1-t^{-n})(p^{-2n}  \gamma^{-2n} -p^{-n} \gamma^{-2n} )}
               {1-\gamma^{-2n}} z^n
     \Big).
\end{align}
Then 
\begin{align}
\label{eq:Btt*}
B(\tfrac{w}{z})t(z)t^*(w)-B(\gamma^2 p^2\tfrac{z}{w})t^*(w)t(z)
=\dfrac{(1-q)(1-t^{-1})}{1-p}
 \Big(\delta(p^{-1}\tfrac{w}{z})\psi_0^+
     -\delta(\gamma^{-2}p^{-1}\tfrac{w}{z})\psi_0^-)\Big).
\end{align}
\end{prp}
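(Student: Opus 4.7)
My plan is to prove all three parts by direct normal-ordering computations in the level-$m$ Fock module of Remark~\ref{rmk:level1rep}, using only the bosonic commutation \eqref{eq:boson_b} together with the Drinfeld relations of Definition~\ref{dfn:uqt}. The dressing vertex operators $\alpha(z),\beta(z)$ in \eqref{eq:alpha_beta} have been arranged precisely so that the scalar OPE factors arising when $t$'s and $t^*$'s are shuffled past each other collapse exactly into the rational functions $A$ and $B$ on the left-hand sides of \eqref{eq:Att}, \eqref{eq:Att*}, \eqref{eq:Btt*}, with residues at the $\delta$-function supports producing the right-hand sides.

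For part (1), each of $\psi^\pm(w),\alpha(w),\beta(w)$ is an exponential in $b_n$, so its commutator with the $b$-exponentials in $\alpha(z),\beta(z)$ is a central scalar derived from \eqref{eq:boson_b}. Separately, conjugation of $x^+(z)$ by $\psi^\pm(w)$ is a scalar multiple of $x^+(z)$ by the Drinfeld relation $\psi^\pm(w)x^+(z)\psi^\pm(w)^{-1}=g(\gamma^{\mp 1/2}z/w)^{\mp 1}x^+(z)$, and similarly for $\alpha(w),\beta(w)$ via \eqref{eq:psi_boson}. The crucial step is to verify that the coefficient $1/(\gamma^n-\gamma^{-n})$ in the exponents of $\alpha,\beta$ is exactly the telescoping factor that forces the bosonic scalar and the Drinfeld scalar to cancel. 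Equating exponents reduces this to a mode-by-mode rational identity which is straightforward; the case of $t^*$ uses the $x^-$-side Drinfeld relations and the matching $\gamma^{-|n|}$ shifts from \eqref{eq:Delta_boson}.

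For part (2), I expand
\begin{align*}
t(z)t(w)=\alpha(z)x^+(z)\beta(z)\alpha(w)x^+(w)\beta(w),
\end{align*}
move $\beta(z)\alpha(w)$ to the exterior, and absorb the Drinfeld shifts of $\beta(z)$ past $x^+(w)$ and of $x^+(z)$ past $\alpha(w)$ into a single scalar OPE prefactor in front of $\alpha(z)\alpha(w)x^+(z)x^+(w)\beta(z)\beta(w)$. Applying the relation $G^-(z/w)x^+(z)x^+(w)=G^+(z/w)x^+(w)x^+(z)$ converts this ordering to the opposite one, introducing the factor $g(z/w)=G^+(z/w)/G^-(z/w)$. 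Combining, the total scalar between $t(z)t(w)$ and $t(w)t(z)$ is precisely $A(z/w)/A(w/z)$ away from the zeros of $G^-$, which forces the regular part of $A(w/z)t(z)t(w)-A(z/w)t(w)t(z)$ to vanish. The simple zeros of $G^-(z/w)$ at $z/w=p^{\pm 1}$ yield the $\delta$-function residues; at each such point the normal-ordered $x^+(z)x^+(w)$ is regular and specializes to the fused product entering $t^{(2)}$, while the prefactor $(1-q)(1-t^{-1})/(1-p)$ comes out as the limit of $G^+/G^-$. Part (3) runs along the same lines, but uses the Drinfeld commutator $[x^+(z),x^-(w)]$, whose right-hand side is already a difference of two $\delta$-functions multiplied by $\psi^\pm$; after the $\alpha/\beta$ dressing these $\psi^\pm(w)$ factors collapse to the constants $\psi_0^\pm$, and the spectral shifts built into $t^*$ move the $\delta$-function supports to $z/w\in\{p^{-1},\gamma^{-2}p^{-1}\}$.

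The main obstacle is keeping the bookkeeping straight, especially inside $t^*(z)$ where $\alpha(p^{-1}z)^{-1}$, $x^-(p^{-1}\gamma^{-1}z)$, and $\beta(\gamma^{-2}p^{-1}z)^{-1}$ each carry a different spectral shift, and one has to check that the surviving prefactor matches $B$ in \eqref{eq:B} rather than some twisted cousin. I would organize the computation by first proving a general OPE lemma expressing every product of two dressed vertex operators as an explicit scalar times a single normal-ordered product, so that each of \eqref{eq:Att}, \eqref{eq:Att*}, \eqref{eq:Btt*} then follows from one residue extraction matched against the explicit formulas \eqref{eq:A} and \eqref{eq:B} for $A$ and $B$.
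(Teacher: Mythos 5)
Your strategy is essentially the paper's own: for (1), cancellation of the conjugation scalars produced by \eqref{eq:boson_b} against those produced by the $\psi^{\pm}$--$x^{\pm}$ relations of Definition~\ref{dfn:uqt}; for (2), collapsing all scalar factors into a single rational prefactor and then invoking the quadratic relation $G^{\mp}(z/w)x^{\pm}(z)x^{\pm}(w)=G^{\pm}(z/w)x^{\pm}(w)x^{\pm}(z)$ to extract delta functions; for (3), reduction to $[x^{+}(z),x^{-}(w)]$ with the dressed $\psi^{\pm}$ collapsing to $\psi^{\pm}_0$. But one step in (2) is misstated in a way that leaves a gap. The delta supports do not come from ``simple zeros of $G^{-}(z/w)$ at $z/w=p^{\pm1}$'': $G^{-}(x)$ vanishes at $x=q,\ t^{-1},\ p^{-1}$ only. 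What actually happens is that $A(w/z)$ combined with the reordering scalar gives the prefactor $(1-q\tfrac{w}{z})(1-t^{-1}\tfrac{w}{z})\big/\big((1-\tfrac{w}{z})(1-p\tfrac{w}{z})\big)$ in front of $\alpha(z)\alpha(w)x^{+}(z)x^{+}(w)\beta(z)\beta(w)$; after the quadratic relation is used to bring both orderings of $x^{+}x^{+}$ to a common one, the two expansions of this rational function differ by delta terms supported at $w/z=1,\ p,\ p^{-1}$. In particular there is a potential $\delta(w/z)$ contribution which your sketch never addresses; it does not cancel for scalar reasons, but only because the quadratic relation at coincident arguments forces $\delta(\tfrac{w}{z})\,G^{+}(\tfrac{w}{z})\,x^{+}(w)x^{+}(z)=0$ (using $G^{+}(1)\neq0$), and the same relation is needed once more at $w/z=p^{\pm1}$ to convert the surviving terms into the fused products entering $t^{(2)}$ before the coefficient $(1-q)(1-t^{-1})/(1-p)$ can be read off. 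Without this coincident-point argument the residue extraction is incomplete, and the coefficient is not simply ``the limit of $G^{+}/G^{-}$''.

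A second, smaller point is the framing ``in the level-$m$ Fock module'': the proposition is an identity in (a completion of) $\calU(q,t)$ itself, with $\gamma$ central and $1/(\gamma^{n}-\gamma^{-n})$, $1/(1-\gamma^{-2n})$ understood as formal series, so proving it only after applying $\rho^{(m)}_y$ of \eqref{eq:rhom} would establish merely the weaker statements about $T(z),T^{*}(z)$; moreover at level $m$ the current $x^{+}(z)$ acts as a sum of vertex operators, so the genuine normal-ordering analysis there is the content of Propositions~\ref{prp:Lambda} and \ref{prp:TT}, not of this one. Since your computation in fact uses only \eqref{eq:boson_b}, \eqref{eq:psi_boson}, \eqref{eq:alpha_beta} and the defining relations, you should carry it out in the algebra itself, which is exactly what the paper does.
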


\begin{proof}
See \S \ref{subsec:prf:tt*}.
\end{proof}

In the next subsection we show that the currents $t(z)$, $t^*(z)$ are 
connected to the realization of deformed $\calW$ algebra 
in the Fock representation of $\calU(q,t)$.

%%%%%%%%%%%%%%%%%%%%%%%%%%%%%%%%%%%%%%%%%%%%%%%%%%
\subsection{Deformed algebra $\calW_{q,p}(\Liesl_m)$}
We basically follow the description of $\calW_{q,p}(\Liesl_m)$ %given 
in \cite[\S 4]{FF:1996}. As for the connection between the singular vectors of the
$\calW_{q,p}(\Liesl_m)$ and the Macdonald polynomials, 
see \cite{SKAO:1996,AKOS:1996}.

\begin{dfn}Set
\begin{align*}
f_{k,l}(z)\seteq
\exp\Big(\sum_{n=1}^\infty 
         \dfrac{(1-q^n)(1-t^{-n})(p^{(k-1)n}-p^{(l-1)n})}{1-p^{l n}}z^n \Big).
\end{align*} 
\end{dfn}

\begin{rmk}\label{rmk:ABf}
Our functions $A(z)$ and $B(z)$ give special cases of this function 
under $\rho_y^{(m)}$, that is,
\begin{align*}
\rho_y^{(m)}(A(z))=f_{1,m}(z),\quad 
\rho_y^{(m)}(B(z))=f_{m-1,m}(z).
\end{align*}
\end{rmk}

\begin{dfn}
Set
\begin{align}
\label{eq:TT*}
&T(z)=T_1(z)\seteq \rho^{(m)}_y(t(z)),\quad 
 T^*(z)=T_1^*(z)\seteq \rho^{(m)}_y(t^*(z)).
\end{align}
Let us also define
\begin{align}
\label{eq:LL*}
\Lambda_i(z) \seteq
 \rho^{(m)}_y(\alpha(z)) \widetilde{\Lambda}_i(z) 
 \rho^{(m)}_y(\beta(z)),
\quad
\Lambda_i^*(z) \seteq
 \rho^{(m)}_y(\alpha(p^{-1} z)^{-1}) \widetilde{\Lambda}^*_i(p^{(m-2)/2}z) 
 \rho^{(m)}_y(\beta(\gamma^{-2}p^{-1} z)^{-1}).
\end{align}
Then by Definition \ref{dfn:tt*} and Lemma \ref{lem:tildeLambda} we have
\begin{align}\label{eq:TL}
T_1(z)=\sum_{i=1}^m y_i \Lambda_i(z),
\quad
T^*_1(z)=\sum_{i=1}^m y_i^{-1} \Lambda_i^*(z).
\end{align}
For $i=2,\ldots,m$, we further define
\begin{align}
\label{eq:TiL}
&T_i(z) \seteq \sum_{1\le j_1<\cdots<j_i\le m}
 y_{j_1}y_{j_2}\cdots y_{j_i}
 :\Lambda_{j_1}(z)\Lambda_{j_2}(z p)\cdots\Lambda_{j_i}(z p^{i-1}):,
\\
\label{eq:TiL*}
&T_i^*(z) \seteq \sum_{1\le j_1<\cdots<j_i\le m}
 y_{j_1}^{-1}y_{j_2}^{-1}\cdots y_{j_i}^{-1}
:\Lambda_{j_1}^*(z)\Lambda_{j_2}^*(z p^{-1})\cdots\Lambda_{j_i}^*(z p^{-i+1}):.
\end{align}
\end{dfn}

\begin{prp}\label{prp:Lambda}
(1)
The operator product of $\Lambda_i(z)$ and $\Lambda_j(w)$ is given by
\begin{align}
\label{eq:prp:Lambda:1}
f_{1,m}(\tfrac{w}{z})\Lambda_i(z)\Lambda_j(w)
=:\Lambda_i(z)\Lambda_j(w):\times
\begin{cases}
1&i=j,\\
\gamma_+(z,w;q,p)&i<j,\\
\gamma_-(z,w;q,p)&i>j.
\end{cases}
\end{align}
Here we used the symbol
\begin{align}\label{eq:gamma+-}
\gamma_+(z,w;q,t) \seteq
\dfrac{(z-q^{-1} w)(z-q t^{-1}w)}{(z-w)(z-t^{-1}w)},\quad
\gamma_-(z,w;q,t) \seteq
\dfrac{(z-q w)(z-q^{-1} t w)}{(z-w)(z-t w)}.
\end{align}

(2)
We have
\begin{align*}
:\Lambda_1(z)\Lambda_2(p z)\cdots\Lambda_m(p^{m-1}z): =1.
\end{align*}
Therefore $T_m(z)=y_1y_2\cdots y_m$.

(3)
The $\Lambda_i(z)$ and $\Lambda_j^*(z)$ are connected by the following 
equation.
\begin{align}
\label{eq:prp:Lambda:3}
\Lambda_k^*(z)=
:\prod_{i=1}^{k-1}\Lambda_i(p^{k-1}z)\prod_{l=k+1}^m \Lambda_l(p^{l-2}z):.
\end{align}
Thus we also have
\begin{align}
\label{eq:prp:Lambda:3:2}
T^*_1(z)=y_1^{-1}y_2^{-1}\cdots y_m^{-1}T_{m-1}(z).
\end{align}

(4)
The operator product of $\Lambda_i^*(z)$ and $\Lambda_j^*(w)$ is given by
\begin{align}
\label{eq:prp:Lambda:4}
f_{1,m}(\tfrac{w}{z})\Lambda_i^*(z)\Lambda_j^*(w)
=:\Lambda_i^*(z)\Lambda_j^*(w):\times
\begin{cases}
1&i=j,\\
\gamma_-(z,w;q,p)&i<j,\\
\gamma_+(z,w;q,p)&i>j.
\end{cases}
\end{align}

(5)
We have
\begin{align*}
:\Lambda_1^*(z)\Lambda_2^*(p^{-1} z)\cdots\Lambda_m^*(p^{-m+1}z): =1.
\end{align*}
Therefore $T^*_m(z)=y_1^{-1}y_2^{-1}\cdots y_m^{-1}$.
\end{prp}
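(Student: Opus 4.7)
The entire proposition is a computation with free-field vertex operators: all five statements reduce to OPEs among the elementary operators $\eta$, $\xi$, $\varphi^\pm$, $\alpha$, $\beta$ appearing in the definitions \eqref{eq:tildeL}, \eqref{eq:tildeL*}, \eqref{eq:LL*}. My plan is to first compile, once and for all, the contraction table for these five operators using the Heisenberg commutation relation \eqref{eq:boson_macdonald} together with the explicit action of $b_n$ recorded in Remark~\ref{rmk:level1rep}. The key inputs are the scalar prefactors in $\eta(z)\eta(w)$, $\eta(z)\xi(w)$, $\xi(z)\xi(w)$, $\varphi^\pm(z)\eta(w)$, $\varphi^\pm(z)\xi(w)$, and the ``dressing'' contractions $\alpha(z)\eta(w)$, $\beta(z)\eta(w)$, $\alpha(z)\xi(w)$, $\beta(z)\xi(w)$; the role of $\alpha,\beta$ is precisely to cancel the $\varphi^-$-tail from neighboring tensor slots and to replace it by the single clean scalar factor $f_{1,m}(w/z)$ after applying $\rho_y^{(m)}$, via Remark~\ref{rmk:ABf}.

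For part (1), I expand $\Lambda_i(z)\Lambda_j(w)$ slot by slot. In every tensor position $k\neq i,j$ one gets $\varphi^-(\cdot z)\varphi^-(\cdot w)$, which is trivially normally ordered up to a scalar; at slots $i$ and $j$ one gets either $\eta\cdot\varphi^-$, $\varphi^-\cdot\eta$ or $\eta\cdot\eta$. Summing the resulting scalar prefactors with the contribution of $\rho_y^{(m)}(\alpha(z)\beta(w))$ and $\rho_y^{(m)}(\beta(z)\alpha(w))$ (using the geometric-series interpretation of $1/(\gamma^n-\gamma^{-n})$ and the fact that $\gamma=p^{-m/2}$ on the image) should telescope to $f_{1,m}(w/z)^{-1}$ times $1$, $\gamma_+(z,w;q,p)$ or $\gamma_-(z,w;q,p)$ according to the three cases, which is exactly \eqref{eq:prp:Lambda:1}. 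Part (4) is the mirror computation with $\xi$ and $\varphi^+$ in place of $\eta$ and $\varphi^-$, and the asymmetric shifts $p^{-(m-i)/2}$ in \eqref{eq:tildeL*} flip the roles of $\gamma_+$ and $\gamma_-$.

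For part (2), the identity $:\Lambda_1(z)\Lambda_2(pz)\cdots\Lambda_m(p^{m-1}z):=1$ follows because in each tensor slot the product of all $m$ factors collapses: the unique $\eta$-factor (contributed by $\Lambda_k$ in slot $k$ with argument $p^{-(k-1)/2}\cdot p^{k-1}z = p^{(k-1)/2}z$) is multiplied by $\varphi^-$ at every later slot with arguments that, after a short shift computation, make the accumulated exponent of $a_{-n}$ vanish, and analogously for $a_n$; the $\alpha, \beta$ dressing kills the remaining global contribution. Part (5) is the $*$-analogue. For part (3), I would verify \eqref{eq:prp:Lambda:3} by directly writing both sides as vertex operators and comparing the coefficients of each $a_n$ mode: the left side has an $\xi$ in slot $k$, the right side has $\eta$'s in all slots $\neq k$, and the arguments $p^{k-1}z$ for $i<k$ versus $p^{l-2}z$ for $l>k$ are arranged precisely so that the $\eta$-exponents add up, via the identity $\sum_{j\neq k} (1-t^{\pm n})p^{\pm(\cdot)n/2} = (1-t^{\pm n})p^{-n/2}[\ldots]$, to match the $\xi$-exponent after accounting for the $\alpha^{-1},\beta^{-1}$ dressings. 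The relation $T_1^*(z) = y_1^{-1}\cdots y_m^{-1} T_{m-1}(z)$ in \eqref{eq:prp:Lambda:3:2} then follows by substituting \eqref{eq:prp:Lambda:3} into \eqref{eq:TL} and recognizing the resulting normally ordered product as the defining sum \eqref{eq:TiL} for $T_{m-1}$.

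The main obstacle I anticipate is the bookkeeping: the shifts in \eqref{eq:tildeL}, \eqref{eq:tildeL*} introduce a thicket of fractional powers of $p$, and the geometric-series expansion of $1/(\gamma^n-\gamma^{-n})$ implicit in $\alpha,\beta$ must be carefully matched against the finite partial sums arising from the $m$ tensor slots. Once one proves by explicit summation that $\rho_y^{(m)}(\alpha(z)\beta(w))^{-1}\cdot(\text{all the }\varphi^-\text{-contractions between slots}) = f_{1,m}(w/z)$, every other OPE in the proposition is a clean specialization of the same identity, and parts (2), (3), (5) become elementary normal-ordering verifications.
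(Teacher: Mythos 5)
Your plan is essentially the paper's proof: both proceed by pushing $\alpha,\beta$ through $\rho_y^{(m)}$ to get per-slot dressing operators (the paper's $\alpha^m_{(i)},\beta^m_{(i)}$), then computing the normal-ordering scalar in each tensor component separately and multiplying the slot contributions to recover $f_{1,m}(w/z)^{-1}$ times $1$, $\gamma_+$ or $\gamma_-$; parts (2), (3), (5) are likewise handled by matching the exponent of each $a_{\pm n}$ mode slot by slot, exactly as you propose. The one place you diverge is part (4): you propose redoing the full contraction computation with $\xi$ and $\varphi^+$ in place of $\eta$ and $\varphi^-$, whereas the paper avoids building a second contraction table altogether by substituting the already-proved identity \eqref{eq:prp:Lambda:3}, which expresses $\Lambda_i^*$ as a normally ordered product of $\Lambda_k$'s, into the OPE \eqref{eq:prp:Lambda:1}; the resulting products of $\gamma_\pm$ at $p$-shifted arguments collapse via $\gamma_-(z,w;q,p)/\gamma_+(z,pw;q,p)=1$ (in the paper's abbreviated notation, $\gamma_-(u)/\gamma_+(pu)=1$). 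Your mirror computation will also work and makes (4) independent of (3), but the paper's reduction is shorter and automatically explains why $\gamma_+$ and $\gamma_-$ swap roles. Everything else in your outline, including the claim that the slot-$k$ exponents in part (2) cancel against the $\alpha,\beta$ dressing and that (3) is a mode-by-mode identity, matches the published argument.
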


\begin{proof}
See \S \ref{subsec:prf:Lambda}.
\end{proof}

\begin{prp}\label{prp:TT}
We have
\begin{align}
\label{eq:TT:1i}
&f_{1,m}(\tfrac{w}{z})T_1(z)T_i(w)-f_{1,m}(p^{1-i}\tfrac{z}{w})T_i(w)T_1(z)
=\dfrac{(1-q)(1-t^{-1})}{1-p}
 [\delta(p^{-1}\tfrac{w}{z}) T_{i+1}(z)-\delta(p^i\tfrac{w}{z}) T_{i+1}(w)],
\\
\label{eq:TT:mm}
\begin{split}
&f_{1,m}(\tfrac{w}{z})T_{m-1}(z)T_{m-1}(w)
-f_{1,m}(\tfrac{z}{w})T_{m-1}(w)T_{m-1}(z)
\\
&\hskip10em
 =\dfrac{(1-q^{-1})(1-t)}{1-p^{-1}}
 [\delta(p \tfrac{w}{z})T_2^*(z)-\delta(p^{-1}\tfrac{w}{z})T_2^*(w)].
\end{split}
\end{align}
\end{prp}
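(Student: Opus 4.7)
My plan is to derive both identities by direct OPE computation using the normally ordered expansions in Proposition~\ref{prp:Lambda}, following the standard template for quadratic relations among $\calW_{q,p}(\Liesl_m)$ currents in \cite{AKOS:1996,FF:1996}.

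For \eqref{eq:TT:1i}, I would first expand $T_1(z)T_i(w)$ via \eqref{eq:TL}--\eqref{eq:TiL} as a double sum over $j$ and strictly increasing $j_1<\cdots<j_i$. By iteratively applying Proposition~\ref{prp:Lambda}(1) to move $\Lambda_j(z)$ past each $\Lambda_{j_l}(wp^{l-1})$, I rewrite every summand as a single fully normally ordered product $:\!\Lambda_j(z)\prod_l\Lambda_{j_l}(wp^{l-1})\!:$ times an explicit rational function in $z/w$ built out of $\gamma_\pm$ factors and $f_{1,m}$-ratios at shifted arguments. Carrying out the parallel expansion of $f_{1,m}(p^{1-i}z/w)T_i(w)T_1(z)$ starting from the opposite end, the shift $p^{1-i}$ in the regularizer is tuned so that for every $j\notin\{j_1,\dots,j_i\}$ the two rational prefactors agree, and hence the corresponding off-diagonal summands cancel in the difference. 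The remaining diagonal contributions $j=j_l$ are supported at the poles $z=p^{-1}w$ and $z=p^i w$, and the standard formal-delta identity of the form $h(z,w)-h(w,z)=\sum_{\text{poles}}\mathrm{Res}\cdot\delta$ turns the meromorphic difference into the $\delta(p^{-1}w/z)$- and $\delta(p^i w/z)$-supported terms. Using Proposition~\ref{prp:Lambda}(2), the $(i+1)$-fold normally ordered product obtained by inserting $\Lambda_j$ into the chain at the appropriate spot assembles precisely into a summand of $T_{i+1}$, and the overall constant $(1-q)(1-t^{-1})/(1-p)$ emerges from evaluating the residues of the rational prefactors at the two pole locations.

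For \eqref{eq:TT:mm}, I would apply \eqref{eq:prp:Lambda:3:2} to rewrite $T_{m-1}(z)=y_1\cdots y_m\cdot T^*_1(z)$, so that after dividing by $(y_1\cdots y_m)^2$ the claim reduces to the dual quadratic relation
\begin{align*}
f_{1,m}(\tfrac{w}{z})T^*_1(z)T^*_1(w)-f_{1,m}(\tfrac{z}{w})T^*_1(w)T^*_1(z)
=\tfrac{(1-q^{-1})(1-t)}{1-p^{-1}}[\delta(p\tfrac{w}{z})T^*_2(z)-\delta(p^{-1}\tfrac{w}{z})T^*_2(w)].
\end{align*}
This is then obtained by repeating the previous argument with Proposition~\ref{prp:Lambda}(4) and (5) in place of (1) and (2); the swap between $\gamma_+$ and $\gamma_-$ at the $i<j$ and $i>j$ positions produces the $p\leftrightarrow p^{-1}$ flip in the delta arguments and the $(1-q^{-1})(1-t)/(1-p^{-1})$ overall constant.

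The main obstacle will be the off-diagonal cancellation: one must verify that for each $j\notin\{j_1,\dots,j_i\}$ the rational prefactor produced by commuting $\Lambda_j(z)$ through the chain and multiplying by $f_{1,m}(w/z)$ coincides with the prefactor produced in the opposite ordering multiplied by $f_{1,m}(p^{1-i}z/w)$. This reduces to a telescoping identity among the $\gamma_\pm(z,wp^{l-1};q,p)$ at shifted arguments together with ratios of $f_{1,m}$ along the $p$-chain, and is precisely the source of the specific prefactor $f_{1,m}(p^{1-i}z/w)$ appearing on the second term. Once this identity is in place the extraction of delta-function residues and their identification with $T_{i+1}$ (respectively $T^*_2$) via Proposition~\ref{prp:Lambda}(2), (5) is mechanical.
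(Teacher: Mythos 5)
Your proposal is correct and follows essentially the same route as the paper, which proves \eqref{eq:TT:1i} from \eqref{eq:TL}, \eqref{eq:TiL} and the operator product \eqref{eq:prp:Lambda:1} (deferring the mechanical normal-ordering, cancellation and residue extraction to \cite[Theorem 2]{FF:1996}), and obtains \eqref{eq:TT:mm} by exactly your reduction via \eqref{eq:prp:Lambda:3:2} to the starred currents together with \eqref{eq:TiL*} and \eqref{eq:prp:Lambda:4}. No gap to report.
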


\begin{proof}
\eqref{eq:TT:1i} follows from \eqref{eq:TL}, \eqref{eq:TiL} 
and \eqref{eq:prp:Lambda:1}.
See \cite[Theorem 2]{FF:1996}  for detail\footnote{
It seems that \cite{FF:1996} contains some typo.
In (6.2) of that paper, the term $f_{m,N}(\frac{z}{w})$ should be 
$f_{m,N}(p^{1-m}\frac{z}{w})$.}.

\eqref{eq:TT:mm} is also shown by the same method 
using \eqref{eq:prp:Lambda:3:2}, \eqref{eq:TiL*} and 
and \eqref{eq:prp:Lambda:4}.
\end{proof}

\subsection{Deformed $\calW$ algebra and kernel function}
Our final consequence of this paper relates the vacuum expectation values 
of the deformed algebra $\calW_{q,p}$  with the finite kernel function.
\begin{thm}
Let $\left|0\right>$ be the vacuum of $\calF$, that is, 
$a_0\left|0\right>=\left|0\right>$ and $a_n\left|0\right>=0$ for $n>0$.
Let $\left<0\right|$ to be the dual vacuum.
We denote the tensor $\left|0\right>^{\otimes m}\in\calF^{\otimes m}$ 
by the same symbol $\left|0\right>$.
We use the similar abbreviation for the tensored dual vacuum.
Then, denoting $y=(y_1,\ldots,y_m)$, we have
\begin{align*}
\dfrac{(-1)^n}{(1-q)^n n!}
\prod_{i<j}f_{1,m}(z_i/z_j)
\left<0\right| T_1(z_1)T_1(z_2)\cdots T_1(z_n)\left|0\right>
=K_n(y,z;q,p).
\end{align*}
\end{thm}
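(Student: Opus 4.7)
The plan is to evaluate the correlator via Wick's theorem for vertex operators and then match the result with the explicit formula \eqref{eq:K:dfn} for $K_n$ from Proposition~\ref{prp:K}.

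First, I would expand $T_1(z_k)=\sum_{i_k=1}^m y_{i_k}\Lambda_{i_k}(z_k)$ using \eqref{eq:TL}, reducing the computation to a sum over tuples $(i_1,\ldots,i_n)\in\{1,\ldots,m\}^n$ of the expectation values $\langle 0|\Lambda_{i_1}(z_1)\cdots\Lambda_{i_n}(z_n)|0\rangle$. Each $\Lambda_i(z)$ is an exponential in the bosonic modes $a_n$ with $n\neq 0$ and contains no zero mode $a_0$ (the images of $\eta,\xi,\varphi^\pm,\alpha,\beta$ under $\rho_c$ never involve $a_0$, and the iterated coproduct preserves this). Consequently the standard Wick theorem for Heisenberg vertex operators applies, giving $\langle 0|\prod_\alpha\Lambda_{i_\alpha}(z_\alpha)|0\rangle=\prod_{\alpha<\beta}\langle 0|\Lambda_{i_\alpha}(z_\alpha)\Lambda_{i_\beta}(z_\beta)|0\rangle$, with the normal-ordered products that arise in the OPE factorization having unit vacuum expectation value.

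Second, rearranging \eqref{eq:prp:Lambda:1} produces $\langle 0|\Lambda_i(z)\Lambda_j(w)|0\rangle=f_{1,m}(w/z)^{-1}h_{ij}(z,w)$, where $h_{ij}(z,w)$ denotes the case-dependent factor equal to $1$, $\gamma_+(z,w;q,p)$, or $\gamma_-(z,w;q,p)$ according to whether $i=j$, $i<j$, or $i>j$. Substituting and multiplying through by $\prod_{i<j}f_{1,m}(z_i/z_j)$, the left-hand side of the theorem equals $\frac{(-1)^n}{(1-q)^n n!}\sum_{(i_1,\ldots,i_n)}y_{i_1}\cdots y_{i_n}\prod_{\alpha<\beta}\frac{f_{1,m}(z_\alpha/z_\beta)}{f_{1,m}(z_\beta/z_\alpha)}h_{i_\alpha i_\beta}(z_\alpha,z_\beta)$.

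Third, comparing with \eqref{eq:K:dfn} for $K_n(y,z;q,p)$ reduces, per pair $(\alpha,\beta)$, to the equality $\frac{f_{1,m}(z/w)}{f_{1,m}(w/z)}h_{ij}(z,w)=\gamma_{ij}(z,w;q,p)$. A direct algebraic check using \eqref{eq:gamma} and \eqref{eq:gamma+-} yields the factorization $\gamma_{ij}(z,w;q,p)=\gamma_{ii}(z,w;q,p)\cdot h_{ij}(z,w)$ with $\gamma_{ii}(z,w;q,p)=(z-pw)(z-p^{-1}w)/(z-w)^2$, so the whole matching collapses to the single key identity $f_{1,m}(z/w)/f_{1,m}(w/z)=\gamma_{ii}(z,w;q,p)$. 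This can be verified by converting the defining series for $f_{1,m}$ into an infinite Pochhammer product (using $t=q/p$) and checking that the alternating infinite factors in the ratio telescope to the stated rational function.

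The main obstacle is exactly this ratio identity for $f_{1,m}$. The exponential series defining $f_{1,m}$ converges only for small argument, so the verification requires either working with the meromorphic continuation via the infinite Pochhammer product form or interpreting the ratio as a combined formal Laurent series in the appropriate expansion regime. This identity encodes the precise correspondence between the structure functions of the deformed $\calW_{q,p}(\Liesl_m)$-algebra and those of the Macdonald kernel; once it is in hand, the theorem follows immediately from the bookkeeping above.
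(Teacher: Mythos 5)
Your overall route --- expanding $T_1$ by \eqref{eq:TL}, Wick-factorizing the vacuum expectation value into pair contractions read off from \eqref{eq:prp:Lambda:1} (each $\Lambda_i(z)$ is indeed normal-ordered with unit vacuum expectation), and matching pair by pair against \eqref{eq:K:dfn} --- is exactly the one-line argument the paper has in mind, and your factorization $\gamma_{i,j}(z,w;q,p)=\ep_2(z,w;p)\,h_{ij}(z,w)$ with $h_{ij}\in\{1,\gamma_+,\gamma_-\}$ is correct. The gap is the single identity you reduce everything to, namely $f_{1,m}(z/w)/f_{1,m}(w/z)=(z-pw)(z-p^{-1}w)/(z-w)^2$, together with your claim that it ``can be verified'' by a telescoping Pochhammer computation. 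It cannot: the identity is false. For $m=1$ one has $f_{1,1}\equiv 1$ while the right-hand side is not $1$. For $m\ge 2$ (taking $f_{1,m}$ with the factor $1/n$ in the exponent, as forced by Remark \ref{rmk:ABf} and the proof of Proposition \ref{prp:Lambda}), resumming the exponent gives an infinite-product continuation such as $f_{1,2}(x)=\dfrac{1}{1-x}\dfrac{(qx;p^2)_\infty(t^{-1}x;p^2)_\infty}{(qpx;p^2)_\infty(t^{-1}px;p^2)_\infty}$: the continuation has only a \emph{simple} pole at $x=1$, so the ratio $f_{1,m}(1/x)/f_{1,m}(x)$ is regular at $x=1$, whereas your target has a double pole there; moreover the ratio keeps infinitely many $q$- and $t$-dependent zeros and poles (e.g.\ a zero at $x=q$) and is not a rational function at all, so no telescoping to $\ep_2$ occurs.

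Concretely, the Wick computation yields
$\langle 0|T_1(z_1)\cdots T_1(z_n)|0\rangle
=\sum_{i_1,\ldots,i_n}y_{i_1}\cdots y_{i_n}
\prod_{\alpha<\beta}f_{1,m}(z_\beta/z_\alpha)^{-1}h_{i_\alpha i_\beta}(z_\alpha,z_\beta)$,
so recovering $K_n(y,z;q,p)$ requires the per-pair factor $\ep_2(z_\alpha,z_\beta;p)\,f_{1,m}(z_\beta/z_\alpha)$; your key identity is precisely the assertion that this equals $f_{1,m}(z_\alpha/z_\beta)$, and that is what fails. So your proof breaks at its final step: you either need a genuine additional input beyond \eqref{eq:prp:Lambda:1} that produces the missing factor $\prod_{\alpha<\beta}\ep_2(z_\alpha,z_\beta;p)$, or you must reinterpret/correct the prefactor as $\prod_{i<j}\ep_2(z_i,z_j;p)\,f_{1,m}(z_j/z_i)$ before comparing with \eqref{eq:K:dfn}. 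Deferring this point to an unverified ``telescoping'' is not a technicality --- it is the entire mathematical content of the matching between the $\calW_{q,p}$ structure function and the kernel, and as stated the step does not go through.
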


\begin{proof}
This follows from \eqref{eq:TL}, 
the operator product \eqref{eq:prp:Lambda:1} 
and the definition \eqref{eq:K:dfn}.
\end{proof}

%%%%%%%%%%%%%%%%%%%%%%%%%%%%%%%%%%%%%%%%%%%%%%%%%%
%%%%%%%%%%%%%%%%%%%%%%%%%%%%%%%%%%%%%%%%%%%%%%%%%%
\section{Proofs of the propositions}

\subsection{Proof of Proposition \ref{prp:b-psi}}\label{subsec:prf:b-psi}

Using the $\gamma_{\pm}$ defined in \eqref{eq:gamma+-}, we have 
%\begin{align*}
%\dfrac{\gamma_{i,j}(z,w;q,t)}{\ep_2^{(t)}(z,w)}=
%\begin{cases}
%1 & i=j,\\
%\gamma_+(z,w) & i<j,\\
%\gamma_-(z,w)& i>j. 
%\end{cases}
%\end{align*}
%Also note that
\begin{align}\label{eq:oeg}
&\dfrac{\omega(z,w)}{\ep_2^{(t)}(z,w)}=\gamma_+(z,w;q,t),\quad
 \dfrac{\omega(w,z)}{\ep_2^{(t)}(w,z)}=\gamma_-(z,w;q,t),
\quad
 \dfrac{\omega(w,z)}{\omega(z,w)}
%=\dfrac{\omega(w,z)}{\ep_2^{(t)}(z,w)}\dfrac{\ep_2^{(t)}(z,w)}{\omega(z,w)}
=\dfrac{\gamma_-(z,w;q,t)}{\gamma_+(z,w;q,t)}.
\end{align}

For later purpose, we prepare the following formulae.
Let $\theta$ and $\rho$ be natural numbers. 
Then
\begin{align}
\prod_{1\le i<j\le \theta}
\gamma_+(q^{-i}z,q^{-j}w;q,t)
&=\left(\dfrac{1-tz/w}{1-q z/w}\right)^{\theta}
  \dfrac{(q z/w)_\theta}{(t z/w)_\theta}
\label{eq:+:tri}
\\
\prod_{1\le i<j\le \theta}
\gamma_-(q^{-i}z,q^{-j}w;q,t)
&=\left(\dfrac{1-z/w}{1-q t^{-1}z/w}\right)^{\theta}
  \dfrac{(q t^{-1}z/w)_\theta}{(z/w)_\theta}
\label{eq:-:tri}
\\
\prod_{l=1}^\theta \prod_{k=1}^\rho
\gamma_+(q^{-l}z,q^{-k}w;q,t)
&=\dfrac{(q^{-\rho}w/z)_\theta}{(w/z)_\theta}
 \dfrac{(q t^{-1}w/z)_\theta}{(q^{-\rho+1}t^{-1}w/z)_\theta}
\label{eq:+:1}
\\
&=\dfrac{(q^{\rho-\theta+1} z/w)_\theta}{(q^{-\theta+1} z/w)_\theta}
 \dfrac{(q^{-\theta} t z/w)_\theta}{(q^{\rho-\theta}t z/w)_\theta},
\label{eq:+:2}
\\
\prod_{l=1}^\theta \prod_{k=1}^\rho
\gamma_-(q^{-l}z,q^{-k}w;q,t)
&=\dfrac{(q w/z)_\theta}{(q^{-\rho+1}w/z)_\theta}
 \dfrac{(q^{-\rho} t w/z)_\theta}{(t w/z)_\theta}
\label{eq:-:1}
\\
&=\dfrac{(q^{-\theta} z/w)_\theta}{(q^{\rho-\theta}z/w)_\theta}
  \dfrac{(q^{\rho-\theta+1} t^{-1} z/w)_\theta}
        {(q^{-\theta+1}t^{-1} z/w)_\theta}.
\label{eq:-:2}
\end{align}
Here we used $(u)_n\seteq(u;q)_n=\prod_{i=1}^n(1-u q^{i-1})$.
These equations are checked by simple calculations.

%Now we start the proof.

\subsubsection{Proof of \eqref{eq:b}} 

By \eqref{eq:fq} we have
\begin{align*}
\dfrac{(1-q)^n n!}{(-1)^n}F_\lambda(z;q,t)=
\left(\dfrac{1-q}{1-t}\right)^{|\lambda|}
\sum_{\mu\ge\lambda'}c_{\lambda\mu}^{e\to P}(q,t)\ep_{\mu}(z;q)
\dfrac{|\mu|!}{\prod_{i=1}^{\ell(\mu)}\mu_i!}.
\end{align*}
Recalling the argument of \cite[Proposition 2.19]{FHHSY:2009},
we find that under the specialization $\varphi^{(q^{-1})}_\lambda$
only the term $\ep_{\lambda'}$ in $F_\lambda(z;q,t)$  survives 
and the other terms $\ep_{\mu}$ vanish.
The specialization result is
\footnote{This expression is given at the last equation in the proof of \cite[Proposition 2.19]{FHHSY:2009}, although it contains a typo. The range  ``$1\le j< k\le l$" of the third product should be ``$1\le j<k\le \ell(\lambda')$"}
\begin{align*}
\varphi^{(q^{-1})}_\lambda\ep_{\lambda'}(y)
&=
\dfrac{\prod_{h=1}^{\ell(\lambda)}\lambda'_h!}{n!}
 \prod_{i=1}^{\ell(\lambda)} \ep_{\lambda'_i}(y_1,\ldots,y_{\lambda'_i};q)
 \prod_{1\le j<k\le \ell(\lambda')}
 \prod_{\alpha=1}^{\lambda_j'}
 \prod_{\beta=1}^{\lambda_k'}\omega(q^{-j+1} y_\alpha,q^{-k+1} y_\beta)\\
&=
\dfrac{\prod_{h=1}^{\ell(\lambda)}\lambda'_h!}{n!}
 \prod_{i=1}^{\ell(\lambda)} \ep_{\lambda'_i}(y_1,\ldots,y_{\lambda'_i};q)
%\\
%&\phantom{=}
\times
 \prod_{\alpha=1}^{\ell(\lambda)}
 \prod_{1\le i<j\le \lambda_\alpha}
 \omega(q^{-i+1} y_\alpha,q^{-j+1} y_\alpha)
\\
&\phantom{=}\times
 \prod_{1\le \alpha<\beta\le \ell(\lambda)}
 \Big[\prod_{1\le i<j\le \lambda_\beta}
 \omega(q^{-i+1} y_\alpha,q^{-j+1} y_\beta)
 \omega(q^{-i+1} y_\beta,q^{-j+1} y_\alpha)
\\
&\phantom{=\times \prod_{1\le \alpha<\beta\le \ell(\lambda)}\Big[}
 \times
 \prod_{i=1}^{\lambda_\beta}\prod_{j=\lambda_\beta+1}^{\lambda_\alpha}
 \omega(q^{-i+1} y_\beta,q^{-j+1} y_\alpha)
 \Big].
\end{align*}
We also note that $c_{\lambda'\lambda}^{e\to P}(q,t)=1$.

Recalling \eqref{eq:gamma-omega}, 
we can also calculate the first specialization $\varphi^{(q^{-1})}_\lambda$
of the numerator in \eqref{eq:b} as 
\begin{align*}
&\varphi^{(q^{-1})}_\lambda \gamma_{D}(z)
=\Big[\prod_{k=1}^{\ell(\lambda)}
  \prod_{1\le i<j\le \lambda_k}\ep_2(q^{-i},q^{-j};t)\Big]
  \Big[\prod_{\alpha=1}^{\ell(\lambda)}\prod_{\beta=\alpha}^{\ell(\lambda)}
  \prod_{i=1}^{\lambda_\alpha}\prod_{j=1}^{\lambda_\beta}
  \omega(q^{-i}y_\alpha,q^{-j}y_\beta)\Big]
\\
&=\Big[\prod_{k=1}^{\ell(\lambda)}
  \prod_{1\le i<j\le \lambda_k}\ep_2(q^{-i},q^{-j};t)\Big]
%\\
%&\phantom{=}
 \prod_{1\le \alpha<\beta\le \ell(\lambda)}\Big[
 \big(
  \prod_{1\le i<j\le \lambda_\beta} \omega(q^{-i}y_\alpha,q^{-j}y_\beta)
 \big)
 \big(
  \prod_{1\le j<i\le \lambda_\beta} \omega(q^{-i}y_\alpha,q^{-j}y_\beta)
 \big)
\\
&\phantom{=\times\prod_{1\le \alpha<\beta\le \ell(\lambda)}\Big[}
 \big(
  \prod_{j=1}^{\lambda_\beta}\prod_{i=\lambda_\beta+1}^{\lambda_\alpha} 
  \omega(q^{-i}y_\alpha,q^{-j}y_\beta)
 \big)
 \big(
  \prod_{i=1}^{\lambda_\alpha} \omega(q^{-i}y_\alpha,q^{-i}y_\beta)
 \big)
 \Big].
\end{align*}

Thus we have
\begin{align*}
&
\dfrac{(-1)^n}{(1-q)^n n!}
\widetilde{\varphi}^{(\zeta)}_\lambda 
\dfrac{\gamma_{D}(z)}{F_\lambda(z;q,t)}
=
\left(\dfrac{1-t}{1-q}\right)^{|\lambda|}
\prod_{\alpha=1}^{\ell(\lambda)} 
\prod_{1\le i<j\le\lambda_\alpha}
\dfrac{\ep_2^{(t)}(q^{-i+1}y_\alpha,q^{-j+1}y_\beta)}
      {\omega(q^{-i+1}y_\alpha,q^{-j+1}y_\beta)}\times
\\
&%\times
\prod_{1\le\alpha<\beta\le\ell(\lambda)}\Big[
\big(\prod_{1\le i<j\le\lambda_\beta}
\dfrac{\omega(q^{-i+1}y_\alpha,q^{-j+1}y_\beta)}
      {\omega(q^{-i+1}y_\beta,q^{-j+1}y_\alpha)}\big)
\big(\prod_{i=1}^{\lambda_\beta}\prod_{j=\lambda_\beta+1}^{\lambda_\alpha}
\dfrac{\omega(q^{-j+1}y_\alpha,q^{-i+1}y_\beta)}
      {\omega(q^{-i+1}y_\beta,q^{-j+1}y_\alpha)}\big)
%\\
%&\phantom{\times\prod_{1\le\alpha<\beta\le\ell(\lambda)}\Big[}
\big(
\dfrac{\omega(y_\alpha,y_\beta)}
      {\ep_2^{(q)}(y_\beta, y_\alpha)}\big)^{\lambda_\beta}
\Big].
\end{align*}

Then recalling \eqref{eq:oeg} and using \eqref{eq:+:tri} and \eqref{eq:-:tri},
one has
\begin{align*}
&
\prod_{1\le i<j\le\lambda_\alpha}
\dfrac{\ep_2^{(t)}(q^{-i+1}y_\alpha,q^{-j+1}y_\beta)}
      {\omega(q^{-i+1}y_\alpha,q^{-j+1}y_\beta)}
=
\left(\dfrac{1-q}{1-t}\right)^{\lambda_\alpha}
\dfrac{(t)_{\lambda_\alpha}}{(q)_{\lambda_\alpha}},
\\
&
\prod_{1\le i<j\le\lambda_\beta}
\dfrac{\omega(q^{-i+1}y_\alpha,q^{-j+1}y_\beta)}
      {\omega(q^{-i+1}y_\beta,q^{-j+1}y_\alpha)}
\Big[\dfrac{\omega(y_\alpha,y_\beta)}
      {\ep_2^{(q)}(y_\beta, y_\alpha)}\Big]^{\lambda_\beta}
=
\dfrac{(t y_\beta/y_\alpha)_{\lambda_\alpha}}
      {(q y_\beta/y_\alpha)_{\lambda_\alpha}}
\dfrac{(q t^{-1} y_\beta/y_\alpha)_{\lambda_\alpha}}
      {(y_\beta/y_\alpha)_{\lambda_\alpha}},
\\
&
\prod_{i=1}^{\lambda_\beta}\prod_{j=\lambda_\beta+1}^{\lambda_\alpha}
\dfrac{\omega(q^{-j+1}y_\alpha,q^{-i+1}y_\beta)}
      {\omega(q^{-i+1}y_\beta,q^{-j+1}y_\alpha)}
\\
&=
\dfrac{(q y_\beta/y_\alpha)_{\lambda_\beta}}
      {(t y_\beta/y_\alpha)_{\lambda_\beta}}
\dfrac{(  y_\beta/y_\alpha)_{\lambda_\beta}}
      {(q t^{-1} y_\beta/y_\alpha)_{\lambda_\beta}}
\dfrac{(q^{\lambda_\alpha-\lambda_\beta}t y_\beta/y_\alpha)_{\lambda_\beta}}
      {(q^{\lambda_\alpha-\lambda_\beta}  y_\beta/y_\alpha)_{\lambda_\beta}}
\dfrac{(q^{\lambda_\alpha-\lambda_\beta+1}t^{-1} 
        y_\beta/y_\alpha)_{\lambda_\beta}}
      {(q^{\lambda_\alpha-\lambda_\beta+1}y_\beta/y_\alpha)_{\lambda_\beta}}.
\end{align*}

Combining these factors, we obtain
\begin{align*}
\dfrac{(-1)^n}{(1-q)^n n!}
\lim_{\zeta\to t}\widetilde{\varphi}^{(\zeta)}_\lambda
\dfrac{\gamma_{D}(z;t)}{F_\lambda(z;q,t)}
&=
\prod_{\alpha=1}^{\ell(\lambda)}
\dfrac{(t)_{\lambda_\alpha}}{(q)_{\lambda_\alpha}}
\prod_{1\le\alpha<\beta\le\ell(\lambda)}
\dfrac{(q^{\lambda_\alpha-\lambda_\beta}t y_\beta/y_\alpha)_{\lambda_\beta}}
      {(q^{\lambda_\alpha-\lambda_\beta}  y_\beta/y_\alpha)_{\lambda_\beta}}
\dfrac{(q^{\lambda_\alpha-\lambda_\beta+1}t^{-1} 
        y_\beta/y_\alpha)_{\lambda_\beta}}
      {(q^{\lambda_\alpha-\lambda_\beta+1}y_\beta/y_\alpha)_{\lambda_\beta}}
\\
&=
\prod_{1\le\alpha<\beta\le\ell(\lambda)}
\dfrac{(q^{\lambda_\alpha-\lambda_\beta}t^{\beta-\alpha+1})_{\lambda_\beta-\lambda_{\beta+1}}}
      {(q^{\lambda_\alpha-\lambda_\beta+1}t^{\beta-\alpha})_{\lambda_\beta-\lambda_{\beta+1}}}.
\end{align*}
But one can easily find that the last expression equals to $b_\lambda(q,t)$ 
using the form \eqref{eq:b_fact}.

%%%%%%%%%%%%%%%%%%%%%%%%%%%%%%%%%%%%%%%%%%%%%%%%%%
\subsubsection{Proof of \eqref{eq:psi}} 

For a tableau $T\in\Tbr{\lambda;m}$,
define $\theta_{\alpha,k}$ and $\lambda_\alpha^{(k)}$ 
as explained in \S \ref{subsec:tableau}.
Then by the direct calculation we have
\begin{align}
\widetilde{\varphi}^{(\zeta)}_\lambda \dfrac{\gamma_{T}(z)}{\gamma_{D}(z)}
=
&
\prod_{1\le\alpha<\beta\le\ell(\lambda)}
\prod_{i=1}^{\lambda_\alpha}
\prod_{j=1}^{\lambda_\beta}
\gamma_+(q^{-i}\zeta^\alpha,q^{-j}\zeta^\beta)^{-1}
\label{eq:psi:1}
\\
&\times
\prod_{k=1}^m
\prod_{\alpha=1}^{\ell(\lambda)}
\prod_{\beta=\alpha+1}^{\ell(\lambda)}
\prod_{i=1}^{\theta_{\alpha,k}}
\prod_{j=1}^{\lambda_\beta^{(k-1)}}
\gamma_-(q^{-i-\lambda_\alpha^{(k-1)}}\zeta^\alpha,q^{-j}\zeta^\beta)
\label{eq:psi:2}
\\
&\times
\prod_{k=1}^m
\prod_{\alpha=1}^{\ell(\lambda)}
\prod_{\beta=\alpha}^{\ell(\lambda)}
\prod_{i=1}^{\theta_{\alpha,k}}
\prod_{j=1+\lambda_\beta^{(k)}}^{\lambda_\beta}
\gamma_+(q^{-i-\lambda_\alpha^{(k-1)}}\zeta^\alpha,q^{-j}\zeta^\beta)
\label{eq:psi:3}
\end{align}

By the formula \eqref{eq:+:1} we find that
\begin{align}\label{eq:psi:1:2}
\lim_{\zeta\to t}\eqref{eq:psi:1}=
\prod_{1\le\alpha<\beta\le\ell(\lambda)}
\dfrac{(t^{\beta-\alpha})_{\lambda_\alpha}}
      {(q^{-\lambda_\beta}t^{\beta-\alpha})_{\lambda_\alpha}}
\dfrac{(q^{-\lambda_\beta+1}t^{\beta-\alpha-1})_{\lambda_\alpha}}
      {(q t^{\beta-\alpha-1})_{\lambda_\alpha}}.
\end{align}
Note that the regularity of \eqref{eq:psi:1} at $\zeta=t$ 
is included in this equation.
Similarly by the formula \eqref{eq:-:1}, 
\eqref{eq:psi:2} is regular at $\zeta=t$ and its value is
\begin{align}\label{eq:psi:2:2}
\lim_{\zeta\to t}\eqref{eq:psi:2}=
\prod_{k=1}^m
\prod_{\alpha=1}^{\ell(\lambda)}
\prod_{\beta=\alpha+1}^{\ell(\lambda)}
\dfrac{(q^{\lambda_\alpha^{(k-1)}+1}t^{\beta-\alpha})_{\theta_{\alpha,k}}}
      {(q^{\lambda_\alpha^{(k-1)}-\lambda_\beta^{(k-1)}}
       t^{\beta-\alpha+1})_{\theta_{\alpha,k}}}
\dfrac{(q^{\lambda_\alpha^{(k-1)}-\lambda_\beta^{(k-1)}}
       t^{\beta-\alpha+1})_{\theta_{\alpha,k}}}
      {(q^{\lambda_\alpha^{(k-1)}-\lambda_\beta^{(k-1)}+1}
       t^{\beta-\alpha})_{\theta_{\alpha,k}}}.
\end{align}
The rest term \eqref{eq:psi:3} is calculated by the formula \eqref{eq:+:1} and \eqref{eq:+:2}:
\begin{align}\label{eq:psi:3:2}
\begin{split}
\lim_{\zeta\to t}\eqref{eq:psi:3}=
\prod_{k=1}^m
\prod_{\alpha=1}^{\ell(\lambda)}\Big[
&\dfrac{(t)_{\theta_{\alpha,k}}}{(q)_{\theta_{\alpha,k}}}
\dfrac{(q^{\lambda_\alpha-\lambda_\alpha^{(k)}+1})_{\theta_{\alpha,k}}}
      {(q^{\lambda_\alpha-\lambda_\alpha^{(k)}})_{\theta_{\alpha,k}}}
\\
&\prod_{\beta=\alpha+1}^{\ell(\lambda)}
\dfrac{(q^{\lambda_\alpha^{(k-1)}-\lambda_\beta^{(k-1)}+1}
        t^{\beta-\alpha-1})_{\theta_{\alpha,k}}}
      {(q^{\lambda_\alpha^{(k-1)}-\lambda_\beta^{(k)}}
       t^{\beta-\alpha})_{\theta_{\alpha,k}}}
\dfrac{(q^{\lambda_\alpha^{(k-1)}-\lambda_\beta}
       t^{\beta-\alpha})_{\theta_{\alpha,k}}}
      {(q^{\lambda_\alpha^{(k-1)}-\lambda_\beta+1}
       t^{\beta-\alpha-1})_{\theta_{\alpha,k}}}
\Big].
\end{split}
\end{align}
Note that some parts of \eqref{eq:psi:2:2} and \eqref{eq:psi:3:2} are 
combined into the next form.
\begin{align*}
&
\Big[\prod_{k=1}^m
\prod_{\alpha=1}^{\ell(\lambda)}
\prod_{\beta=\alpha+1}^{\ell(\lambda)}
\dfrac{(q^{\lambda_\alpha^{(k-1)}+1}t^{\beta-\alpha})_{\theta_{\alpha,k}}}
      {(q^{\lambda_\alpha^{(k-1)}-\lambda_\beta^{(k-1)}}
       t^{\beta-\alpha+1})_{\theta_{\alpha,k}}}\Big]
\\
&
\times
\Big[
\prod_{k=1}^m
\prod_{\alpha=1}^{\ell(\lambda)}
\dfrac{(q^{\lambda_\alpha-\lambda_\alpha^{(k)}+1})_{\theta_{\alpha,k}}}
      {(q^{\lambda_\alpha-\lambda_\alpha^{(k)}})_{\theta_{\alpha,k}}}
\prod_{\beta=\alpha+1}^{\ell(\lambda)}
\dfrac{(q^{\lambda_\alpha^{(k-1)}-\lambda_\beta}
       t^{\beta-\alpha})_{\theta_{\alpha,k}}}
      {(q^{\lambda_\alpha^{(k-1)}-\lambda_\beta+1}
       t^{\beta-\alpha-1})_{\theta_{\alpha,k}}}\Big]
\\
&=
\Big[\prod_{\alpha=1}^{\ell(\lambda)}
 \dfrac{(q)_{\lambda_\alpha}}{(t)_{\lambda_\alpha}}\Big]
\Big[\prod_{1\le\alpha<\beta\le\ell(\lambda)}
 \dfrac{(q^{-\lambda_\beta}t^{\beta-\alpha})_{\lambda_\alpha}}
       {(q^{-\lambda_\beta+1}t^{\beta-\alpha-1})_{\lambda_\alpha}}
 \dfrac{(q t^{\beta-\alpha})_{\lambda_\alpha}}
       {(  t^{\beta-\alpha+1})_{\lambda_\alpha}}\Big]
\\
&=
\Big[\prod_{1\le\alpha<\beta\le\ell(\lambda)}
 \dfrac{(q^{-\lambda_\beta}t^{\beta-\alpha})_{\lambda_\alpha}}
       {(q^{-\lambda_\beta+1}t^{\beta-\alpha-1})_{\lambda_\alpha}}\Big]
\Big[\prod_{1\le\alpha<\beta\le\ell(\lambda)+1}
 \dfrac{(q t^{\beta-\alpha+1})_{\lambda_\alpha}}
       {(  t^{\beta-\alpha})_{\lambda_\alpha}}\Big]
%\\
%&
=\eqref{eq:psi:3:2}^{-1}\times 
 \prod_{\alpha=1}^{\ell(\lambda)}
 \dfrac{(q t^{\ell(\lambda)-\alpha})_{\lambda_\alpha}}
       {(t^{\ell(\lambda)-\alpha+1})_{\lambda_\alpha}}.
\end{align*}

Therefore we have
\begin{align}
\lim_{\zeta\to t}
&\widetilde{\varphi}^{(\zeta)}_\lambda \dfrac{\gamma_{T}(z)}{\gamma_D(z)}
=\Big[
 \prod_{\alpha=1}^{\ell(\lambda)}
 \dfrac{(q t^{\ell(\lambda)-\alpha})_{\lambda_\alpha}}
            {(  t^{\ell(\lambda)-\alpha+1})_{\lambda_\alpha}}
      \prod_{k=1}^m\dfrac{(t)_{\theta_{\alpha,k}}}{(q)_{\theta_{\alpha,k}}}
 \Big]\times
\nonumber
\\
&\phantom{==}\prod_{k=1}^m
 \prod_{1\le\alpha<\beta\le\ell(\lambda)}
 \dfrac{(q^{\lambda_\alpha^{(k-1)}-\lambda_\beta^{(k-1)}}
         t^{\beta-\alpha+1})_{\theta_{\alpha,k}}}
       {(q^{\lambda_\alpha^{(k-1)}-\lambda_\beta^{(k-1)}+1}
         t^{\beta-\alpha})_{\theta_{\alpha,k}}}
 \dfrac{(q^{\lambda_\alpha^{(k-1)}-\lambda_\beta^{(k-1)}+1}
         t^{\beta-\alpha+1})_{\theta_{\alpha,k}}}
       {(q^{\lambda_\alpha^{(k-1)}-\lambda_\beta^{(k)}}
         t^{\beta-\alpha})_{\theta_{\alpha,k}}}
\nonumber
\\
&=\prod_{k=1}^m
 \prod_{1\le\alpha\le\beta\le\ell(\lambda)}
 \dfrac{(q^{\lambda_\alpha^{(k-1)}-\lambda_\beta^{(k-1)}}
         t^{\beta-\alpha+1})_{\theta_{\alpha,k}}}
       {(q^{\lambda_\alpha^{(k-1)}-\lambda_\beta^{(k-1)}+1}
         t^{\beta-\alpha})_{\theta_{\alpha,k}}}
%\nonumber
%\\
%&\phantom{=}
 \times
 \prod_{k=1}^m
 \prod_{1\le\alpha\le\beta\le\ell(\lambda)}
 \dfrac{(q^{\lambda_\alpha^{(k-1)}-\lambda_{\beta+1}^{(k)}+1}
         t^{\beta-\alpha})_{\theta_{\alpha,k}}}
       {(q^{\lambda_\alpha^{(k-1)}-\lambda_{\beta+1}^{(k)}+1}
         t^{\beta-\alpha+1})_{\theta_{\alpha,k}}}.
\label{eq:psi:4}
\end{align}
%Now recall the tableau formula \eqref{eq:tableau:formula}.
Note that the function $f(u)\seteq (t u)_\infty/(q u)_\infty$ satisfies
$f(u)/f(q^{-\theta}u)=(q^{-\theta+1} u)_\infty/(q^{-\theta} t u)_\infty$.
Then \eqref{eq:psi:4} can be rewritten into 
\begin{align}\label{eq:psi:5}
\eqref{eq:psi:4}
=\prod_{k=1}^m
 \prod_{1\le\alpha\le\beta\le\ell(\lambda)}
 \dfrac{f(q^{\lambda_\alpha^{(k-1)}-\lambda_{\beta}^{(k-1)}}t^{\beta-\alpha})}
       {f(q^{\lambda_\alpha^{(k)}  -\lambda_{\beta}^{(k-1)}}t^{\beta-\alpha})}
 \dfrac{f(q^{\lambda_\alpha^{(k)}-\lambda_{\beta+1}^{(k)}  }t^{\beta-\alpha})}
       {f(q^{\lambda_\alpha^{(k-1)}-\lambda_{\beta+1}^{(k)}}t^{\beta-\alpha})}.
\end{align}

Finally, if $T\in\SSTb{\lambda;m}$, we have $k\ge\ell(\lambda^{(k)})$,
Therefore if $\beta\ge k$ 
then $\lambda_{\beta+1}^{(k)}=\lambda_{\beta}^{(k-1)}=0$.
Thus one can see that 
\begin{align*}
\eqref{eq:psi:5}
&=\prod_{k=1}^m
 \prod_{1\le\alpha\le\beta\le\ell(\lambda^{(k-1)})}
 \dfrac{f(q^{\lambda_\alpha^{(k-1)}-\lambda_{\beta}^{(k-1)}}t^{\beta-\alpha})}
       {f(q^{\lambda_\alpha^{(k)}  -\lambda_{\beta}^{(k-1)}}t^{\beta-\alpha})}
 \dfrac{f(q^{\lambda_\alpha^{(k)}-\lambda_{\beta+1}^{(k)}  }t^{\beta-\alpha})}
       {f(q^{\lambda_\alpha^{(k-1)}-\lambda_{\beta+1}^{(k)}}t^{\beta-\alpha})}
%\\
%&
=\prod_{k=1}^m \psi_{\lambda^{(k)}/\lambda^{(k-1)}}(q,t),
%=\psi_T(q,t).
\end{align*}
which is $\psi_T(q,t)$.
On the other hand if $T\in\Tbr{\lambda;m}\setminus\SSTb{\lambda;m}$,
one can see that $\eqref{eq:psi:5}=0$.
Using Proposition \ref{prp:tableau}, 
we have the desired equality.

\subsection{Proof of Proposition \ref{prp:tt*}}
\label{subsec:prf:tt*}

First we rewrite the relation of $\psi^{\pm}(z)$ and $x^{\pm}(w)$ 
given in Definition \ref{dfn:uqt} into the next adjoint form.
\begin{align*}
&\exp\Big( \sum_{n>0} \ad_{b_n}\gamma^{n/2}z^{-n} \Big) x^{\pm}(w)
=\exp\Big( \mp\sum_{n>0} 
           \dfrac{1}{n}(1-q^n)(1-t^{-n})(1-p^{-n})\gamma^{\mp n/2}
           \big(\dfrac{w}{z}\big)^n
     \Big) x^{\pm}(w),
\\
&\exp\Big(  -\sum_{n>0} \ad_{b_{-n}}\gamma^{n/2}z^{n} \Big) x^{\pm}(w)
=\exp\Big(\pm\sum_{n>0} 
          \dfrac{1}{n}(1-q^n)(1-t^{-n})(1-p^{-n})\gamma^{\mp n/2}
          \big(\dfrac{w}{z}\big)^n
      \Big) x^{\pm}(w).
\end{align*}
Here we used the exponential form \eqref{eq:psi_boson} of $\psi^{\pm}$.
Then we see that
\begin{align}
\nonumber
 \alpha(z) x^{\pm}(w) \alpha(z)^{-1}
&
=\exp\Big( -\sum_{n>0} \ad_{b_{-n}}\dfrac{z^{n}}{\gamma^{n}-\gamma^{-n}}
     \Big) x^{\pm}(w)
\\
\label{eq:alpha_x}
&
=\exp\Big( \pm\sum_{n>0} \dfrac{1}{n}
           \dfrac{(1-q^n)(1-t^{-n})(1-p^{-n})}{\gamma^{n}-\gamma^{-n}}
           \gamma^{-n/2\mp n/2} \big(\dfrac{z}{w}\big)^{n}
     \Big) x^{\pm}(w),
\\
\nonumber
 \beta(z) x^{\pm}(w) \beta(z)^{-1}
&
=\exp\Big( \sum_{n>0} \ad_{b_{n}}\dfrac{z^{-n}}{\gamma^{n}-\gamma^{-n}}
     \Big) x^{\pm}(w)
\\
\label{eq:beta_x}
&
=\exp\Big( \mp\sum_{n>0} \dfrac{1}{n}
           \dfrac{(1-q^n)(1-t^{-n})(1-p^{-n})}{\gamma^{n}-\gamma^{-n}}
           \gamma^{-n/2\mp n/2} \big(\dfrac{w}{z}\big)^{n}
     \Big) x^{\pm}(w).
\end{align}
We also prepare the operator product of $\alpha(w)$ and $\beta(z)$,
which is easily obtained from the definitions \eqref{eq:alpha_beta} 
and the commutation relations \eqref{eq:boson_b} of $b_n$'s:
\begin{align}
\label{eq:alpha_beta_ope}
 \beta(z)\alpha(w)
=\alpha(w)\beta(z)
 \exp\Big(\sum_{n>0}\dfrac{1}{n}
          \dfrac{(1-q^n)(1-t^{-n})(1-p^{-n})}{\gamma^n-\gamma^{-n}}
          \gamma^{-n}\big(\dfrac{w}{z}\big)^{n}
     \Big).
\end{align}

%%%%%%%%%%%%%%%%%%%%%%%%%%%%%%%%%%%%%%%%%%%%%%%%%%
\subsubsection{Proof of (1)}
Using \eqref{eq:alpha_beta_ope} and \eqref{eq:alpha_x},
we see that
\begin{align*}
&\alpha(z) t(w) \alpha(z)^{-1}
=\alpha(z) \alpha(w) x^+(w) \beta(w) \alpha(z)^{-1}
\\
&
=\alpha(w) \alpha(z) x^+(w) \alpha(z)^{-1} \beta(w)
 \times 
 \exp\Big(-\sum_{n>0}\dfrac{1}{n}
          \dfrac{(1-q^n)(1-t^{-n})(1-p^{-n})}{\gamma^n-\gamma^{-n}}
          \gamma^{-n}\big(\dfrac{z}{w}\big)^{n}
     \Big)
\\
&=\alpha(w) x^+(w) \beta(w)
 \times 
 \exp\Big(-\sum_{n>0}\dfrac{1}{n}
           \dfrac{(1-q^n)(1-t^{-n})(1-p^{-n})}{\gamma^n-\gamma^{-n}}
           \gamma^{-n}\big(\dfrac{z}{w}\big)^{n}
\\
&\phantom{=\alpha(w) x^+(w) \beta(w)  \times  \exp\Big(|}
          +\sum_{n>0} \dfrac{1}{n}
           \dfrac{(1-q^n)(1-t^{-n})(1-p^{-n})}{\gamma^{n}-\gamma^{-n}}
           \gamma^{-n} \big(\dfrac{z}{w}\big)^{n}
     \Big)
=t(w).
\end{align*}
Thus we have $[t(z),\alpha(w)]=0$.
The other relations $[t(z),\beta(w)]=0$, 
$[t^*(z),\alpha(w)]=[t^*(z),\beta(w)]=0$,
$[t(z),\psi^{\pm}(w)]=[t^*(z),\psi^{\pm}(w)]=0$ 
also follow from equations \eqref{eq:alpha_x}-\eqref{eq:alpha_beta_ope} 
and we omit the detail.

%%%%%%%%%%%%%%%%%%%%%%%%%%%%%%%%%%%%%%%%%%%%%%%%%%
\subsubsection{Proof of (2)}

Using the commutativity $[t(z),\alpha(w)]=0$ given in (1), we have
\begin{align*}
&A(w/z)t(z)t(w)
=A(w/z)\alpha(z)x^+(z)\beta(z)\alpha(w)x^+(w)\beta(w)
=\alpha(z)\alpha(w)x^+(z)x^+(w)\beta(z)\beta(w)
\\
&\times
\exp\Big(\sum_{n>0}\dfrac{1}{n}
         \dfrac{(1-q^n)(1-t^{-n})(1-p^{-n}\gamma^{-2n})}{1-\gamma^{-2n}}
         \big(\dfrac{w}{z}\big)^{n}
         -\sum_{n>0}\dfrac{1}{n}
          \dfrac{(1-q^n)(1-t^{-n})(1-p^{-n})}{\gamma^n-\gamma^{-n}}
          \gamma^{-n}\big(\dfrac{w}{z}\big)^{n}
    \Big).
\end{align*}
Here the first summation in the exponential comes from the $A(w/z)$,
and the second from transposition of $\beta(w) x^+(z)$ using \eqref{eq:beta_x}.
Thus we have
\begin{align*}
A(w/z)t(z)t(w)
&=\alpha(z)\alpha(w)x^+(z)x^+(w)\beta(z)\beta(w)
\times
\exp\Big(\sum_{n>0}\dfrac{1}{n} (1-q^n)(1-t^{-n}) \big(\dfrac{w}{z}\big)^{n}
    \Big)
\\
&=\dfrac{(1-q \tfrac{w}{z})(1-t^{-1}\tfrac{w}{z})}
        {(1-\tfrac{w}{z})(1-p \tfrac{w}{z})}
   \alpha(z)\alpha(w)x^+(z)x^+(w)\beta(z)\beta(w).
\end{align*}
Then
\begin{align}
\nonumber
&A(w/z)t(z)t(w)-A(z/w)t(w)t(z)
=\alpha(z)\alpha(w)
\\
\label{eq:tt:1}
&\phantom{=}\times
\Big[
 \dfrac{(1-q\tfrac{w}{z})(1-t^{-1}\tfrac{w}{z})}
       {(1-\tfrac{w}{z})(1-p \tfrac{w}{z})} x^+(z)x^+(w)
-\dfrac{(1-q\tfrac{z}{w})(1-t^{-1}\tfrac{z}{w})}
       {(1-\tfrac{z}{w})(1-p\tfrac{z}{w})} x^+(w)x^+(z)
\Big]
\\
\nonumber
&\phantom{=}\times
\beta(z)\beta(w).
\end{align}
Now recall the relation of $x^+(z)$ and $x^+(w)$ given 
in Definition \ref{dfn:uqt}:
\begin{align}\label{eq:Gxx}
-(\tfrac{z}{w})^3 G^+(\tfrac{z}{w})x^+(z)x^+(w)=G^+(\tfrac{z}{w})x^+(w)x^+(z).
\end{align}
Using this equation, the line \eqref{eq:tt:1} is rewritten into
\begin{align*}
\eqref{eq:tt:1}&=
\Big[
 \dfrac{1}{(1-\tfrac{w}{z})(1-p \tfrac{w}{z})(1-p^{-1} \tfrac{w}{z})}    
+\dfrac{(\tfrac{z}{w})^3}
       {(1-\tfrac{z}{w})(1-p\tfrac{z}{w})(1-p^{-1}\tfrac{z}{w})} 
\Big] G^+(\tfrac{w}{z})x^+(w)x^+(z)
\\
&=
\Big[
 \dfrac{\delta(\tfrac{w}{z})}{(1-p^{-1})(1-p)}    
+\dfrac{\delta(p\tfrac{w}{z})}{(1-p^{-1})(1-p^{-2})} 
+\dfrac{\delta(p^{-1}\tfrac{w}{z})}{(1-p)(1-p^2)} 
\Big] G^+(\tfrac{w}{z})x^+(w)x^+(z).
\end{align*}
Now from \eqref{eq:Gxx} and $G^+(1)\neq0$,
we see that $\delta(w/z)G^+(w/z)x^+(w)x^+(z)=0$.
We also find from \eqref{eq:Gxx} and  $G^+(p^{-1})=0$ that
$\delta(p \tfrac{w}{z})G^+(\tfrac{w}{z})x^+(w)x^+(z)
=\delta(p \tfrac{w}{z})G^+(p^{-1})x^+(p w)x^+(w)$.
Similarly from \eqref{eq:Gxx} and  $G^+(p)\neq0$ we have
$\delta(p^{-1} \tfrac{w}{z})G^+(\tfrac{w}{z})x^+(w)x^+(z)
=\delta(p^{-1} \tfrac{w}{z})G^+(p)x^+(p z)x^+(z)$.
Thus after a short calculation we have
\begin{align*}
\eqref{eq:tt:1}=
\dfrac{(1-t^{-1})(1-q)}{1-p}
\Big[
 \delta(p^{-1}\tfrac{w}{z}) x^+(p z)x^+(z)
-\delta(p     \tfrac{w}{z}) x^+(p w)x^+(w)
\Big].
\end{align*}
Then we have the desired consequence \eqref{eq:Att}.

The equation \eqref{eq:Att*} can be similarly shown, 
so that we omit the detail.

\subsubsection{Proof of (3)}
We apply the same method as in (2).
Recalling Remark \eqref{rmk:t*}, we calculate 
$B(\gamma p w/z)t(z)t^*(\gamma p w)
-B(\gamma^{-1}p^{-1}z/w)t^*(\gamma p w)t(z)$.
From the definition \eqref{eq:B} of $B(z)$, 
the commutativity $[t(z),\alpha(w)]=0$ given in (1) 
and the formula \eqref{eq:alpha_beta_ope}, we have
\begin{align*}
&B(\gamma p w/z)t(z)t^*(\gamma p w)
=B(\gamma p \tfrac{w}{z})
\alpha(z)x^+(z)\beta(z)
\alpha(\gamma w)^{-1}x^-(w)\beta(\gamma^{-1}w)^{-1}
\\
&=
\alpha(z)\alpha(\gamma w)^{-1} x^+(z) x^-(w) \beta(z)\beta(\gamma^{-1}w)^{-1}
\\
&\phantom{=}
\times
 \exp\Big(\sum_{n>0}\dfrac{1}{n}
          \dfrac{(1-q^n)(1-t^{-n})(1-p^{-n})}{\gamma^n-\gamma^{-n}}
          \big(\dfrac{w}{z}\big)^{n}
\\
&\phantom{= \times  \exp\Big(}
         +\sum_{n>0}\dfrac{1}{n}
          \dfrac{(1-q^n)(1-t^{-n})(\gamma^{-2n}p^{-2n}-\gamma^{-2n}p^{-n})}
                {1-\gamma^{-2n}}
          \gamma^n p^{-n} \big(\dfrac{w}{z}\big)^{n}\Big)
\\
&=
\alpha(z)\alpha(\gamma w)^{-1} x^+(z) x^-(w) \beta(z)\beta(\gamma^{-1}w)^{-1}.
\end{align*}
A similar calculation shows that
$B(\gamma p \tfrac{z}{w})t^*(\gamma p w) t(z)=
\alpha(z)\alpha(\gamma w)^{-1} x^-(w) x^+(z) \beta(z)\beta(\gamma^{-1}w)^{-1}
$.
Thus we have
\begin{align*}
& B(\gamma p \tfrac{w}{z})t(z)t^*(\gamma p w)
 -B(\gamma p \tfrac{z}{w})t^*(\gamma p w) t(z)
\\
&=
\alpha(z)\alpha(\gamma w)^{-1}
[ x^+(z) x^-(w)- x^-(w) x^+(z)]
\beta(z)\beta(\gamma^{-1}w)^{-1}
\end{align*}
Using the expression of $[x^+(z),x^-(w)]$ given in Definition \ref{dfn:uqt}, 
the expansion \eqref{eq:psi_boson} and the defnition \eqref{eq:alpha_beta}, 
one may immediately find that
\begin{align*}
  B(\gamma p \tfrac{w}{z})t(z)t^*(\gamma p w)
 -B(\gamma p \tfrac{z}{w})t^*(\gamma p w)t(z)
=\dfrac{(1-q)(1-t^{-1})}{1-p}
 \Big(\delta(\gamma^{-1}\tfrac{z}{w})\psi_0^+
-\delta(\gamma\tfrac{z}{w})\psi_0^-\Big).
\end{align*}
Replacing $w$ in the above equation with $\gamma^{-1}p^{-1} w$,
we have the desired equation \eqref{eq:Btt*}.

%%%%%%%%%%%%%%%%%%%%%%%%%%%%%%%%%%%%%%%%%%%%%%%%%%
\subsection{Proof of Proposition \ref{prp:Lambda}}
\label{subsec:prf:Lambda}

Let us define
$a_{n,(i)}\seteq 
1\otimes \cdots \otimes 1\otimes a_{n}\otimes 1\otimes \cdots 1$,
where $a_{n}$ sits in the $i$-th tensor component.
Then from \eqref{eq:Delta_boson} and \eqref{eq:rhom} one finds that
$\rho_y^{(m)}(b_{n})
=-\sum_{i=1}^m a_{n,(i)}(1-t^n)(1-p^{-|n|}) p^{(m-i+1)|n|/2}/|n|$.
Thus we have
\begin{align}
\label{eq:rho_alpha}
&\rho_y^{(m)}(\alpha(z))=
\prod_{i=1}^m \alpha_{(i)}^m(z),
\quad
\alpha_{(i)}^m(z) \seteq
\exp\Big(\sum_{n>0}\dfrac{1}{n}\dfrac{p^{(m-i+1)n/2}(1-t^{-n})(1-p^{-n})}
                                     {p^{-m n/2}-p^{m n/2}}a_{-n,(i)}z^n\Big),
\\
\label{eq:rho_beta}
&
\rho_y^{(m)}(\beta(z))=
\prod_{i=1}^m \beta_{(i)}^m(z),
\quad
\beta_{(i)}^m(z)\seteq
\exp\Big(-\sum_{n>0}\dfrac{1}{n}\dfrac{p^{(m-i+1)n/2}(1-t^n)(1-p^{-n})}
                                      {p^{-m n/2}-p^{m n/2}}a_{n,(i)}z^{-n}\Big).
\end{align}

%Then by Lemma \ref{lem:tildeLambda} and \eqref{eq:LL*},
%we find that the $k$-th tensor component of $\Lambda_i(z)$ is
%\begin{align}\label{eq:Lambda:comp}
%\begin{cases}
%\alpha_{(k)}^m(z) \varphi^-(p^{-(2k-1)/4}z) \beta_{(k)}^m(z) & k<i,
%\\
%\alpha_{(i)}^m(z) \eta(p^{-(i-1)/2}z) \beta_{(i)}^m(z) & k=i,
%\\
%\alpha_{(k)}^m(z) \beta_{(k)}^m(z) & k>i.
%\end{cases}
%\end{align}

%%%%%%%%%%%%%%%%%%%%%%%%%%%%%%%%%%%%%%%%%%%%%%%%%%
\subsubsection{Proof of (1)}
We calculate each tensor component of $\Lambda_i(z)\Lambda_j(w)$.
First assume $i=j$.

If $k>i$, then the $k$-th tensor component comes from 
$\alpha_{(k)}^m(z) \beta_{(k)}^m(z) \alpha_{(k)}^m(w)  \beta_{(k)}^m(z)$.
Under the normal ordering, the following coefficient arises.
\begin{align}
\label{eq:prf:L:1:i=j:k>i}
\exp\Big(-\sum_{n>0}\dfrac{1}{n}(1-q^n)(1-t^{-n})
         \big(\dfrac{1-p^{-n}}{1-p^{m n}}\big)^2 p^{(2m-k+1)n} 
         \big(\dfrac{w}{z}\big)^n\Big).
\end{align}
For $k=i$, the $i$-th tensor component comes from 
$\alpha_{(k)}^m(z)\eta(p^{-(i-1)/2}z) \beta_{(k)}^m(z) 
 \alpha_{(k)}^m(w)\eta(p^{-(i-1)/2}w) \beta_{(k)}^m(w)$.
Under the normal ordering, the following coefficient arises.
\begin{align}
\label{eq:prf:L:1:i=j:k=i}
\begin{split}
&\exp\Big(-\sum_{n>0}\dfrac{1}{n}(1-q^n)(1-t^{-n})\big(\dfrac{w}{z}\big)^n
\\
&\phantom{\exp\Big(-\sum_{n>0}}
         \big\{\big(\dfrac{1-p^{-n}}{1-p^{m n}}\big)^2 p^{(2m-i+1)n}
           +\dfrac{1-p^{-n}}{1-p^{m n}}p^{m n}
           +\dfrac{1-p^{-n}}{1-p^{m n}}p^{(m-i+1)n} 
           +1
         \big\}\Big).
\end{split}
\end{align}
If $k<i$, then the $k$-th tensor component is
$\alpha_{(k)}^m(z)\varphi^{-}(p^{-(2k-1)/4}z) \beta_{(k)}^m(z)
 \alpha_{(k)}^m(w)\varphi^{-}(p^{-(2k-1)/4}w)$ 
$\beta_{(k)}^m(w)$.
The normal ordering coefficient is
\begin{align}
\label{eq:prf:L:1:i=j:k<i}
&\exp\Big(-\sum_{n>0}\dfrac{1}{n}(1-q^n)(1-t^{-n})\big(\dfrac{w}{z}\big)^n
%\\
%&\phantom{\exp\Big(-\sum_{n>0}}
         \big\{\big(\dfrac{1-p^{-n}}{1-p^{m n}}\big)^2 p^{(2m-k+1)n}
              +\dfrac{(1-p^{-n})^2}{1-p^{m n}} p^{(m-k+1)n}
         \big\}\Big).
\end{align}
By simple calculations, 
the product of \eqref{eq:prf:L:1:i=j:k>i}, \eqref{eq:prf:L:1:i=j:k=i} and
\eqref{eq:prf:L:1:i=j:k<i} is shown to be $f_{1,m}(w/z)^{-1}$.
Thus the statement holds.

Next we consider the case $i<j$.
If $k<i$, then the normal order coefficient is the same as 
\eqref{eq:prf:L:1:i=j:k<i}.
For $k=i$, the normal order coefficient is 
\begin{align}
\label{eq:prf:L:1:i<j:k=i}
\begin{split}
&\exp\Big(-\sum_{n>0}\dfrac{1}{n}(1-q^n)(1-t^{-n})\big(\dfrac{w}{z}\big)^n
\\
&\phantom{\exp\Big(-\sum_{n>0}}
         \big\{\big(\dfrac{1-p^{-n}}{1-p^{m n}}\big)^2 p^{(2m-i+1)n}
           +\dfrac{1-p^{-n}}{1-p^{m n}}p^{m n}
           +\dfrac{(1-p^{-n})^2}{1-p^{m n}}p^{(m-i+1)n} 
           +1-p^{-n}
         \big\}\Big).
\end{split}
\end{align}
If $i<k<j$, then the normal order coefficient is
\begin{align}
\label{eq:prf:L:1:i<j:i<k<j}
&\exp\Big(-\sum_{n>0}\dfrac{1}{n}(1-q^n)(1-t^{-n})\big(\dfrac{w}{z}\big)^n
         \big\{\big(\dfrac{1-p^{-n}}{1-p^{m n}}\big)^2 p^{(2m-k+1)n}
           +\dfrac{(1-p^{-n})^2}{1-p^{m n}}p^{(m-k+1)n} 
         \big\}\Big).
\end{align}
If $k=j$, then the normal order coefficient is
\begin{align}
\label{eq:prf:L:1:i<j:k=j}
&\exp\Big(-\sum_{n>0}\dfrac{1}{n}(1-q^n)(1-t^{-n})\big(\dfrac{w}{z}\big)^n
         \big\{\big(\dfrac{1-p^{-n}}{1-p^{m n}}\big)^2 p^{(2m-j+1)n}
           +\dfrac{1-p^{-n}}{1-p^{m n}}p^{(m-j+1)n} 
         \big\}\Big).
\end{align}
If  $k>j$, then the normal order coefficient is 
\eqref{eq:prf:L:1:i=j:k>i}.
The product of \eqref{eq:prf:L:1:i=j:k<i}, \eqref{eq:prf:L:1:i<j:k=i},
\eqref{eq:prf:L:1:i<j:i<k<j}, \eqref{eq:prf:L:1:i<j:k=j}, 
\eqref{eq:prf:L:1:i=j:k>i} is equal to $f_{1,m}(w/z)^{-1}\gamma_{+}(z,w;q,p)$.
Thus we obtain the result.

The case $i>j$ is similar, so we omit the detail.

%%%%%%%%%%%%%%%%%%%%%%%%%%%%%%%%%%%%%%%%%%%%%%%%%%
\subsubsection{Proof of (2)}
The desired equation is equivalent to
\begin{align*}
\rho_{y}^{(m)}(\alpha(z)\cdots\alpha(p^{m-1}z))
:\prod_{k=1}^{m}\widetilde{\Lambda}_k(p^{k-1}z):
\rho_{y}^{(m)}(\beta(z)\cdots\beta(p^{m-1}z))=1.
\end{align*}
We will show this equation by comparing each tensor component.

By \eqref{eq:rho_alpha}, the $k$-th tensor component of 
$\rho_{y}^{(m)}(\alpha(z)\cdots\alpha(p^{m-2}z))$ is equal to
\begin{align}
\exp\Big(-\sum_{n>0}\dfrac{1}{n}(1-t^{-n})p^{(2m-k-1)n/2}a_{-n}z^n\Big).
\label{eq:LL:2:k:alpha}
\end{align}
Similarly, the $k$-th tensor component of 
$\rho_{y}^{(m)}(\beta(z)\cdots\beta(p^{m-1}z))$ is equal to
\begin{align}
\exp\Big(\sum_{n>0}\dfrac{1}{n}(1-t^{-n})p^{(-k+1)n/2}a_{n}z^{-n}\Big).
\label{eq:LL:2:k:beta}
\end{align}
The $k$-th tensor component of 
$:\prod_{k=1}^{m}\widetilde{\Lambda}_k(p^{k-1}z):$ is 
\begin{align}
\nonumber
&:\eta(p^{-(k-1)/2}p^{k-1}z)
  \varphi^-(p^{-(2k-1)/4}p^{k}z)
  \varphi^-(p^{-(2k-1)/4}p^{k+1}z)\cdots
  \varphi^-(p^{-(2k-1)/4}p^{m-1}z):
\\
\label{eq:LL:2:k:L}
&
=\exp\Big( \sum_{n>0} \dfrac{1-t^{-n}}{n}p^{n(2m-k-1)/2}a_{-n}z^{n} \Big)
 \exp\Big(-\sum_{n>0} \dfrac{1-t^{n}}{n} p^{-n(k-1)/2}a_{n}  z^{-n} \Big)
\end{align}
It is easy to see that \eqref{eq:LL:2:k:alpha},\eqref{eq:LL:2:k:beta} 
and \eqref{eq:LL:2:k:L} cancel. 
Thus we have the consequnce.

%%%%%%%%%%%%%%%%%%%%%%%%%%%%%%%%%%%%%%%%%%%%%%%%%%
\subsubsection{Proof of (3)}
The desired equation is equivalent to
\begin{align}
\begin{split}
\label{eq:LL:3}
&\rho_{y}^{(m)}(\alpha(p^{-1}z)\cdots\alpha(p^{m-2}z))
:\prod_{k=1}^{i-1}\widetilde{\Lambda}_k(p^{k-1}z)
 \prod_{l=i+1}^{m}\widetilde{\Lambda}_l(p^{l-2}z):
\rho_{y}^{(m)}(\beta(z)\cdots\beta(p^{m-1}z))
\\
&=\widetilde{\Lambda}_i^*(p^{(m-2)/2}z).
\end{split}
\end{align}
We will show this equation by comparing each tensor component.

As in \eqref{eq:LL:2:k:alpha} , the $k$-th tensor component of 
$\rho_{y}^{(m)}(\alpha(p^{-1}z)\cdots\alpha(p^{m-2}z))$ is equal to
\begin{align}
%\exp\Big(\sum_{n>0}\dfrac{1}{n}
%         \dfrac{p^{(m-k+1)n/2}(1-t^n)(1-p^{-n})}{p^{-m n/2}-p^{m n/2}}
%         \dfrac{1-p^{m n}}{1-p^n}p^{-n} a_{-n}z^n\Big)
%=
\exp\Big(-\sum_{n>0}\dfrac{1}{n}(1-t^n)p^{(2m-k-3)n/2}a_{-n}z^n\Big).
\label{eq:LL:3:k:alpha}
\end{align}
The $k$-th tensor component of 
$\rho_{y}^{(m)}(\beta(z)\cdots\beta(p^{m-1}z))$ is given by 
\eqref{eq:LL:2:k:beta}.

The $k$-th tensor component of 
$:\prod_{k=1}^{i-1}\widetilde{\Lambda}_k(p^{k-1}z)
 \prod_{l=i+1}^{m}\widetilde{\Lambda}_l(p^{l-2}z):$
depends on $k$.
If $k=i$, then by Lemma \ref{lem:tildeLambda} and some simple calculations,
the component turns out to be 
\begin{align}
\nonumber
&\varphi^-(p^{-(2i-1)/4}p^{i-1}z)\varphi^-(p^{-(2i-1)/4}p^{i}z)\cdots
 \varphi^-(p^{-(2i-1)/4}p^{m-2}z)\\
\label{eq:LL:3:L:i}
&=\exp\Big(-\sum_{n>0}\dfrac{1-t^{-n}}{n}p^{(i-3)/2}(1-p^{n(m-i)})
          a_{-n}z^n\Big).
\end{align}
Similarly, if $k<i$, then by Lemma \ref{lem:tildeLambda} the component is 
\begin{align}
\nonumber
&:\eta(p^{-(k-1)/2}p^{k-1}z)
  \varphi^-(p^{-(2k-1)/4}p^{k}z)
  \varphi^-(p^{-(2k-1)/4}p^{k+1}z)\cdots
  \varphi^-(p^{-(2k-1)/4}p^{m-2}z):
\\
\label{eq:LL:3:L:<i}
&
=\exp\Big( \sum_{n>0} \dfrac{1-t^{-n}}{n}p^{n(2m-k-3)/2}a_{-n}z^{n} \Big)
 \exp\Big(-\sum_{n>0} \dfrac{1-t^{n}}{n} p^{-n(k-1)/2}a_{n}  z^{-n} \Big):
\end{align}
If $k>i$, then by Lemma \ref{lem:tildeLambda} the component is 
\begin{align}
\nonumber
&:\eta(p^{-(k-1)/2}p^{k-2}z)
  \varphi^-(p^{-(2k-1)/4}p^{k-1}z)
  \varphi^-(p^{-(2k-1)/4}p^{k}z)\cdots
  \varphi^-(p^{-(2k-1)/4}p^{m-2}z):
\\
\label{eq:LL:3:L:>i}
&
=\exp\Big( \sum_{n>0} \dfrac{1-t^{-n}}{n}p^{n(2m-k-3)/2}a_{-n}z^{n} \Big)
 \exp\Big(-\sum_{n>0} \dfrac{1-t^{n}}{n} p^{-n(k-3)/2}a_{n}  z^{-n} \Big):
\end{align}

Then the $i$-th tensor component of \eqref{eq:LL:3} is the product of 
\eqref{eq:LL:3:k:alpha},\eqref{eq:LL:2:k:beta} and \eqref{eq:LL:3:L:i}.
After a short calculation, one finds that it is $\xi(p^{(i-2)/2}z)$,
which is the $i$-th component of $\widetilde{\Lambda}_i^*(p^{(m-2)/2}z)$.

If $k<i$, then the $k$-th tensor component of \eqref{eq:LL:3} is the product 
of \eqref{eq:LL:3:k:alpha},\eqref{eq:LL:2:k:beta} and \eqref{eq:LL:3:L:<i}.
It is $1$, that is, 
the $k$-th component of $\widetilde{\Lambda}_i^*(p^{(m-2)/2}z)$.

Finally, for $k>i$, the $k$-th tensor component of \eqref{eq:LL:3} is 
the product of \eqref{eq:LL:3:k:alpha},\eqref{eq:LL:2:k:beta} 
and \eqref{eq:LL:3:L:>i}.
It turns out to be $\varphi^-(p^{(2j-5)/4}z)$,
which is the $k$-th component of $\widetilde{\Lambda}_i^*(p^{(m-2)/2}z)$.

%%%%%%%%%%%%%%%%%%%%%%%%%%%%%%%%%%%%%%%%%%%%%%%%%%
\subsubsection{Proof of (4)}
From the known identities \eqref{eq:prp:Lambda:1} and \eqref{eq:prp:Lambda:3},
it is not difficult to calculate 
$\Big[\prod_{k,l=1}^{m-1}f_{1.m}(p^{-k+l}w/z)\Big] 
 \Lambda_i^*(z)\Lambda_j^*(w)$ in terms of $\Lambda_k$'s.

First we consider the case $i=j$.
From the operator product \eqref{eq:prp:Lambda:1}, we have
\begin{align*}
&\Big[\prod_{k,l=1}^{m-1}f_{1,m}(p^{-k+l}\tfrac{w}{z})\Big] 
 \Lambda_i^*(z)\Lambda_i^*(w)
\\
&=\Big[\prod_{k=1}^{m-2}\prod_{l=k+1}^{m-1} \gamma_+(p^{-k+l}\tfrac{w}{z})\Big]
  \Big[\prod_{k=2}^{m-1}\prod_{l=1}^{k-1}   \gamma_-(p^{-k+l}\tfrac{w}{z})\Big]
  :\Lambda_i^*(z)\Lambda_i^*(w):
\\
%&=\exp\Big(\sum_{n>0}\dfrac{1}{n}(1-q^n)(1-t^{-n})
%           \{p^{-n}\sum_{k=1}^{m-2}\sum_{l=k+1}^{m-1}p^{(-k+l)n}
%            +\sum_{k=2}^{m-1}\sum_{l=1}^{k-1}p^{(-k+l)n}\}
%           \big(\dfrac{w}{z}\big)^n
%      \Big)
%:\Lambda_i^*(z)\Lambda_i^*(w):
%\\
%&=\exp\Big(\sum_{n>0}\dfrac{1}{n}(1-q^n)(1-t^{-n})
%           \sum_{k=2}^{m-1}\sum_{l=1}^{m-1}p^{(-k+l)n} 
%          \big(\dfrac{w}{z}\big)^n \Big)
%:\Lambda_i^*(z)\Lambda_i^*(w):
%\\
&=\exp\Big(\sum_{n>0}\dfrac{1}{n}(1-q^n)(1-t^{-n})
           \dfrac{1-p^{-n(m-2)}}{1-p^{-n}} \dfrac{1-p^{n(m-1)}}{1-p^{n}} 
           \big(\dfrac{w}{z}\big)^n \Big)
:\Lambda_i^*(z)\Lambda_i^*(w):.
\end{align*}
Here we used the abbreviation 
$\gamma_\pm(\tfrac{w}{z})\seteq\gamma_\pm(z,w;q,p)$.
Then we have
\begin{align*}
&\Big[\prod_{k,l=1}^{m-1}f_{1,m}(p^{-k+l}\tfrac{w}{z})\Big] 
\times
\exp\Big(-\sum_{n>0}
           \dfrac{1}{n}(1-q^n)(1-t^{-n})
           \dfrac{1-p^{-n(m-2)}}{1-p^{-n}}
           \dfrac{1-p^{n(m-1)}}{1-p^{n}} 
           \big(\dfrac{w}{z}\big)^n \Big)
\\
%&=
%\exp\Big( \sum_{n>0}
%           \dfrac{1}{n}
%           \dfrac{(1-q^n)(1-t^{-n})(1-p^{(m-1)n})}{1-p^{m n}}
%           \sum_{k,l=1}^{m-1}p^{(-k+l)n}
%           \big(\dfrac{w}{z}\big)^n
%\\
%&\phantom{=\exp\Big(}
%         -\sum_{n>0}\dfrac{1}{n}(1-q^n)(1-t^{-n})
%           \dfrac{1-p^{-n(m-2)}}{1-p^{-n}}
%            \dfrac{1-p^{n(m-1)}}{1-p^{n}} 
%           \big(\dfrac{w}{z}\big)^n \Big)
&=\exp\Big(-\sum_{n>0}
           \dfrac{1}{n}
           \dfrac{(1-q^n)(1-t^{-n})(1-p^{(m-1)n}}{1-p^{m n}}
           \big(\dfrac{w}{z}\big)^n \Big)
=f_{1,m}(\tfrac{w}{z}).
\end{align*}
Thus the desired equation 
$f_{1,m}(\tfrac{w}{z})\Lambda_i^*(z)\Lambda_i^*(w)
=:\Lambda_i^*(z)\Lambda_i^*(w):$
is proved.

Next, note that the calculation of the case $i\neq j$ reduces to that of $k=i$.
If $i<j$, then
\begin{align*}
f_{1,m}(\tfrac{w}{z})\Lambda_i^*(z)\Lambda_j^*(w)
=:\Lambda_i^*(z)\Lambda_j^*(w): 
 \dfrac{\gamma_-(\tfrac{w}{z})^{j-i}}{\gamma_+(p\tfrac{w}{z})^{j-i-1}}
=:\Lambda_i^*(z)\Lambda_j^*(w): \gamma_-(\tfrac{w}{z}).
\end{align*}
At the last line we used the formula $\gamma_-(z)/\gamma_+(p z)=1$.
For the final case $i>j$, we have
\begin{align*}
f_{1,m}(\tfrac{w}{z})\Lambda_i^*(z)\Lambda_j^*(w)
=:\Lambda_i^*(z)\Lambda_j^*(w): 
 \dfrac{\gamma_+(\tfrac{w}{z})^{i-j}}{\gamma_-(p^{-1}\tfrac{w}{z})^{i-j-1}}
=:\Lambda_i^*(z)\Lambda_j^*(w): \gamma_+(\tfrac{w}{z}).
\end{align*}
Thus all the cases are proved.

%%%%%%%%%%%%%%%%%%%%%%%%%%%%%%%%%%%%%%%%%%%%%%%%%%
\subsubsection{Proof of (5)}
This is similary shown as (2) and (3), so we omit the detail.

%%%%%%%%%%%%%%%%%%%%%%%%%%%%%%%%%%%%%%%%%%%%%%%%%%
%%%%%%%%%%%%%%%%%%%%%%%%%%%%%%%%%%%%%%%%%%%%%%%%%%
\begin{ack}
S.Y. is supported by JSPS Fellowships for Young Scientists (No.21-2241).
\end{ack}

%%%%%%%%%%%%%%%%%%%%%%%%%%%%%%%%%%%%%%%%%%%%%%%%%%
%%%%%%%%%%%%%%%%%%%%%%%%%%%%%%%%%%%%%%%%%%%%%%%%%%

\end{document}